\documentclass[11pt]{article}

\usepackage{calc}
\usepackage{amssymb,amsmath,amsfonts,amsthm}
\usepackage{latexsym}
\usepackage{graphics}
\usepackage{indentfirst}
\usepackage{hyperref}
\usepackage{comment}
\usepackage[shortlabels]{enumitem}
\usepackage[english]{babel}
\usepackage{tikz}
\usepackage{esdiff}
\usepackage[english]{babel}
\usepackage{amsthm}
\usepackage{amssymb}
\usepackage[utf8]{inputenc}
\usepackage[english]{babel}
\usepackage{amsmath}
\usepackage{times}
\usepackage{graphicx}
\usepackage{float}
\usepackage[margin=1in]{geometry}
\usepackage{blindtext}
\usepackage{amssymb}
\usepackage{amsthm}
\usepackage{mathtools}
\usepackage{textcmds}
\usepackage{comment}

\usepackage{comment}
\usepackage[mathlines]{lineno}
\usepackage[utf8]{inputenc}
\usepackage{standalone}
\usepackage{thmtools} 

\setlength{\textwidth}{15.5cm} \setlength{\headheight}{0.5cm} \setlength{\textheight}{21.5cm}
\setlength{\oddsidemargin}{0.25cm} \setlength{\evensidemargin}{0.25cm} \setlength{\topskip}{0.25cm}
\setlength{\footskip}{1.5cm} \setlength{\headsep}{0cm} \setlength{\topmargin}{0.5cm}
\usepackage{fancyhdr}

\newenvironment{customfact}[1]
{\internalcustomfact}
{\endinternalcustomfact}

\newtheorem*{thm*}{Theorem}
\newtheorem{thm}{Theorem}
\newtheorem{lem}[thm]{Lemma}

\newtheorem{obs}[thm]{Observation}
\newtheorem{cor}[thm]{Corollary}
\newtheorem{conj}[thm]{Conjecture}
\newtheorem{cl}{Claim}[thm]

\newtheorem*{definition*}{Definition}
\newtheorem{definition}{Definition}
\newtheorem{hypothesis}[thm]{Hypothesis}
\newtheorem{fact}[thm]{Fact}
\theoremstyle{remark}
\newtheorem{remark}{Remark}[section]
\newtheorem{example}{Example}

\newcommand{\x}{\mathrm{x}}

\newcommand{\ex}{\mathrm{ex}}
\newcommand{\EX}{\mathrm{EX}}
\newcommand{\spex}{\mathrm{spex}}
\newcommand{\SPEX}{\mathrm{SPEX}}
\newcommand{\supnorm}[1]{\left\lVert{#1}\right\rVert_\infty}

\newcommand{\tmld}{\mathcal{T}_{m,l+1}^{\delta}}

\newcommand{\dmoore}{\left((\delta - 1)^2 + 1\right)}

\newcommand{\comments}[1]{}

\allowdisplaybreaks

\makeatletter
\newcommand{\specificthanks}[1]{\@fnsymbol{#1}}
\makeatother

\begin{document}
	
	\title{A Spectral Tur\'an Problem for a Fixed Tree}
	
	\author{Dheer Noal Desai\thanks{Department of Mathematical Sciences, The University of Memphis, Memphis, TN 38152. E-mail: {\tt dndesai@memphis.edu}} \and Hemanshu Kaul \thanks{Department of Applied Mathematics, Illinois Institute of Technology, Chicago, IL 60616. E-mail: {\tt kaul@iit.edu}} \and Bahareh Kudarzi\thanks{Department of Applied Mathematics, Illinois Institute of Technology, Chicago, IL 60616. E-mail: {\tt bkudarzi@hawk.iit.edu}} }

	\maketitle
	\begin{abstract}
		We study the spectral Tur\' an problem for trees. To avoid limiting our perspective to specific families of trees, we parametrize trees in terms of their unique bipartition. We say $T \in \tmld$ if $T$ is a tree of order $m$, where the order of the smaller partite set $A$ of $T$ is $l+1$, and $\delta$ is the minimum degree of the vertices in $A$. The motivation for this parametrization comes from the recent proof of the spectral Erd\H{o}s-S\' os conjecture. For a given fixed tree $T$, we describe $\SPEX(n,T)$ and consequently, bound $\spex(n,T)$ in terms of $m,l,\delta$ for that tree. Our approach combines spectral arguments with new results and constructions on embedding a tree $T \in \tmld$ into graphs of the form  $\overline{K}_l  \vee m K_{1,\delta-1}$, that might be of independent interest.  We give an almost complete description of the degrees of vertices in graphs from $\SPEX(n, T)$ and consequent bounds on $\spex(n,T)$ within an error of $\Theta(n^{-1/2})$ and $\Theta(n^{-1})$ that are based on our embedding results for the given $T$.
		Our results make progress on determining $\spex(n,T)$ for $\delta \ge 2$ as asked by Fang, Lin, Shu and Zhang (EJC, 2024).

		\medskip
		\noindent \textbf{Keywords.}  Spectral Tur\'an problem,  Tur\'an problem, Trees, Spectral radius, Brualdi-Solheid problem, Tree embedding
		
		\noindent \textbf{Mathematics Subject Classification.} 05C50, 05C35, 05C05
	\end{abstract}

	\section{Introduction}\label{intro}

	A fundamental problem in extremal graph theory is to determine the Tur\' an number $\mathrm{ex}(n,F)$, the largest number of edges that an $n$-vertex graph can have without containing a subgraph isomorphic to $F$. The Erdős-Stone-Simonovits theorem \cite{ES66,ES46} offers an asymptotic formula for $\mathrm{ex}(n, F)$ when $F$ has a chromatic number of at least 3. However determining the Turán number's order of magnitude for most bipartite graphs remains a challenging problem.
	
	Within the realm of bipartite graphs, the Tur\' an number of trees can be determined within a factor of 2. Consider a tree $T$ with $k+1$ vertices. By using disjoint copies of $K_k$, we can establish that $\mathrm{ex}(n, T) \geq n\frac{(k-1)}{2}$. Conversely, a standard result in graph theory states that $\mathrm{ex}(n, T)$ is bounded above by $n(k - 1)$, that is every $n$-vertex graph with average degree more than $2(k-1)$ contains all $(k+1)$-vertex trees. In 1965, Erdős and Sós~\cite{erdos1965} conjectured that we can do better: every $n$-vertex graph with average degree more than $k-1$ contains all $(k+1)$-vertex trees. This conjecture has attracted a lot of attention in the past sixty years (e.g. see \cite{GZ, M, SW, STY, Tiner, wozniak} and \cite[pp. 224-227]{degeneratesurvey})), but it remains open in general. 
	
	In 2010, Nikiforov~\cite{Nikiforovpaths} proposed studying the spectral version of the Tur\' an problem and the Erdős and Sós conjecture. This spectral perspective falls under the setting of the Brualdi-Solheid problem \cite{bs}. For a given graph $G$, we define $\lambda=\lambda(G)$ as the \emph{spectral radius} (largest eigenvalue) of its adjacency matrix. The \emph{Brualdi-Solheid problem} asks for the maximum (or minimum) $\lambda$ within a fixed family of graphs. Over the years,  Brualdi-Solheid type problems have been widely studied and many results have been published, such as in \cite{BZ, BLL, EZ, FN, nosal1970eigenvalues, S, SAH}. Nikiforov suggested a systematic study of the Tur\' an-type variant of the Brualdi-Solheid problem: determine $\mathrm{spex}(n, F)$, that is, the maximum spectral radius within the family of $n$-vertex $F$-free graphs. Nikiforov was particularly motivated by the case where $F$ is a path and more generally the spectral version of the Erdős and Sós conjecture. 
	Much work has been done in the past 15 years on the $\mathrm{spex}(n, F)$ and $\mathrm{SPEX}(n, F)$ (the set of the $n$-vertex graphs that do not contain any copies of graph $F$ and have spectral radius equal to $\mathrm{spex}(n, F)$ for various fixed graphs $F$ as well as the variant where a family of graphs is forbidden. E.g. see \cite{byrne2024general, cioabua2022spectral, dheer, FG, evencycles, desai2024spectral, SpectralIntersectingCliques,li2024spectralEFR, li2024spectralLS, li2025spectralsupersaturation,  Yongtao21, LLT, Nikiforov07, NO, SSbook, Wilf86, YWZ, ZW, ZWF}.
	
	In addition to being interesting in its own right, studying $\mathrm{spex}(n, F)$ holds significance because the spectral radius of a graph serves as an upper bound for its average degree. Consequently, any upper bound established for $\mathrm{spex}(n, F)$ also provides an upper bound for $\mathrm{ex}(n, F)$, as depicted by the inequality:
	$$\mathrm{ex}(n, F) \leq \frac{n}{2} \cdot \mathrm{spex}(n, F).$$
	
	This inequality enhances classical results in extremal graph theory. For instance, Nikiforov's findings about $\mathrm{spex}(n, K_{r+1})$ encapsulate Turán's theorem \cite{Nikiforov07}.

	\subsection{Spectral extremal problems for trees and our motivation}
	
	Our focus is to better understand $\mathrm{spex}(n, T)$ when $T$ is a tree. Since $\lfloor \frac{n}{q}\rfloor$ disjoint copies of $K_q$ and $K_{n-\lfloor \frac{n}{q}\rfloor q}$ do not contain any tree on $q+1$ vertices and the spectral radius of this graph is $q-1$, it is evident that $\mathrm{spex}(n, T) \geq q - 1$ for a tree $T$ with $q + 1$ vertices.  Can we give better bounds when we know more about the structure of the tree $T$? Understanding $\mathrm{spex}(n, T)$ and $\mathrm{SPEX}(n, T)$ in terms of $T$ remains the overall motivation, and much progress has been made in the past 14 years.
	
	In 2010, Nikiforov~\cite{Nikiforovpaths} considered this problem systematically when $T$ is a path and made some related conjectures. For $n > k$, consider the graph $S_{n,k} = K_k \vee \overline{K}_{n-k}$~\footnote{For any graphs $G_1=(V_1,E_1)$ and $G_2=(V_2,E_2)$ with $V_1 \cap V_2 = \emptyset$, we define the join graph of $G_1$ and $G_2$ to be the graph denoted by $G_1\vee G_2=(V,E)$  where $V=V_1\cup V_2$ and $E=E_1\cup E_2 \cup \{uv: u\in V_1 \text{ and } v \in V_2 \}$.}, and let $S^+_{n,k}$ be the graph obtained from $S_{n,k}$ by adding one edge. 
	In \cite{Balister2008} the authors showed that if $n$ is sufficiently large, then among all connected graphs on $n$-vertices, $S_{n, k}$ and $S^+_{n, k}$ have the maximum number of edges while having no subgraph isomorphic to the path on $2k+2$ vertices and $2k+3$ vertices, respectively.
	Nikiforov further demonstrated that for $n$ sufficiently large, $S_{n,k}$ and $S^+_{n,k}$ have the largest spectral radius among all $n$-vertex graphs prohibiting paths with $2k + 2$ and $2k + 3$ vertices, respectively.
	 Additionally, he conjectured that these graphs represent spectral extremal cases for all trees with $2k + 2$ or $2k + 3$ vertices \cite{Nikiforovpaths}, leading to the spectral version of the Erdős-Sós conjecture. Partial results towards this conjecture for trees of diameter at most 4 were proven in \cite{hou,LBW2,LBW}.
	
	The Spectral Erdős-Sós conjecture has recently been proved by Cioab\u a, the first named author of this paper, and Tait in~\cite{dheer}.
	
	\begin{thm}
		\label{spectral erdos sos}
		Let $k \geq 2$ and $G$ be an $n$-vertex graph with sufficiently large $n$.
		\begin{enumerate}[(a)]
			\item If $\lambda(G) \geq \lambda(S_{n,k})$, then $G$ contains all trees of order $2k + 2$ unless $G = S_{n,k}$.
			\item If $\lambda(G) \geq \lambda(S^+_{n,k})$, then $G$ contains all trees of order $2k + 3$ unless $G = S^+_{n,k}$.
		\end{enumerate}
	\end{thm}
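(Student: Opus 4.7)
The plan is to argue by contradiction: assume $G$ is an $n$-vertex graph with $\lambda(G) \geq \lambda(S_{n,k})$ that avoids some tree $T$ on $2k+2$ vertices, and derive $G = S_{n,k}$ (part (b) proceeds analogously with $S^+_{n,k}$ and trees on $2k+3$ vertices). First I would compute the asymptotics $\lambda(S_{n,k}) = (1+o(1))\sqrt{kn}$, so $\lambda(G)$ is forced to be very large. A crucial preliminary observation is that $S_{n,k} = K_k \vee \overline{K}_{n-k}$ contains every tree whose smaller bipartition class has order at most $k$; hence the only trees $T$ that can witness the theorem are those with bipartition exactly $(k+1, k+1)$, which is the hard case the whole argument must handle.

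The main structural step uses the Perron eigenvector $\mathbf{x}$ of $G$, normalized so $\max_v x_v = 1$. Let $z$ attain the maximum. The eigenvalue equation $\lambda(G) x_z = \sum_{u \sim z} x_u$ immediately yields $d(z) \geq \lambda(G) \geq (1-o(1))\sqrt{kn}$. I would then iterate this using standard Rayleigh-quotient arguments to show that there exist vertices $v_1, \ldots, v_k$ each of eigenvector weight bounded away from $0$ and of degree at least $n - C$ for some constant $C = C(k)$; call this set $L$. A complementary argument would force almost every vertex in $V(G) \setminus L$ to have degree at most $C$, since otherwise a high-degree vertex outside $L$ together with the enormous common neighborhood of $L$ could be used as a branching point to greedily embed $T$. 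After these reductions, $G$ looks essentially like $K_k \vee H$ for some sparse graph $H$ on the remaining $n - k$ vertices, possibly with a few missing edges between $L$ and $V(G) \setminus L$.

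Given that $G \neq S_{n,k}$, either $H$ contains at least one edge, or there is a missing edge between $L$ and $V(G) \setminus L$. In the latter case, a Perron-eigenvector edge-switching trick (delete an edge incident to a low-eigenvector-weight vertex, add an edge from a high-weight vertex in $L$) strictly increases $\lambda$ while remaining $T$-free, contradicting the extremal choice of $G$. In the former case, suppose $uv$ is an edge inside $V(G) \setminus L$. I would embed the balanced tree $T$ by rooting it at a well-chosen central edge $ab$, mapping $a \mapsto u$ and $b \mapsto v$, and then greedily placing the remaining $2k$ vertices of $T$ alternately into $L$ and the common neighborhood of $L$ outside $\{u, v\}$, which has size close to $n - k - 2$. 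Since every vertex of $L$ has degree close to $n - 1$, this greedy extension succeeds provided the bipartition sides of $T$ are matched correctly to $L$ versus its complement.

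The main obstacle, as usual for spectral Erd\H{o}s-S\'os problems, lies precisely in this final embedding step: balanced $(k+1, k+1)$-bipartite trees on $2k+2$ vertices come in many shapes (paths, caterpillars, double brooms, spiders, and more), and a single embedding argument must accommodate them all. The key will be to root $T$ at a carefully chosen edge whose removal splits $T$ into two subtrees of controlled sizes with both bipartition sides represented on each piece, so that one of the subtrees can be routed through the edge $uv$ while the other is absorbed into $L$ and its neighborhood. The hypothesis that $n$ is sufficiently large is what allows this greedy embedding to avoid the small forbidden vertex set at each step, and this is where the technical heart of the proof will reside; part (b) will then follow by a parallel argument working with $S^+_{n,k}$ and trees of order $2k+3$, where the extra edge of $S^+_{n,k}$ plays the role of $uv$ in the embedding.
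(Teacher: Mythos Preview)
This theorem is not proved in the present paper: it is quoted from \cite{dheer} (Cioab\u{a}, Desai, Tait), where the full argument appears. The paper you are reading only \emph{uses} the result and later develops a parallel structural toolkit (Lemmas in Section~\ref{sec: some structural results}) for its own theorems, so there is no in-paper proof to compare your sketch against.

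That said, your outline is broadly in the spirit of the argument in \cite{dheer} and of the structural machinery this paper builds for Theorem~\ref{thm: T - SPEX contains k_l,n-l}. Two points are worth flagging. First, your edge-switching step appeals to ``the extremal choice of $G$,'' but the theorem is stated for \emph{every} $G$ with $\lambda(G)\ge\lambda(S_{n,k})$, not just extremal ones; you need the preliminary reduction that it suffices to treat $G\in\SPEX(n,T)$, since any $T$-free $G$ with $\lambda(G)\ge\lambda(S_{n,k})$ forces $\spex(n,T)\ge\lambda(S_{n,k})$, and the extremal graph inherits the hypothesis. Second, your remark that ``the only trees $T$ that can witness the theorem are those with bipartition exactly $(k+1,k+1)$'' is not a reduction that comes for free: for trees with smaller part of size $a\le k$ you must still prove $\spex(n,T)<\lambda(S_{n,k})$, which follows once you have the structure $G\approx K_k\vee H$ (embed the small side into $L$), but deserves a sentence. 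The genuine difficulty, as you correctly identify, is the uniform embedding of an arbitrary balanced tree using a single stray edge in $H$; that step is where \cite{dheer} does the real work.
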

	
	With the solution of this conjecture, the question arises how we should take the next step towards understanding our motivating problem of determining $\mathrm{spex}(n, T)$ and $\mathrm{SPEX}(n, T)$ in terms of $T$. The proof and discussion in~\cite{dheer} gives us some important motivation towards how to further parameterize the family of all trees beyond the number of vertices.
	
	The concluding section of~\cite{dheer} highlights an important difference between the classical edge setting and the spectral setting if it is the case that the Erd\H{o}s-S\' os conjecture is true. If we assume the Erd\H{o}s-S\' os conjecture to hold true and $q$ divides $n$, then every fixed tree on $q+1$ vertices has the same extremal graph, namely the graph on $n$ vertices consisting of disjoint copies of $K_q$. This is not true in the spectral version. 
	For the spectral Erd\H{o}s-S\' os problem the extremal graph is $S_{n, k}$ (or $S_{n, k}^+$), and by Nikiforov \cite{Nikiforovpaths} it is also a spectral extremal graph for paths $P_{q+1}$, however $S_{n, k}$ contains several other trees on $q+1$ vertices, for example the star $K_{1, q}$. So further careful study is needed to completely understand the spectral Tur\' an number and spectral extremal graphs for other fixed trees. This type of problem is usually pursued by restricting attention to trees of a specific subfamily so that their structure can be exploited. However, we do not wish to do so. Instead we want to consider any tree but delineate certain parameters that capture the spectral extremality of any fixed tree. What parameters should we consider?
	
	A careful reading of~\cite{dheer} reveals that the structure of the spectral extremal graph is controlled by the structure of the path $P_{q+1}$. This control is two-fold. 
	Consider the case $q+1 = 2l+3$. First,  the size of the smaller partite set of $P_{q+1}$ is $l+1$ which shows up in the parametrization for $S_{n,l}^+$. 
	Second, the minimum degree $\delta$ among vertices of the smaller partite set of $P_{q+1}$ is $2$. If we consider the graph induced by the $n-l$ vertices of $S_{n,l}^+$ that are not in the clique $K_l$,
	then the maximum degree of this induced graph on $n-l$ vertices is $\delta - 1 = 2-1 = 1$, and most vertices in this induced graph have degree equal to $\delta -2$. 
	For $q+1 = 2l+2$  the size of the smaller partite set of $P_{q+1}$ is also $l+1$.  
	The minimum degree $\delta$ among vertices of the smaller partite set of $P_{q+1}$ is $1$. If we consider the graph induced by the $n-l$ vertices of $S_{n,l}$ that are not in the clique $K_l$, 
	then the induced graph on $n-l$ vertices has degree $\delta - 1 = 1-1 = 0$.
	These two observations, for odd and even paths, suggest that we should consider the order of the smaller partite set and its minimum degree $\delta$ as parameters in addition to the order of the tree.  
	
	In this paper, we will study the spectral extremal problem for trees with given order $m$ where the smaller partite set has $l+1$ vertices and the minimum degree of vertices in the smaller partite set is $\delta$. Our results will show that this parametrization of all trees gives a meaningful perspective that sheds light on the spectral extremality for any fixed tree without restricting ourselves to any specific subfamily of trees.

	\subsection{A parametrization of trees and our problem}
	
	We start with the definition of the families of trees that will be our focus in the study of the spectral extremal problem. As discussed above, we will parameterize these families in terms of the order $m$ of the tree, the order $l+1$ and the minimum degree $\delta$ of the smaller partite set of the tree.
	
	For a fixed natural number $m$, let $\mathcal{T}_m$ denote the set of all trees on $m$ vertices. Note that any tree $T \in \mathcal{T}_m$ is a bipartite graph. Let $l+1 \le m/2$ be some natural number and let 
	\[\mathcal{T}_{m,l+1} = \{T \in \mathcal{T}_m \text{ such that the smaller partite set of $T$ has } l+1 \text{ vertices}\}.\]
	
	For a fixed tree $T$ with bipartition $\{A,B\}$ where $|A| \le |B|$, let $\delta_T = \min\{d_T(v): v \in A\}$. Note that if $|A| = |B|$ then $\delta_T = 1$. Also, let
	$$\mathcal{T}_{m,l+1}^{\delta} = \{T \in \mathcal{T}_{m,l+1}\textrm{ such that } \delta_T = \delta \}.$$

	\begin{remark} \label{rem: tmld}
		Given any tree $T$, since $T$ has a unique bipartition, there are unique values of $m,l,\delta$ such that $T \in \tmld$. In this paper, we use $T \in \tmld$ as a convenient shorthand for the previous sentence.
		However, if we start from the perspective of the family $\tmld$, for $\tmld$ to be nonempty, we need to assume that $m\geq \max\{(l+1)\delta+1,2l+2\}$. For simplicity, we will not restate this condition every time we consider these families of trees. Also see Remark~\ref{rem: nonempty family}.
	\end{remark}

	\begin{remark} \label{rem: t}
		An important parameter that shows up when determining the spectral extremal graphs for trees $T \in \tmld$ is $t := m-1 - (l+1)\delta$. Each tree in $\mathcal{T}_{m,l+1}^{\delta}$ has a total of $m-1$ edges. The $(l+1)\delta$ of these edges are specified by the fact that each vertex in the smaller partite set (of size $l+1$) has degree at least $\delta$. Let $t_v:= d_T(v)-\delta$ for each vertex $v$ in the smaller partite set of $T$. We can think of $t_v$ as the ``excess" degree of the vertex $v$ beyond $\delta$. 
		Then the total number of ``excess'' edges are given by $\sum_{v}t_v=m-1-(l+1)\delta$, the defined value of $t$. Note that, for odd paths $P_{2l+3}$, we have $\delta=2$ and $t = 0$, and for even paths $P_{2l+2}$, we have $\delta =1$ and $t = l$, for all $l\in \mathbb{N}$. 
		
	\end{remark}

	In Section~\ref{sec: ex for embedding} we will discuss many examples and constructions of trees that are described in terms of the prescribed parameters $m,l,\delta$, and $t$.
	
	Before we can state our problem, we will formally define the spectral extremal number and graphs for a given family of forbidden graphs.

	For a family of graphs $\mathcal{H}$, let $\spex(n, \mathcal{H})$ denote \[\max\{\lambda(G) : |V(G)| = n,\; H \not\subseteq G \text{ for all } H \in \mathcal{H}\}.\]   
	This is called the \emph{spectral extremal number} of $\mathcal{H}$.
	
	Let $\SPEX(n, \mathcal{H})$ be the set of \emph{spectral extremal graphs} of $\mathcal{H}$, defined as \[\{G : |V(G)| = n, H \not\subseteq G \text{ for all } H \in \mathcal{H},  \text{ and } \lambda(G) = \spex(n, \mathcal{H})\}.\] 
	If $\mathcal{H} = \{H\}$, then we write $\spex(n, H)$ and $\SPEX(n, H)$ for simplicity.

	In this paper we will study the problem of determining spectral extremal number and graphs for any fixed tree $T$ in $\tmld$ for various values of $m, l$ and $\delta$. That is, our results will be on the following: $\spex(n, T)$  and $\SPEX(n, T)$ for a fixed $T$ in $\tmld$. In the next subsection, we state our results.\\
	
	\noindent \textbf{Related Work.} At the time of finalizing this paper it was brought to our attention that the paper~\cite{fang2024spectral} has also considered the parametrization with respect to $l$ and $\delta$.
	Two of their results overlap with ours and we will discuss these in the next section. However, most of our paper is focused on results beyond these. 
	
	In~\cite{fang2024spectral}, the authors also consider the parameter $\beta(T)$, the \textit{covering number} of $T$, and use it to characterize $\SPEX(n, T)$ for trees $T$ with $\delta = 1$. Here $\beta(T)$ is the size of a smallest vertex cover of $T$. Note that if $T$ has bipartition $A$, $B$ where $l+1 = |A| \le |B|$, then $\beta(T) \le l+1$. In the paper they show that $\SPEX(n, T) = K_{l, n-l}$ if and only if $\beta(T) \le l$, $\delta = 1$ and the graph induced by some covering set is $\beta(T) K_1$, while $\SPEX(n, T) = S_{n, l}$ if and only if $\beta(T) = l+1$ and $\delta = 1$. 
	The authors also add that ``it seems difficult to determine $\SPEX(n , F)$ when $\delta \ge 2$, and so we leave this as a problem." 
	Here they use $F$ to denote a tree. 
	Note that $\beta(T) = l+1$ whenever $\delta \ge 2$, and the smallest vertex cover is simply the smaller partite set of the tree. 
	In our paper we make progress on determining $\SPEX(n,T)$ for $\delta \ge 2$. 
	
	In \cite{byrne2024general}, the authors recommend studying a related extremal number $\ex^G(n, \mathcal{F})$, which is the maximum number of edges contained in an $n$-vertex $\mathcal{F}$-free graph containing $G$ as a subgraph, and their associated set of extremal graphs $\EX^G(n, \mathcal{F})$. 
	In particular, Theorem 1.1 of \cite{byrne2024general} gives several conditions on $\mathcal{F}$ for which $\SPEX(n, \mathcal{F}) \subset \EX^{K_{l, n-l}}(n, \mathcal{F})$.
	To apply it in the context of our paper, for trees $T$ in $\tmld$ where $\delta \ge 2$, we require to show the existence of some graph $H \in \EX^{K_{l, n-l}}(n, T)$ of the form $H = K_l \vee X$ where $X$ is a graph on $n-l$ vertices with at most linearly many edges in $n$, as $n \to \infty$. More crucially, we require the $l$ vertices of the smaller partite set of $K_{l, n-l}$ to induce a $K_l$ in $H$.
	Our results in this paper apply to trees for which the structure of $\EX^{K_{l, n-l}}(n, T)$ is not yet clear.
	Since there is still much to understand about this problem, it may be possible that there exist trees $T$ in $\tmld$ for some $\delta \ge 2$ with respect to which there exist no $H \in \SPEX(n, T)$ of the form $K_l \vee X$. 
	On the other hand, we prove in Theorem~\ref{thm: T-spex tighter bounds version} that there are several trees $T$ for which $\SPEX(n, T)$ consists of graphs of the form $K_l \vee X$ for some graph $X$ on $n-l$ vertices and linearly many edges in $n$.
	Consequently, it may be possible that $\SPEX(n, T) \not\subset \EX^{K_{l, n-l}}(n, T)$ for some $T$.


	\subsection{Outline and discussion of our results}
	\label{sec: outline}

	In order to study $\spex(n, T)$  and $\SPEX(n, T)$ for a given tree $T$ in $\tmld$, we need to first establish conditions on $m,l,\delta$ that allow us to embed~\footnote{Given two graphs $H$ and $G$, we will say that $H$ can be embedded in $G$ if there is a copy of $H$ contained as a subgraph in $G$.} $T$ into a graph that gives us insight into the spectral extremal graphs of $T$ and whose spectral radius is easier to study while still close to $\spex(n, T)$. 
	We study embeddings of $T$ in $\tmld$ into graphs of the form $\overline{K_l} \vee m S_{\delta}$, where star $S_\delta$ denotes the graph $K_{1,\delta-1}$ and $mS_{\delta}$ denotes $m$ disjoint copies of the star.
	Our main results, Theorems~\ref{thm: bounds for T - spex} and \ref{thm: T-spex tighter bounds version}, are based on corresponding embedding results, Theorems~\ref{thm: max degrees is G[R]} and \ref{thm: tree embedding}, respectively.
	
	We start by first stating our  main spectral theorems. These are proved in Section~\ref{sec: spectral fixed tree}. We need to establish some notation before we can state these results.
	
	For any non-negative integer $d$ and any $d$-regular graph on $n-l$ vertices $H_d$ (not necessarily simple, but undirected), let $H_d'$ denote the graph $K_l \vee H_d$ on $n$ vertices. Then $\lambda(H_d')$ is the largest root of the quadratic polynomial   \[p_d(x) = (x- (l-1))(x - d) - l(n-l) = x^2 - (l+d-1)x + (l-1)d - l(n-l). \]
	Here, $p_d(x)$ is the characteristic polynomial of a $2$-dimensional quotient matrix $Q_d$ of $A(H_d')$ where 
	\[Q_d = \begin{bmatrix}
		l-1 & n-l\\
		l & d
	\end{bmatrix}.\]
	Thus, 
	\begin{align*}
		\lambda(Q_d) = \lambda(H_d')  &= \frac{l+d-1 + \sqrt{(l+d-1)^2 + 4(nl + d -ld -l^2)} }{2} = \sqrt{nl} +o(n^{1/2}).
	\end{align*}    
	
	For the sake of clarity and compactness, we define for a fixed $l$,\\ $f(x, n) = {(l+x-1 + \sqrt{(l+x-1)^2 + 4(nl + x -lx -l^2)}) }/{2}$. Thus, $f(d,n) = \lambda(H_d')$ for any non-negative integer $d$, and $f(d,n) = \Theta_l(n^{1/2})$.
	
	Using the structural results from the embedding Theorem~\ref{thm: max degrees is G[R]} 
	we obtain the following result on $\spex(n, T)$ and the structure of graphs in $\SPEX(n, T)$. 

	\begin{thm}
		\label{thm: bounds for T - spex}
		Let $T$ be a tree such that $T \in \mathcal{T}_{m,l + 1}^{\delta}$ for some $m,l,\delta \in \mathbb{N}$. Then, there exists $N \in \mathbb{N}$ such that for all $n > N$, if $G \in \SPEX(n, T)$, we have that $G = H_1 \vee H_2$ where $H_1$ is a graph on $l$ vertices and $H_2$ is a graph on $n-l$ vertices with $\Delta(H_2) \le \delta - 1$.

		Moreover, if $G' = K_l \vee H_2'$ is any graph on $n$ vertices where $H_2'$ is a graph on $n-l$ vertices with $\Delta(H_2') \le \delta - 1$ and at most one vertex of $H_2'$ has degree $\delta - 1$, then $\lambda(G) \ge \lambda(G')$.
		
		Consequently,
		\[f(\delta - 2, n) \le \spex(n, T) \le f(\delta-1, n).\]
	\end{thm}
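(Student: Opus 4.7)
The plan is to combine the structural result of Theorem~\ref{thm: T - SPEX contains k_l,n-l} with the embedding dichotomy of Theorem~\ref{thm: max degrees is G[R]}, and then perform a short Perron--Frobenius comparison to deduce the spectral bounds. Let $N$ be the threshold furnished by Theorem~\ref{thm: T - SPEX contains k_l,n-l}, fix $n > N$, and let $G \in \SPEX(n, T)$. That theorem produces a set $L \subseteq V(G)$ of size $l$ such that every vertex of $V(G) \setminus L$ is adjacent to every vertex of $L$, so setting $H_1 := G[L]$ and $H_2 := G[V(G) \setminus L]$ gives $G = H_1 \vee H_2$ with the required vertex counts. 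Since $G$ is $T$-free, Theorem~\ref{thm: max degrees is G[R]}(i) forbids $\Delta(H_2) \geq \delta$, so $\Delta(H_2) \leq \delta - 1$, which is the first assertion.

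For the ``moreover'' claim, let $G' = K_l \vee H_2'$ satisfy the stated conditions $\Delta(H_2') \leq \delta - 1$ and at most one vertex of $H_2'$ of degree exactly $\delta - 1$. Then Theorem~\ref{thm: max degrees is G[R]}(ii) certifies that $G'$ is $T$-free, so the inequality $\lambda(G) \geq \lambda(G')$ is immediate from $G \in \SPEX(n, T)$ and the definition of $\spex(n, T)$.

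For the upper bound $\spex(n, T) \leq f(\delta - 1, n)$, I would use an eigenvector-averaging argument over the partition $(V(H_1), V(H_2))$. Let $x > 0$ be the Perron eigenvector of $G$ with eigenvalue $\lambda = \lambda(G)$, and put $s_i := \sum_{v \in V(H_i)} x_v$ for $i = 1, 2$. Summing the eigenvalue equation $\lambda x_v = (A(G) x)_v$ over $V(H_1)$, using $d_{H_1}(v) \leq l - 1$ and the fact that each vertex of $V(H_1)$ is adjacent to all of $V(H_2)$, yields $\lambda s_1 \leq (l - 1) s_1 + l s_2$. An analogous summation over $V(H_2)$, combined with $d_{H_2}(v) \leq \delta - 1$, gives $\lambda s_2 \leq (n - l) s_1 + (\delta - 1) s_2$. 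These two inequalities read $M (s_1, s_2)^T \geq \lambda (s_1, s_2)^T$ componentwise, where $M$ is the transpose of the quotient matrix $Q_{\delta - 1}$ defined just before the theorem. Since $(s_1, s_2)^T > 0$, Collatz--Wielandt gives $\lambda(G) \leq \lambda(M) = \lambda(Q_{\delta - 1}) = f(\delta - 1, n)$. For the lower bound, apply the ``moreover'' claim to $G' = K_l \vee H^{**}$, where $H^{**}$ is any $(\delta - 2)$-regular multigraph on $n - l$ vertices, whose existence is guaranteed by the convention allowing loops and multi-edges in the definition of $H_d$: no vertex of $H^{**}$ has degree $\delta - 1$, the hypothesis of the ``moreover'' claim holds, and the equitable partition of $K_l \vee H^{**}$ has quotient matrix $Q_{\delta - 2}$, so $\spex(n, T) = \lambda(G) \geq \lambda(G') = f(\delta - 2, n)$.

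The argument assembles previously established ingredients and contains no single substantive obstacle. The main points requiring care are the componentwise-to-eigenvalue passage via Collatz--Wielandt (which requires $(s_1, s_2)^T > 0$, supplied by Perron), and the existence of the regular multigraph realizing the lower bound irrespective of the parity of $(\delta - 2)(n - l)$, which is exactly what the paper's loops-allowed convention on $H_d$ addresses.
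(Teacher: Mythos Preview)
Your approach is essentially the paper's: combine Theorem~\ref{thm: T - SPEX contains k_l,n-l} with Theorem~\ref{thm: max degrees is G[R]}(i) for the structural claim and upper bound, and Theorem~\ref{thm: max degrees is G[R]}(ii) for the ``moreover'' claim and the lower bound. Your Collatz--Wielandt argument for the upper bound is exactly the content of Lemma~\ref{upper bounds on spectral radius for joins with max degrees}, which the paper invokes.

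There is one small gap in your lower-bound step. The inequality $\lambda(G') \le \spex(n, T)$, on which the ``moreover'' claim rests, requires $G'$ to be a \emph{simple} graph, since $\spex(n, T)$ is by definition a maximum over simple $T$-free graphs. When $(n-l)(\delta-2)$ is odd there is no simple $(\delta-2)$-regular graph on $n-l$ vertices, and your multigraph $H^{**}$ does not give a simple $G'$; the paper's loops-allowed convention on $H_d$ is only there to make $f(d,n)$ a well-defined formula, not to provide candidate graphs for the extremal problem. The paper handles the parity issue by taking $H_2'$ to be a simple graph with exactly one vertex of degree $\delta-1$ and all others of degree $\delta-2$: Theorem~\ref{thm: max degrees is G[R]}(ii) still applies, and $\lambda(K_l \vee H_2') \ge f(\delta-2, n)$ follows, e.g., by testing the Rayleigh quotient against the vector constant on each side with values coming from the Perron eigenvector of $Q_{\delta-2}$.
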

	
	Note that the upper bound on $\spex(n, T)$ in Theorem~\ref{thm: bounds for T - spex} was recently proved independently in \cite[Theorem 7]{fang2024spectral} for $n$ sufficiently large.

	The bounds in Theorem~\ref{thm: bounds for T - spex} determine $\spex(n, T)$ within an interval of length $f(\delta - 1, n) - f(\delta - 2, n)$. Moreover,
	
	$
	\frac{1}{2} < f(\delta - 1, n) - f(\delta - 2, n) \\
	\hspace*{0.25cm}\le  \frac{1}{2}\left(1 + \frac{2(l+ \delta) - 1}{\sqrt{(l+\delta-2)^2 + 4(nl + \delta-1 -l(\delta-1)) -l^2)} + \sqrt{(l+\delta -3)^2 + 4(nl + \delta -2 -l(\delta -2) -l^2)}}\right) = \frac{1}{2}+ O(n^{-1/2})$.\\

	In our next theorem, we are able to show that it is possible, using the embedding Theorem~\ref{thm: tree embedding}, to determine the structure of a spectral extremal graph $G$ more precisely by giving an almost complete description of the degrees of vertices. This improves the upper bound in Theorem~\ref{thm: bounds for T - spex} to $f(\delta-2, n) + \Theta(n^{-1})$ if it is known that $T$ is embeddable in $\overline{K_l} \vee m S_{\delta}$, determining $\spex(n, T)$ within an interval of length $\Theta(n^{-1})$. These results make progress on determining $\spex(n,T)$ for $\delta \ge 2$ as asked in \cite{fang2024spectral}. 
	
	
	\begin{thm}
		\label{thm: T-spex tighter bounds version}
		Let $T$ be a tree such that $T \in \mathcal{T}_{m,l + 1}^{\delta}$ for some $m,l,\delta \in \mathbb{N}$. If $T$ is embeddable in $\overline{K_l} \vee m S_{\delta}$, then  there exists $N \in \mathbb{N}$ such that if $n > N$, then for any $G \in \SPEX(n, T)$ we have that $G = K_l \vee H_2$ where $H_2$ is a graph on $n-l$ vertices with $\Delta(H_2) \le \delta -1$ and at most $(m-1) ((\delta - 1)^2 + 1)$ vertices of $H_2$ have degree $\delta - 1$.

		Consequently,  
		\[f(\delta - 2,n) \le \spex(n, T) \le f(\delta-2,n) + \frac{2(m-1) ((\delta - 1)^2 + 1)}{n}.\]

	\end{thm}
	
	The structure and examples of trees (including caterpillars and lobsters, and all trees with $t<l$) that are embeddable in $\overline{K_l} \vee m S_{\delta}$ are discussed in Section~\ref{sec: ex for embedding}. We give a description of such embedding results when we discuss Section~\ref{sec: embeddings} results below.
	
	Note that improving the upper bound on the number of vertices of degree $\delta - 1$ in $H_2$ will improve the upper bound on $\lambda(G) = \spex(n, T)$. 
	
	\begin{remark}
		We can recover Nikiforov's results on paths \cite{Nikiforovpaths} as follows.
		For even paths $P_{2l+2}$ we have $\delta = 1$. First, Theorem~\ref{thm: T - SPEX contains k_l,n-l} implies that for $n$ sufficiently large, if $G \in \SPEX(n, P_{2l+2})$, then $K_{l, n-l} \subset G$, and therefore $G \cong H_1 \vee H_2$ for some graph $H_1$ on $l$ vertices and a graph $H_2$ on $n-l$ vertices. Theorem~\ref{thm: max degrees is G[R]}(ii) further implies that $\Delta(H_2) = 0$. Finally, since having edges in $H_1$ cannot create a $P_{2l+2}$, we must have that $G \equiv K_l \vee (n-l) K_1$. 
		For odd paths $P_{2l+3}$, we have $\delta = 2$, and  $K_{l, n-l} \subset G$ for any graph $G \in \SPEX(n, P_{2l+2})$. Therefore $G \cong H_1 \vee H_2$ for some graph $H_1$ on $l$ vertices and a graph $H_2$ on $n-l$ vertices. Theorem~\ref{thm: max degrees is G[R]}(i) gives us that $\Delta(H_2) \le 1$. Further, Theorem~\ref{thm: max degrees is G[R]}(ii) now gives us that $K_l \vee (n-l)K_1$ is $P_{2l+3}$-free, so $\lambda(G) \ge \lambda(K_l \vee (n-l)K_1)$. Since, $t= 0 < l$ for odd paths, we also know that $H_2$ has less than $m$ disjoint edges. Using routinre arguments one can pin down that exact structure of $G$ by showing that $H_2$ has exactly one vertex, thus recovering Nikiforov's result for odd paths by showing $G \cong K_l \vee (K_2 \cup (n-l-2)K_1)$.   
		
	\end{remark}

	Note that Theorem~\ref{thm: T-spex tighter bounds version} enhances Theorem~\ref{thm: bounds for T - spex} by giving us conditions for $T$ so as to guarantee that $\spex(n, T)$ is ``close" to $f(\delta - 2,n)$. The following conjecture claims that if the conditions of Theorem \ref{thm: T-spex tighter bounds version} are not met then $\spex(n, T)$ is instead ``close" to $f(\delta -1,n)$
	
	\begin{conj}
		Let $T$ be any tree in $\tmld$. If $T \in \tmld$ cannot be embedded in $\overline{K}_l \vee m S_{\delta}$, then $\spex(n, T) = f(\delta - 1,n) + O(n^{-1})$ for $n$ large enough.
	\end{conj}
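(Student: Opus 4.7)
The upper bound $\spex(n,T) \le f(\delta-1,n)$ is immediate from Theorem~\ref{thm: bounds for T - spex}, so the task reduces to proving the matching lower bound $\spex(n,T) \ge f(\delta-1,n) - O(n^{-1})$. My plan is to construct an explicit $n$-vertex $T$-free graph $G^*$ whose spectral radius is within $O(n^{-1})$ of $f(\delta-1,n)$. The natural candidate is $G^* = K_l \vee H_2$, where $H_2$ is a $(\delta-1)$-regular graph on $n-l$ vertices (when $(\delta-1)(n-l)$ is even, and otherwise a graph that is $(\delta-1)$-regular except on a constant number of vertices adjusted to correct parity). Taking the equitable partition $\{V(K_l),V(H_2)\}$ and its $2\times 2$ quotient matrix $\left[\begin{smallmatrix} l-1 & n-l \\ l & \delta-1\end{smallmatrix}\right]$, the characteristic polynomial is exactly $p_{\delta-1}(x)$, so $\lambda(G^*) = f(\delta-1,n)$ in the regular case; in the nearly-regular case, a routine interlacing/perturbation estimate yields $\lambda(G^*) = f(\delta-1,n) - O(n^{-1})$.

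Everything therefore reduces to choosing $H_2$ so that $K_l \vee H_2$ is $T$-free. My proposal is to take $H_2$ to be a $(\delta-1)$-regular graph of girth greater than $m$, which exists for all sufficiently large $n$ by the Erd\H{o}s--Sachs existence result (or, more explicitly, via LPS Ramanujan graph constructions). The key lemma to establish is: \emph{if $T\in \tmld$ does not embed in $\overline{K}_l \vee m S_\delta$ and $H_2$ is $(\delta-1)$-regular with girth greater than $m$, then $T$ does not embed in $K_l \vee H_2$}. Suppose for contradiction there is an embedding $\phi: T \to K_l \vee H_2$ and set $S := \phi^{-1}(V(K_l))$, so $|S| \le l$. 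The girth assumption forces the induced image $\phi(V(T)) \cap V(H_2)$ to span a forest $F$ of maximum degree at most $\delta-1$ in $H_2$; moreover, since $|A| = l+1 > |S\cap A|$, at least one vertex $v^* \in A$ maps into $H_2$, and by the degree condition $d_T(v^*) \ge \delta$ its $T$-neighbors distribute between $V(K_l)$ (contributing at most $|S\cap B| \le l$ of them) and $N_{H_2}(\phi(v^*))$ (contributing at most $\delta-1$). The plan is then to convert $\phi$ into an embedding $\psi: T\to \overline{K}_l \vee m S_\delta$ by sending $S$ into the $\overline{K}_l$ part and routing each $A$-vertex of $T$ that lies in $H_2$, together with its relevant $H_2$-neighbors, into a distinct star slot of $mS_\delta$, producing the desired contradiction.

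The main obstacle is precisely this last repackaging step: a forest of maximum degree $\le \delta-1$ is not in general a subgraph of $mS_\delta$ (for instance, $P_4$ when $\delta = 3$), so the conversion cannot be purely local. Overcoming this requires exploiting the fact that only $l$ vertices of $T$ escape into $V(K_l)$, that every high-degree internal node of $F$ must correspond to an $A$-vertex of $T$ (because only $A$-vertices are forced to have degree $\ge \delta$), and the global bipartite structure of $T$; combined, these should force $F$ to be a disjoint union of at most $l+1$ stars centered at $A$-vertices, which can then be embedded into $\overline{K}_l \vee mS_\delta$. An alternative construction worth pursuing is to replace $H_2$ by the $(\delta-1)$-regular graph $\lfloor (n-l)/\delta\rfloor K_\delta$ (up to $O(1)$ parity corrections), since each embedded subtree of $T$ inside a $K_\delta$-component is a bipartite subgraph of $K_\delta$ on at most $\delta$ vertices, far closer to $S_\delta$, potentially making the repackaging step more tractable at the cost of separately verifying that this substitute $H_2$ is also $T$-restrictive. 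In either approach, the heart of the argument is converting the hypothesized non-embedding in $\overline{K}_l \vee mS_\delta$ into a non-embedding in $K_l \vee H_2$, and making that conversion rigorous is the principal technical challenge.
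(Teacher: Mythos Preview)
This statement is a \emph{conjecture} in the paper, not a theorem: the authors explicitly leave it open and provide no proof, so there is nothing in the paper to compare your argument against. You should be aware that you are attempting to settle an open problem, not reproduce a known proof.

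Your overall strategy is the natural one, and you have correctly isolated the crux: one must exhibit, for each non-embeddable $T$, a $(\delta-1)$-regular (or nearly so) graph $H_2$ on $n-l$ vertices such that $K_l \vee H_2$ is $T$-free. But your proposed ``repackaging'' argument has genuine gaps, not just unfinished details. First, your plan to send $S=\phi^{-1}(V(K_l))$ into the $\overline{K}_l$ part fails immediately whenever $S$ contains an adjacent pair of vertices of $T$ (one from $A$, one from $B$): such an edge is realized inside $K_l$, but $\overline{K}_l$ has no edges, so the map $\psi$ cannot simply copy $S$ over. Second, your hope that the forest $F\subset H_2$ must decompose into at most $l+1$ stars centered at $A$-vertices is not justified. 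For instance, two $A$-vertices $a_1,a_2$ mapped into $H_2$ may share a common $B$-neighbor $b$ also in $H_2$, yielding a path $a_1\text{--}b\text{--}a_2$ in $F$; with further $H_2$-neighbors of $a_1,a_2$ this is already not a star-forest, and there is no evident mechanism forcing such configurations to be absent. The same objection applies to your alternative $H_2=\lfloor(n-l)/\delta\rfloor K_\delta$: a tree on at most $\delta$ vertices need not embed in $S_\delta$ (e.g.\ $P_4\not\subset K_{1,3}$ when $\delta=4$), so the components of $F$ still do not repackage into star slots.

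In short, both of your candidate constructions for $H_2$ are plausible but neither is shown to be $T$-free, and your sketched conversion from an embedding in $K_l\vee H_2$ to one in $\overline{K}_l\vee mS_\delta$ breaks down at identifiable points. A proof of the conjecture would likely require either a substantially more delicate analysis of how $A$-vertices and $B$-vertices of $T$ can distribute between the two parts, or a construction of $H_2$ tailored to the specific structure of the given non-embeddable $T$ rather than a universal choice.
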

	\vspace*{0.2cm}
	
	Now, we complete the discussion of the remainder of the paper.
	\vspace*{0.2cm}
	
	In Section~\ref{sec: some structural results}, we prove the basic structure of spectral extremal graphs of any $T\in\tmld$. 
	Note that the following theorem was recently proved independently in \cite[Theorem 1]{fang2024spectral}. We give an explicit bound on the threshold $N$ for $n$ as given in (\ref{threshold for n}), which is at most $m^{(12+o_l(1))l+42}$ or more simply, $m^{O(l)}$. We have not attempted to optimize this bound. This same bound on $N$ applies to Theorems~ \ref{thm: bounds for T - spex} and \ref{thm: T-spex tighter bounds version}.
	
	\begin{thm}
		\label{thm: T - SPEX contains k_l,n-l}
		Let $T$ be a tree such that $T \in \mathcal{T}_{m,l + 1}^{\delta}$ for $m,l,\delta \in \mathbb{N}$. There exists $N \in \mathbb{N}$ such that for all $n > N$, any graph $G \in \mathrm{SPEX}(n, T)$ contains a copy of $K_{l, n-l}$.
	\end{thm}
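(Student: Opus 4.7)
The plan is to analyze the positive Perron eigenvector of an extremal graph $G \in \SPEX(n,T)$, identify a set $L$ of $l$ ``heavy'' vertices that will serve as the smaller side of the embedded $K_{l,n-l}$, and then use Theorem~\ref{thm: max degrees is G[R]} to upgrade this to the full complete-bipartite join between $L$ and $V(G)\setminus L$. To begin, let $\mathbf{x}$ be the Perron eigenvector of $G$ normalized by $\|\mathbf{x}\|_\infty = 1$, attained at some vertex $u^*$, and write $\lambda = \lambda(G)$. Since the smaller partite set of $T$ has $l+1$ vertices, $K_{l,n-l}$ is itself $T$-free, so $\lambda \geq \lambda(K_{l,n-l}) = \sqrt{l(n-l)}$. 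In the other direction, the standard greedy tree-embedding result (any graph of minimum degree at least $m-1$ contains every $m$-vertex tree), applied iteratively via peeling of low-degree vertices, yields $e(G) \leq (m-2)n$; combined with Hong's bound this gives $\lambda \leq \sqrt{2(m-2)n}$. In particular $\lambda = \Theta(\sqrt n)$ and $d_{u^*} \geq \lambda = \Omega(\sqrt n)$.

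\textit{Locating the heavy set.} Let $L$ consist of the $l$ vertices of $G$ with the largest $\mathbf{x}$-entries, and write $V_2 = V(G)\setminus L$. The structural core of the argument is a near-dichotomy in the eigenvector entries: $x_v \geq 1 - o(1)$ for $v \in L$ while $x_w = o(1)$ for $w \in V_2$. This ``gap'' is obtained by comparing $\mathbf{x}$ to the Perron eigenvector of $K_{l,n-l}$ itself (which has exactly two distinct entry values in ratio $\sqrt{(n-l)/l}$), using the Rayleigh identity $\lambda \|\mathbf{x}\|_2^2 = 2\sum_{uv \in E} x_u x_v$ together with $e(G)\leq (m-2)n$ to rule out intermediate entry values and concentrate the eigenvector mass onto $L$. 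A key consequence is that every $v \in L$ is \emph{near-universal}: $d_v \geq n - C$ for some $C = C(m,l,\delta)$, since otherwise a direct Rayleigh computation contradicts $\lambda \geq \sqrt{l(n-l)}$.

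\textit{Upgrading to $K_{l,n-l}$.} Define $G^+$ by adding to $G$ every missing edge between $L$ and $V_2$. If $G = G^+$ there is nothing more to do, so suppose $G \subsetneq G^+$; then $\lambda(G^+) > \lambda(G)$, and by the extremality of $G$ the graph $G^+$ must contain a copy of $T$. Now $G^+$ contains the join $G[L] \vee G[V_2]$, and Theorem~\ref{thm: max degrees is G[R]} provides a dichotomy on $G[V_2]$: if $\Delta(G[V_2]) \leq \delta-1$ with at most one vertex of degree exactly $\delta-1$, then by part (ii) the graph $G^+$ is itself $T$-free, contradicting $\lambda(G^+) > \lambda(G)$ and extremality; if instead $\Delta(G[V_2]) \geq \delta$, then by part (i), $T$ embeds into $G[L] \vee G[V_2]$, and one may reroute the image of each larger-partite vertex of $T$ through the near-universal neighborhoods of $L$ in $G$ to avoid the $O(1)$ added edges, concluding $T \subseteq G$ and again contradicting $T$-freeness. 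Either way we reach a contradiction, so $G = G^+$ and hence $K_{l,n-l} \subseteq G$.

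\textit{Main obstacle.} The most intricate step is the second case of the dichotomy in the final phase: rerouting a $T$-copy from $G^+$ into $G$. This requires exploiting the near-universal degrees of $L$ in a precise, quantitative way---the number of vertices of $V_2$ missed by at least one vertex of $L$ is at most $lC$, and this ``forbidden set'' must be small relative to the relevant neighborhoods in $G[V_2]$ in which the larger partite of $T$ is embedded. Theorem~\ref{thm: max degrees is G[R]} is essential here because parts (i) and (ii) together supply exactly the structural split that drives this phase. Achieving the explicit threshold $N$ polynomial in $m$ with exponent linear in $l$, as stated in the theorem, requires tracking polynomial dependencies carefully through the Rayleigh estimates of Phase~2 and through the rerouting argument of Phase~3.
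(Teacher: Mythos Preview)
Your overall architecture---isolate $l$ heavy vertices, show they are nearly universal, then upgrade to a full join---matches the paper's, and your preliminary bounds $\sqrt{l(n-l)} \le \lambda \le \sqrt{2(m-2)n}$ and $e(G)\le (m-2)n$ are the right starting point. However, the ``upgrading'' phase has two genuine gaps.

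First, your case split via Theorem~\ref{thm: max degrees is G[R]} is not exhaustive. Part~(i) handles $\Delta(G[V_2])\ge \delta$ and part~(ii) handles $\Delta(G[V_2])\le \delta-1$ with \emph{at most one} vertex of degree $\delta-1$; the regime $\Delta(G[V_2])=\delta-1$ with several vertices attaining $\delta-1$ is covered by neither. In that regime you know $G^+$ contains $T$ (from $\lambda(G^+)>\lambda(G)$), but you have no structural description of the embedding to feed into a rerouting. Second, even in your Case~B the rerouting is not sound as stated: the embedding from Theorem~\ref{thm: max degrees is G[R]}(i) pins the $\delta$ neighbours of the distinguished $A$-vertex to $N_{G[V_2]}(v^*)$ for one particular $v^*$, and those $\delta$ vertices may all lie in the ``bad'' set missed by some vertex of $L$. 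With only $d_v\ge n-C$ you have no control over which $V_2$-vertices have high $G[V_2]$-degree, so you cannot in general choose $v^*$ to avoid the bad set. Relatedly, the claim $d_v\ge n-C$ for a constant $C=C(m,l,\delta)$ is itself too strong at this stage: the paper only obtains $d_v\ge (1-\tfrac{1}{8l^2m})n$ before the final step, and the $n-O(1)$ bound is equivalent to the conclusion you are trying to prove.

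The paper closes the argument by a different mechanism. After establishing $|L'|=l$ and $d_v\ge (1-o(1))n$ for $v\in L'$, it performs a \emph{local} swap (Lemma~\ref{lem4.5}): for any single vertex $v$ not adjacent to all of $L'$, delete its edges and reconnect it to $L'$; this preserves $T$-freeness because $v$ now behaves exactly like any vertex in the large common neighbourhood $R$ of $L'$, so any $T$-copy using $v$ can substitute an unused $R$-vertex. This yields $\sum_{w\sim v}\x_w\ge l-o(1)$ for every $v$, which forces the restriction of $\x$ to the exceptional set $E=V\setminus(L'\cup R)$ to satisfy $\lambda(G[E])\ge \tfrac{4}{5l}\lambda$; comparing with $\lambda(G[E])\le \sqrt{(2m-4)|E|}$ and $|E|\le n/(8lm)$ gives a contradiction unless $E=\emptyset$. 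The point is that the one-vertex swap has a trivially verifiable $T$-freeness certificate, whereas your global passage to $G^+$ does not.
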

	
	It is shown in \cite{byrne2024general} that for almost  all trees $\SPEX(n, T) = K_{l, n-l}$. Note that our result applies to all trees and shows that for any tree $T$, the complete bipartite graph $K_{l, n-l}$ is a subgraph of any graph $G$ in $\SPEX(n, T)$. What remains in determining the structure of $G$ are the edges  not covered by the subgraph $K_{l, n-l}$. We explore this next.
	\vspace*{0.2cm}
	
	In Section~\ref{sec: embeddings}, our results, which might be of independent interest, consider embedding trees $T \in \tmld$ into graphs of the form $H = H_1 \vee H_2$ with $|V(H_1)| = l$. 
	
	Our first result, proved in Section~\ref{sec: proof of 1st embedding}, establishes that $\delta$ is the threshold for maximum degree of $H_2$ for whether all or none of the trees in $\tmld$ can be embedded into such an $H$.
	
	\begin{thm}
		\label{thm: max degrees is G[R]}
		Let $T$ be a tree such that $T \in \mathcal{T}_{m,l + 1}^{\delta}$ for $m,l,\delta \in \mathbb{N}$.
		Suppose $H= H_1 \vee H_2$ where $H_1$ is a graph on $l$ vertices and $H_2$ is a graph on $n-l$ vertices, where $n \ge m$.
		\begin{enumerate}[(i)]
			\item If $\Delta(H_2)\geq \delta$, then $T$ can be embedded in $H$. 
			\item If $\Delta(H_2) \le \delta -1$ and at most one vertex of $H_2$ is of degree $\delta - 1$, then $T$ cannot be embedded in $H$.
		\end{enumerate}
	\end{thm}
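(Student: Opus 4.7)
Part (i) calls for an explicit embedding. The plan is to pick a vertex $v_0 \in A$ with $d_T(v_0) = \delta$ (which exists by the definition of $\delta$) together with a vertex $x \in V(H_2)$ achieving $d_{H_2}(x) \geq \delta$. I would send $v_0 \mapsto x$, map the other $l$ vertices of $A$ injectively but arbitrarily to the $l$ vertices of $H_1$, send the $\delta$ neighbors of $v_0$ in $T$ (all of which lie in $B$) to $\delta$ distinct neighbors of $x$ in $H_2$, and then place the remaining $m - l - 1 - \delta$ vertices of $B$ into any free vertices of $V(H_2) \setminus \{x\}$; there is room because $n \geq m$. Edge-preservation is automatic from the join: edges of $T$ incident to $v_0$ are realized by edges of $H_2$ between $x$ and its chosen neighbors, and every other edge of $T$ runs between $A \setminus \{v_0\}$ (placed in $V(H_1)$) and $B$ (placed in $V(H_2)$), hence lies in $H_1 \vee H_2$.

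For part (ii) I would argue by contradiction using a double count. Assume $\phi : V(T) \to V(H)$ is an embedding, and split $A = A_1 \sqcup A_2$, $B = B_1 \sqcup B_2$ according to whether each vertex is mapped into $V(H_1)$ or $V(H_2)$. Since $|A| = l + 1 > l = |V(H_1)|$, we have $|A_2| \geq 1$, and since $A_1 \cup B_1 \subseteq V(H_1)$,
\[|B_1| \;\leq\; l - |A_1| \;=\; l - (l + 1 - |A_2|) \;=\; |A_2| - 1.\]
I will bound the number $e_T(A_2, B_1)$ of edges of $T$ between $A_2$ and $B_1$ in two ways. On the upper side, $T[A_2 \cup B_1]$ is a subforest of $T$, so it has at most $|A_2| + |B_1| - 1 \leq 2|A_2| - 2$ edges. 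On the lower side, each $v \in A_2$ satisfies $d_T(v) \geq \delta$, and its edges to $B_2$ must embed as edges of $H_2$ incident to $\phi(v)$, of which there are at most $d_{H_2}(\phi(v))$. The hypothesis that at most one vertex of $H_2$ has degree $\delta - 1$, the rest having degree at most $\delta - 2$, gives
\[\sum_{v \in A_2} d_{H_2}(\phi(v)) \;\leq\; (\delta - 1) + (|A_2| - 1)(\delta - 2),\]
and subtracting from $\sum_{v \in A_2} d_T(v) \geq \delta|A_2|$ yields $e_T(A_2, B_1) \geq 2|A_2| - 1$. Combining, $2|A_2| - 1 \leq 2|A_2| - 2$, the desired contradiction.

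Both parts reduce to bookkeeping, so I do not expect a serious obstacle. The most delicate step is the degree-sum estimate in (ii), where the ``at most one vertex of degree $\delta - 1$'' hypothesis is essential: if two vertices of $H_2$ were allowed to have degree $\delta - 1$, the analogous sum could reach $(\delta - 1)|A_2|$ and the lower bound would collapse to $|A_2|$, matching rather than exceeding the forest upper bound. The extreme case $|A_2| = 1$, which forces $|B_1| = 0$ and $e_T(A_2, B_1) = 0$, is handled by the same lower bound, which still delivers $e_T(A_2, B_1) \geq 1$.
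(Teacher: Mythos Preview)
Your proof is correct and follows essentially the same approach as the paper. Part (i) is identical; for part (ii) the paper counts vertices of $N_T(A_2)$ forced into $V(H_1)$ rather than edges $e_T(A_2,B_1)$, but both arguments combine the forest bound on a subgraph of $T$, the degree constraint $\sum_{v\in A_2}d_{H_2}(\phi(v))\le (\delta-1)+(|A_2|-1)(\delta-2)$, and the size $|V(H_1)|=l$ to reach the same contradiction.
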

	
	This theorem is one of the key ingredients in proving our general spectral bounds that apply to any fixed tree $T$, as given in Theorem~\ref{thm: bounds for T - spex}.
	
	In our main embedding result, Theorem~\ref{thm: tree embedding}, we improve the previous theorem  by considering $H$ of the form $\overline{K_l} \vee m S_{\delta}$. Being able to embed $T$ in $\overline{K_l} \vee m S_{\delta}$ is a key ingredient of our stronger spectral bound in Theorem~\ref{thm: T-spex tighter bounds version}.
	
	Since not all trees in $\tmld$ can be embedded in $\overline{K_l} \vee m S_{\delta}$, we study conditions under which this is possible.
	The simplest class of such embeddable trees is those that satisfy $t<l$~\footnote{Recall our discussion on the parameter $t$, defined as $t := m-1 - (l+1)\delta$, in Remark~\ref{rem: t} as interpreting $t$ as the number of ``excess'' edges in $T$. Since the size of the smaller partite set of $T$ is $l+1$, the inequality $t< l$ is equivalent to saying that the size of the larger partite set is less than $\delta(l+1)$. Algebraically,  $t< l$ is equivalent to $m<(l+1)(\delta+1)$.}. But, we prove results that show the class of trees that are embeddable in $\overline{K_l} \vee m S_{\delta}$ is much larger than just trees with $t<l$.
	
	One such easy-to-check general condition is stated as  Hypothesis~\ref{hypothesis}. Since the statement of this hypothesis requires some new technical terminology defined in Section~\ref{sec: proof of 2nd embedding}, we postpone its statement till that section. The following theorem is proved in  Section~\ref{sec: proof of 2nd embedding}.

	\begin{thm}
		\label{thm: tree embedding}
		Let $T$ be a tree such that $T \in \mathcal{T}_{m,l + 1}^{\delta}$ for $m,l,\delta \in \mathbb{N}$. If $T$ satisfies Hypothesis~\ref{hypothesis}, then $T$ can be embedded into $\overline{K_l} \vee m S_{\delta}$.
	\end{thm}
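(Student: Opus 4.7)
The plan is to interpret the host graph $\overline{K_l}\vee mS_\delta$ as suggesting a very natural decomposition of any tree $T$ that should be embeddable in it: an independent set $I\subseteq V(T)$ of size at most $l$ (to fill the $\overline{K_l}$ side) whose removal leaves a star forest with at most $m$ components, each of order at most $\delta$ (so each fits inside a copy of $S_\delta=K_{1,\delta-1}$). My main effort will be to show, using Hypothesis~\ref{hypothesis}, that such an $I$ always exists; once we have it, the embedding is essentially automatic. Intuitively, the sufficient condition $t<l$ from Fact~\ref{fact} should allow this because the only $m-1-(l+1)\delta=t$ edges of $T$ beyond the ``mandatory'' $(l+1)\delta$ edges forced by the degree-$\delta$ condition on $A$ can be absorbed by the removal of a carefully chosen small vertex set.

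Given such an $I$, I would define $\phi:V(T)\to V(\overline{K_l}\vee mS_\delta)$ as follows. Let $L=\{v_1,\dots,v_l\}=V(\overline{K_l})$ and denote the $m$ copies of $S_\delta$ by $S^{(1)},\dots,S^{(m)}$, with center $c_j$ and leaves $\ell_{j,1},\dots,\ell_{j,\delta-1}$ in $S^{(j)}$. First inject $I$ arbitrarily into $L$, which is possible because $|I|\le l$. Next, enumerate the star components $C_1,\dots,C_s$ of $T-I$ (so $s\le m$) and send each $C_j$ into a distinct $S^{(j)}$: if $|C_j|\ge 3$ then $C_j$ has a unique center which goes to $c_j$ and its leaves are injected into $\{\ell_{j,1},\dots,\ell_{j,\delta-1}\}$ (possible since $|C_j|-1\le\delta-1$); if $|C_j|\le 2$, I pick any vertex of $C_j$ as the ``center'' and proceed similarly. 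To check that $\phi$ is an embedding, I observe that every edge of $T$ either (i) has exactly one endpoint in $I$, in which case $\phi$ sends it to an edge between $L$ and $V(mS_\delta)$, always present in the join, or (ii) lies entirely inside some $C_j$, in which case by construction $\phi$ sends it to a center-to-leaf edge of $S^{(j)}$. The independence of $I$ rules out the remaining case of both endpoints lying in $I$, so every edge of $T$ is mapped to an edge of the host.

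The main obstacle will be extracting the structural lemma above from Hypothesis~\ref{hypothesis}, whose precise formulation depends on terminology deferred to Section~\ref{sec: proof of 2nd embedding}. Depending on how the hypothesis is phrased, one may need to reorganize the decomposition --- for instance by choosing carefully which vertices of $A$ are placed in $I$ versus left in the star-forest part, since the higher-degree vertices of $T$ are naturally forced to play the role of star centers --- and to argue that such a choice always exists under Hypothesis~\ref{hypothesis}. Controlling simultaneously the three constraints $|I|\le l$, per-component size at most $\delta$, and at most $m$ components is the technical heart of the argument; the clean block-by-block construction above then converts any such decomposition into the required embedding in one step.
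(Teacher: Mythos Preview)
Your reformulation is correct and natural: an embedding of $T$ into $\overline{K_l}\vee mS_\delta$ exists if and only if there is an independent set $I\subseteq V(T)$ with $|I|\le l$ such that every component of $T-I$ is a star of order at most $\delta$. The ``if'' direction you spelled out is fine, and the ``only if'' is immediate by pulling back $V(\overline{K_l})$ along any embedding. So your structural lemma is \emph{equivalent} to the theorem, not a reduction of it.

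That is precisely the gap. You have restated what must be proved and then stopped; the sentence ``The main obstacle will be extracting the structural lemma above from Hypothesis~\ref{hypothesis}'' is an admission that the actual content is missing. Hypothesis~\ref{hypothesis} is not a black box you can hope to unpack later: it is a specific numerical condition on a carefully chosen subset $I\subseteq J'\subseteq A$ involving the parameters $a_i$ (counting certain distance-two $A$-vertices) and $t_i=d_T(v_i)-\delta$, namely $\sum_{v_i\in I}a_i\ge 2+\sum_{v_i\in I}(t_i-1)$ together with connectivity and neighbourhood constraints on $T^I$. The paper's proof engages with this data directly. First, Lemma~\ref{lem: there is I for T'} iteratively prunes $I$ to a subset $I'$ satisfying the extra pointwise inequalities $a_i\le t_i+1-d_{T^{I'}}(v_i)$; this refinement is what makes the counting in the embedding step balance. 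Then the embedding is built explicitly: the vertices of $I'\cup\bigcup_{v_i\in I'}A_i$ go to star centers in $mS_\delta$, while a mixture of $B$-vertices (from $T^{I'}$ and from $N_T(v_i)\cap N_T(A_i)$) \emph{and} the leftover $A$-vertices are packed into $\overline{K_l}$, with a final check that no $A$-vertex and $B$-vertex placed there are adjacent. In your language this mixture is exactly the independent set $I$ you want, but producing it requires the refinement lemma and the degree bookkeeping you have not supplied. Without that, there is no proof here, only a target.
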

	
	The following fact, proved in Section~\ref{sec: proof of 2nd embedding}, shows trees with $t<l$ form a subfamily of  trees that satisfy Hypothesis~\ref{hypothesis}. 
	
	\begin{fact}\label{fact}
		If $T\ \in \tmld$ with $t <l$,  then $T$ satisfies Hypothesis~\ref{hypothesis}.
	\end{fact}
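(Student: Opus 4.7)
The plan is to translate the numerical hypothesis $t < l$ into the structural conditions required by Hypothesis~\ref{hypothesis}. Recall that
\[
t \;=\; m - 1 - (l+1)\delta \;=\; \sum_{v \in A}\bigl(d_T(v) - \delta\bigr) \;=\; \sum_{v \in A} t_v,
\]
where $A$ is the smaller partite set of $T$, $|A|=l+1$, and $t_v = d_T(v) - \delta \geq 0$ is the excess degree at each $v \in A$.

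First, I would extract the key counting consequences of $t < l$. Since the $t_v$'s are nonnegative integers summing to less than $l$, at most $l-1$ vertices of $A$ carry any positive excess, so at least two vertices of $A$ have degree exactly $\delta$. Moreover, the total number of excess edges is strictly less than $l$, leaving at least one unit of slack against the capacity $l$ of the independent-set part $\overline{K_l}$ of the target graph $\overline{K_l} \vee m S_{\delta}$ used in Theorem~\ref{thm: tree embedding}.

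Next, I would unpack Hypothesis~\ref{hypothesis} clause by clause. Since Hypothesis~\ref{hypothesis} is precisely the structural condition that supports embedding $T$ into $\overline{K_l} \vee m S_{\delta}$, it presumably prescribes how the vertices of $A$ and their incident edges are distributed between the $l$ vertices of $\overline{K_l}$ and the star centers or leaves of $m S_{\delta}$. The abundance of degree-$\delta$ vertices in $A$ guaranteed by $t < l$ supplies the flexibility needed to designate one or more such vertices as ``distinguished'' and to place them within a star, while the strict inequality $t < l$ ensures that the $\overline{K_l}$ component can absorb all the excess edges without exceeding its capacity $l$.

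The main obstacle will be verifying each individual clause of Hypothesis~\ref{hypothesis} under the single assumption $t < l$; in particular, matching the slack $l - t \geq 1$ to every numerical demand made by the hypothesis, and separately handling boundary cases such as $t = 0$ (every vertex of $A$ has degree exactly $\delta$) and the concentration of all excess at a single vertex of $A$. Once these verifications are carried out and the explicit labeling or partition of $A$ required by Hypothesis~\ref{hypothesis} is produced, Fact~\ref{fact} follows by direct substitution.
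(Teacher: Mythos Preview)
Your proposal is not a proof; it is a plan that explicitly defers the actual work (``I would unpack Hypothesis~\ref{hypothesis} clause by clause'') while guessing at what the hypothesis might say based on the embedding application. In fact, Hypothesis~\ref{hypothesis} is a precise combinatorial condition involving the set $J'\subset A$ and the quantities $a_i$ (the number of vertices of $A\setminus J'$ at distance two from $v_i$) and $t_i$: one must exhibit a nonempty $I\subset J'$ with $T^I$ a tree, $\sum_{v_i\in I}a_i\ge 2+\sum_{v_i\in I}(t_i-1)$, and when $|I|>1$ also $a_i\le t_i$ for all $v_i\in I$ and $N_T(T^I\setminus J')=I$. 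None of this structure appears in your outline, so there is no way to complete it into a verification.

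The paper's argument is a two-line case split that you are missing entirely. If some $v\in J'$ has $a_v>t_v$, take $I=\{v\}$; the inequality $a_v\ge t_v+1=2+(t_v-1)$ is then immediate. Otherwise $a_i\le t_i$ for every $v_i\in J'$, and one takes $I=J'$: the key identities $\sum_{v_i\in J'}a_i=l+1-j'$ (since the $A_i$ partition $A\setminus J'$) and $\sum_{v_i\in J'}t_i=t$ turn the required inequality into $l+1-j'\ge 2+t-j'$, i.e.\ $t<l$. Your observations that at least two vertices of $A$ have degree $\delta$ and that there is ``slack $l-t\ge 1$'' are correct but are not the ingredients used; the argument hinges on the partition identity for $\sum a_i$ and the case distinction, neither of which you identified.
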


	In Section~\ref{sec: ex for embedding}, see Examples~\ref{ex: tmld construction} and~\ref{ex:lobsters} for structure and examples of such trees. 
	
	By Theorem~\ref{thm: tree embedding}, it follows that $T$ with $t<l$ can be embedded into $\overline{K_l} \vee m S_{\delta}$. This result is sharp: for a given $l$ and any $t \ge l$, we can construct trees $T \in \tmld$ that cannot be embedded into $\overline{K_l} \vee m S_{\delta}$. 
	This is further generalized and discussed in a follow-up paper \cite{DKKfamily} that studies the spectral extremal problem for the family $\tmld$. 
	
	It is also true that many trees with $t \ge l$ satisfy Hypothesis~\ref{hypothesis}, and consequently can be embedded in $\overline{K_l} \vee m S_{\delta}$. We discuss this further and give a method for constructing such trees in Section~\ref{sec: ex for embedding}. In fact, we show the following theorem by giving a general construction that allows us to ``combine'' two trees, $T_i \in \mathcal{T}_{m_i,l_i+1}^{\delta}$, to create a new tree $T\in \tmld$, with $m=m_1+m_2$ and $l=l_1+l_2+1$, that is embeddable in $\overline{K_l} \vee m S_{\delta}$. 
	
	\begin{thm}
		\label{thm: t>l embeddible trees constructions}
		Consider a nonempty family of trees $\mathcal{T}_{m,l+1}^{\delta}$ with  $l\geq 1$ and $\delta>1$. There exists $T \in \mathcal{T}_{m,l+1}^{\delta}$ such that $T$ satisfies Hypothesis~\ref{hypothesis}, and consequently can be embedded in $\overline{K_l} \vee m S_{\delta}$.
	\end{thm}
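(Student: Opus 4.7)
The plan is to split into two cases based on the value of the excess parameter $t = m - 1 - (l+1)\delta$. The first case handles the ``small excess" regime directly from Fact~\ref{fact}, while the second case requires an explicit construction.

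\textbf{Case 1: $t < l$.} Here we can invoke Fact~\ref{fact} directly: since $\tmld$ is assumed nonempty, any $T \in \tmld$ satisfies Hypothesis~\ref{hypothesis}, and Theorem~\ref{thm: tree embedding} then gives the desired embedding. So we simply exhibit one such tree (e.g.\ the tree formed by attaching $\delta$ leaves to each vertex of a path on the smaller partite set, with remaining edges distributed as leaves) to confirm nonemptiness is achievable and apply the fact.

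\textbf{Case 2: $t \geq l$.} The plan here is to use the ``combining" construction advertised in the paragraph preceding the theorem: given $T_1 \in \mathcal{T}_{m_1, l_1+1}^\delta$ and $T_2 \in \mathcal{T}_{m_2, l_2+1}^\delta$, one can form a tree $T \in \mathcal{T}_{m_1+m_2, l_1+l_2+2}^\delta$ by identifying or joining a chosen vertex of $T_1$ with one of $T_2$ in a bipartition-preserving way. I would carry out induction on $l$ (equivalently, on the number of ``building blocks"), with the base case being trees covered by Case~1. In the inductive step, write the target parameters as $m = m_1 + m_2$ and $l+1 = (l_1+1) + (l_2+1)$ with each $\mathcal{T}_{m_i, l_i+1}^\delta$ nonempty and, crucially, each $t_i = m_i - 1 - (l_i+1)\delta$ satisfying $t_i < l_i$, so that each $T_i$ is embeddable by Case~1. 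Then I would argue that the combining operation preserves Hypothesis~\ref{hypothesis}: the embeddings of $T_1$ and $T_2$ into the respective ``clique-plus-star-forest" hosts can be glued along the identified vertex, because the vertex in $\overline{K_l}$ used to host the merge is adjacent to all of $mS_\delta$.

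The main technical obstacle is the bookkeeping in the inductive step: one must choose the split $(m_1, l_1)$, $(m_2, l_2)$ so that both subproblems fall into the $t_i < l_i$ regime (or into a strictly smaller instance already handled) while summing to the prescribed $(m, l)$. For instance, taking $l_1$ as large as possible subject to $m_1 \leq (l_1+1)(\delta+1) - 1$ (the characterization of $t_1 < l_1$ noted in Remark~\ref{rem: t}) forces a corresponding $l_2$ and $m_2$; one must then verify $m_2 \geq \max\{(l_2+1)\delta+1, 2l_2+2\}$ so that $\mathcal{T}_{m_2, l_2+1}^\delta$ is nonempty. The condition $\delta > 1$ enters here, since for $\delta = 1$ the ``star" $S_\delta$ degenerates to a single vertex and the combining operation provides no slack. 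A secondary obstacle is verifying that Hypothesis~\ref{hypothesis} truly carries through the merge; this I expect to reduce to combining the ordered vertex sequences supplied by Hypothesis~\ref{hypothesis} for $T_1$ and $T_2$, placing the identified vertex at the appropriate position in the combined ordering. Once these two points are handled, the existence statement follows directly.
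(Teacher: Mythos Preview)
Your Case~1 is fine and matches the paper. The gap is in Case~2: your primary plan of splitting $(m,l)$ so that \emph{both} pieces satisfy $t_i < l_i$ is arithmetically impossible. With $m = m_1 + m_2$ and $l+1 = (l_1+1)+(l_2+1)$ one has
\[
t = m - 1 - (l+1)\delta = t_1 + t_2 + 1,
\]
so $t_1 \le l_1 - 1$ and $t_2 \le l_2 - 1$ force $t \le (l_1 + l_2 - 2) + 1 = l - 2 < l$, contradicting $t \ge l$. Thus at least one piece must carry $t_i \ge l_i$, and your ``glue two Case~1 embeddings'' strategy cannot get off the ground. Your parenthetical fallback (``or into a strictly smaller instance already handled'') would salvage an inductive scheme, but then you still need a lemma saying that combining a tree satisfying Hypothesis~\ref{hypothesis} with an \emph{arbitrary} tree (not one already known to satisfy the hypothesis) yields a tree satisfying the hypothesis---and this is exactly the nontrivial content you have not supplied.

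This is precisely what the paper does, without induction. Lemma~\ref{lem: making embeddable tree} shows that if $T_1 \in \mathcal{T}_{m_1,l_1+1}^{\delta}$ satisfies Hypothesis~\ref{hypothesis} and $T_2 \in \mathcal{T}_{m_2,l_2+1}^{\delta}$ is \emph{any} tree, then one can add a single edge $e$ from a carefully chosen vertex $w$ in the larger partite set of $T_1$ to a vertex of $A_2$ so that $T = T_1 \cup T_2 + e$ still satisfies the hypothesis. The proof works at the level of the set $I \subset J'$ (not by gluing embeddings): one locates $w$ via a small structural claim about leaves in $T_1^{J'}$, and checks that the witnessing $I$ for $T_1$ (or a single vertex of it) survives as a witness for $T$. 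The theorem then follows in one step: pick $l_1 \in [1,l]$ and $m_1$ with $(l_1+1)\delta+1 \le m_1 < (l_1+1)(\delta+1)$ so that $T_1$ is in the $t_1 < l_1$ regime, set $m_2 = m - m_1$, $l_2 = l - 1 - l_1$, verify $\mathcal{T}_{m_2,l_2+1}^{\delta}$ is nonempty, and apply the lemma.
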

	
	Note that this result applies to all $m,l,\delta$ regardless of the relation between $t$ and $l$.

	\subsection{Basic terminology and notation}
	\label{sec: notation}
	
	We give a summary of basic notation used throughout the paper. Otherwise, we follow standard graph theory terminology as given in~\cite{west2001introduction}. 
	
	Given a graph $G$ with $n$ vertices, the \emph{adjacency matrix} $A(G)$ of $G$ is an $n \times n$ matrix where the $(i, j)$-entry is equal to 1 if vertices $i$ and $j$ are adjacent, and 0 otherwise. Let $\lambda_1 \geq \lambda_2 \geq \cdots \geq \lambda_n$ be all eigenvalues of $A(G)$. The \emph{spectral radius} of $G$ is the largest eigenvalue of $A(G)$, denoted by $\lambda(G)$. Hence, $\lambda(G) = \lambda_1$.
	
	The Perron-Frobenius theorem for nonnegative matrices implies that the adjacency matrix $A(G)$ of a graph $G$ has a positive eigenvector $\mathbf{x} = [x_1, x_2, \ldots, x_n]$ corresponding to $\lambda(G)$. This eigenvector is called the \emph{normalized Perron vector} of $A(G)$ when it is normalized by making the largest entry of $\mathbf{x}$ equal to 1. A Perron entry for a vertex $v$, denoted by $x_v$ in the graph $G$, is $x_i$, where $i$ is the index corresponding to vertex $v$ in $A(G)$. We will call $\x_v$ the \emph{Perron weight} of $v$.
	
	Given a graph $G=(V,E)$,  $V'\subseteq V$, and  $E'\subseteq E$, we denote by $G[V']$, the {\em induced}\ subgraph of $G$ with vertex set given by $V'$. For any $U,W \subseteq V(G)$, let $E_{G}(U,W)$ denote the set of edges of $G[U\cup W ]$ that have one end point in $U$ and the other endpoint in $W$. We also define $e(U,W)=|E_{G}(U,W)|$. We define the graph $G[U,W]=(U\cup W, E_G(U,V))$.
	
	The degree of a vertex $v$ in the graph $G$, $d_G(v)$, is the number of edges incident to $v$ in $G$. We will drop $G$ from the subscript when the graph $G$ is clear from the context. A graph $G$ is called \emph{$k$-regular} if every vertex in the graph has the same degree $k$. An \emph{almost-regular graph} in this paper is a graph in which every vertex has the the same degree except one vertex whose degree differs by at most 1 from the degree of the rest of the vertices.

	The distance of two vertices $u$ and $v$ in a graph $G$ is denoted as $dist(u,v)$. For a vertex $v$ in a graph $G$, we use $N_G(v)$ to denote the neighbors of $v$ in $G$ and for a set $V\subset V(G)$, we define $N_G(V):= \cup_{v\in V} N_G(v)$. If the graph is clear from the context, we may drop the subscript and use $N(v)$. Similarly, we use $N_i(v)$ to denote the set of the vertices at distance $i$ from $v$ in $G$.

	Further, for any set of vertices $B$, let $B_i(v):= B \cap N_i(v)$. When the vertex $v$ is clear from context, we will also drop it from the notation we have just introduced for clarity. For example, for some vertex $v$ and set $L$, we routinely use $L_2$ to denote the vertices $L$ that are distance $2$ from $v$.

	\section{Tree embedding results and constructions} \label{sec: embeddings}

	\subsection{Proof of Theorem~\ref{thm: max degrees is G[R]}}
	\label{sec: proof of 1st embedding}

	\begin{proof}
		For the first part, we consider an arbitrary tree $T$ in $\mathcal{T}_{m,l + 1}^{\delta}$, and suppose $A$ and $B$ are its partite sets where $|A|=l+1$ and let $v\in A$ be a vertex for which $d_T(v)=\delta$. Let $v^*$ be a vertex in $H_2$ with degree $d_{H_2}(v^*)$ more than $\delta-1$. Now we embed vertex $v$ to vertex {$v^*$}, and embed vertices of $N_T(v)$ to arbitrary vertices of  $N_{H_2}(v^*)$. We also embed vertices of $B\setminus N_T(v))$ to arbitrary vertices of remaining vertices in $H_2$. Since $\delta = |N_T(v)|\leq| N_{H_2}(v^*)|$, and  $n \ge m$ this embedding so far is possible. Now let us embed vertices of $A-v$ to vertices in $H_1$. Since all the vertices of $H_1$ are adjacent to all the vertices in $H_2$ and $|A-v|=|V(H_1)|=l$ this is possible, therefore there is a copy of $T$ in $H$.
		
		For the second part, 
		assume to the contrary that there is a copy of $T$ in $H$ for some $H_2$ with $\Delta(H_2) \le \delta - 1$ and at most one vertex of degree $\delta -1$. Note that since $|A| = |V(H_1)| + 1$, so there must be at least one vertex of $A$ that is mapped into $H_2$. Let $P$ be the subset of vertices of $A$ that are mapped into $H_2$ in a copy of $T$ in $H$, and $|P|= p$, for some $1 \le p \le l+1$. 
		Now, $|N_T(P)| \ge p\delta - (p-1) = p(\delta - 1) + 1$. Since we have $\Delta(H_2) \le \delta-1$ and there is at most one vertex of degree $\delta - 1$ in $H_2$, one could map at most $\delta - 1 + (p-1)(\delta - 2) = p(\delta-2) + 1$ vertices of $N_T(P)$ into $V(H_2)$. Thus, at least $p(\delta - 1) + 1 - (p(\delta-2) + 1) = p$ vertices of $N_T(P)$ are mapped into $V(H_1)$. Since $|V(H_1)| = l$ this implies that $p \le l$. Therefore, there are $l+1-p > 0$ vertices of $A$ that must be injectively mapped into at most $l - p$ vertices of $V(H_1)$. This is not possible, giving a contradiction and there can be no copy of $T$ in $H$.  
	\end{proof}

	\subsection{Proof of Theorem~\ref{thm: tree embedding}}\label{sec: proof of 2nd embedding}

	Throughout this section we will use the notation established in the theorem statement.
	Now we will define additional notation to be used throughout the section. 
	
	\begin{definition}\label{def: ti}
		For a given tree $T\in \mathcal{T}_{m,l+1}^{\delta}$, let $A$ be the set of vertices in the smaller partite set. We denote the vertices of $A$ as $v_i$, so $A=\{v_1, v_2,..., v_{l+1}\}$. And, we define $t_i:= d_T(v_i)-\delta$ for $i=1,\ldots, l+1$.
	\end{definition} 
	
	We can think of $t_i$ as the ``excess" degree of the vertex $v_i$ beyond $\delta$. For each $v_i\in A$, we have $0\leq t_i$ since the minimum degree of vertices in $A$ is $\delta$. 
	Note that $\sum_{v_i\in J}t_i=m-1-(l+1)\delta$, which equals $t$ as defined in Remark~\ref{rem: t}.

	This leads to a core definition at the center of our statements and arguments. We will define $J'$ such that it contains all vertices with positive ``excess'' degree $t_i$, and every vertex in $A\setminus J'$ is at distance two of exactly one vertex in $J'$. This property of $J'$ is necessary for our procedure to embed $T$ into $\overline{K_l} \vee m S_{\delta}$.
	
	\begin{definition}\label{def: J'}
		For a given tree $T\in \tmld$,  let us define the set of vertices $J:=\{v_i\in A: t_i>0\}$. Starting from $J$, we iteratively define $J'$ as follows:
		
		\begin{enumerate}
			
			\item Let $J_1$ consist of all the vertices of $J$ along with all those vertices of $A\setminus J$ that are the interior vertices of a path of length more than 2 with endpoints in $A$. 
			\item Let $J_2$ be the unique smallest set containing $J_1$ such that no two vertices in $A \setminus J_2$ have a common neighbor. 
			\item Let $J'$ be a smallest set containing $J_2$ such that any vertex in $A\setminus J'$ does not have common neighbors with two or more vertices of $J'$. Note that $J'\subset A$.
		\end{enumerate}  
	\end{definition} 
	
	\begin{example}
		\label{ex for J'}
		See Figure~\ref{J'} for an illustration of the procedure given above for a specific tree with $\delta=3$, $l=11$, and $m=41$. The tree consists of black vertices which represent vertices of the set $A$ and white vertices for the remaining vertices of the tree. $J$ consists of black vertices with degree more than $\delta=3$. Next we add vertices $v_7$, $v_8$ and $v_9$ to the set $J$ to form $J_1$. In the figure this is visualized by moving the new vertices to the same level as vertices of $J$ (note that the tree is fixed and does not change). Next we choose a  vertex from $\{v_{10},v_{11}\}$ alongside the vertices of $J_1$ to obtain the set $J_2$. Finally we add vertices $v_5$ and $v_{10}$ to finish the procedure for defining $J'$. 
	\end{example}
	
	\begin{figure}
		\centering
		\resizebox{0.69\textwidth}{!}{\input{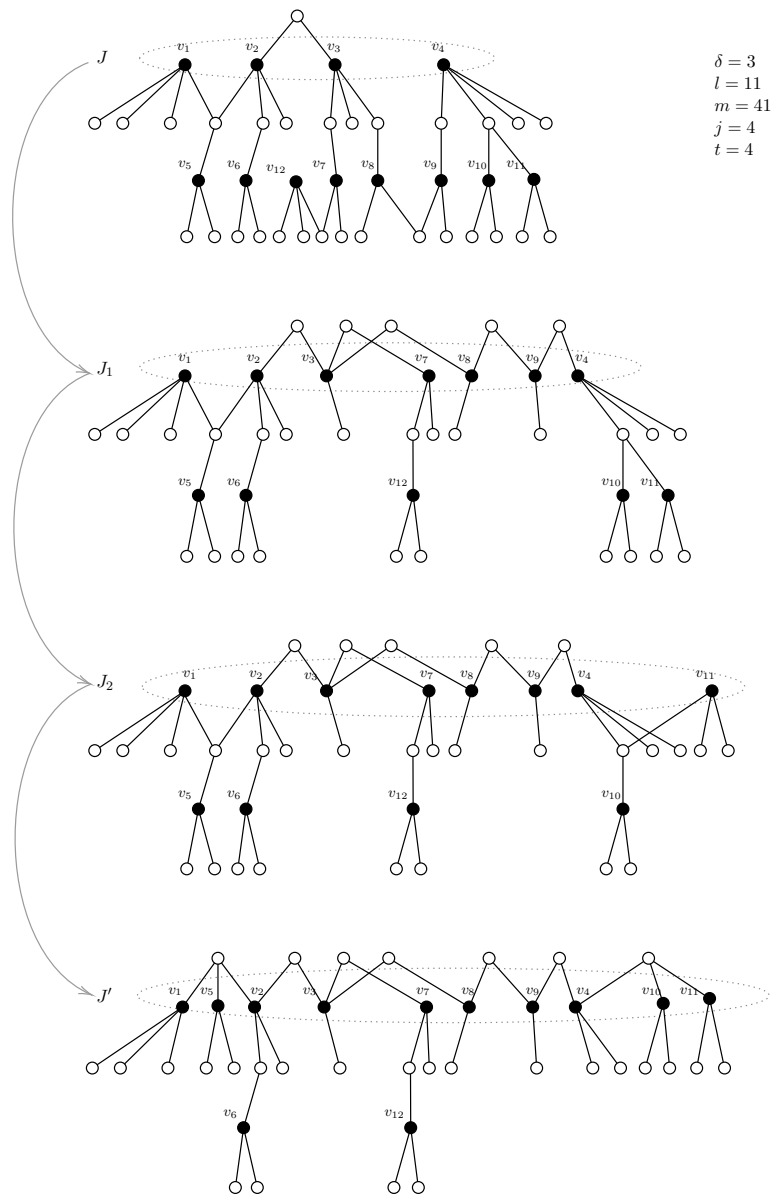}}
		\caption{An illustration of the procedure for finding $J'$.}
		\label{J'}
	\end{figure}

	Note that for the vertices in $J'$ we have  $t_i>0$  and for the vertices in $J'\setminus J$ we have $t_i=0$. For each $v\in A\setminus J'$, there is a unique vertex $v_i$ of $A$ at distance two from $v$.
	
	The set $J'$ will be central to our arguments in this section. We will now define some notation related to $J'$ that will be used in the remainder of the section. 
	
	\begin{definition}\label{def: ai}
		Let $j'=|J'|$. For each $v_i\in J'$, let $A_i$ be the set of vertices of $A\setminus J'$ that are at distance 2 from $v_i\in J' $ and let $a_i=|A_i|$. For a vertex $v=v_i\in A$ we define $t_v=t_i$, $A_v=A_i$ and $a_v=a_i$. 
	\end{definition}
	
	By the definition of $J'$, $A_i$'s partition $A\setminus J'$ and therefore $A_i$'s are pairwise disjoint and we have $\sum_{v_i\in J'} a_i = l +1- j' $. This property of $J'$ will help us in proving necessary lemmas.
	
	\begin{definition}\label{def: TI}
		For any $I\subset J'$, let $T^I$ be the induced subgraph of $T$ that has all the vertices of $I$ and the vertices of all the paths of length 2 from $I$ to $I$ (that start at a vertex in $I$ and end at a vertex in $I$). 
	\end{definition}
	See Figure~\ref{TJ} for an example of a tree $T$, the subset $J'$ of the vertices of $T$, and the corresponding subtree $T^{J'}$.

	We also know that for every $v_i\in T^{J'}$, the set of edges incident to $v_i$ in the induced subgraph $T[\{v_i\} \cup  (N_T(A_i))\cap N_T(v_i)]$ are disjoint, since $A_i\subset A\setminus J'$ . Therefore the total number of these edges must be less than or equal to the degree of $v_i$ in $T$, which equals $t_i+\delta$, so:

	\begin{equation}
		a_i +d_{T^{J'}}(v_i) \leq t_i + \delta.
		\label{e_deg}
	\end{equation}
	
	Note that $T^I$ is a forest but not necessarily a tree. In the following, we will be working with subsets $I\subseteq J'$ where $T^{I}$ is in fact a subtree of $T$. We will be identifying one of the vertices of the $T^{I}$ as its root $r$. 
	Then, for a vertex $v$ of the $T^{I}$ with the root $r$, we define $T^{I}_v$, to be the subtree of $T^{I}$ that contains $v$ and all vertices $w$ such that the path between $w$ and $r$ contains $v$. See Figure~\ref{TIV} for an example of a rooted tree $T^I$ with the given vertex $v$ and the corresponding subtree $T^I_v$.
	
	\begin{figure}[ht]
		\centering
		\resizebox{0.85\textwidth}{!}{\input{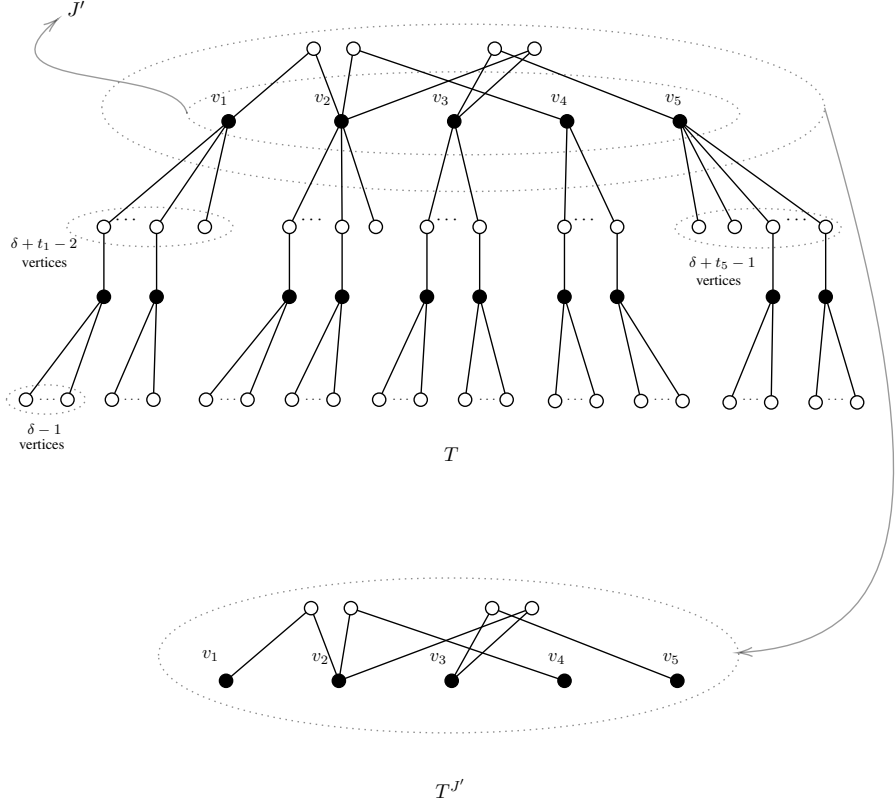}}
		\caption{An example of a tree $T$ and the corresponding subtree $T^{J'}$}
		\label{TJ}
	\end{figure}
	
	\begin{figure}
		\centering
		\resizebox{0.35\textwidth}{!}{\input{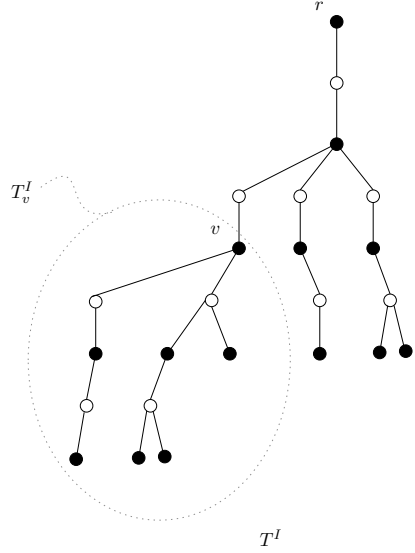}}
		\caption{A rooted tree $T^I$ with the given vertex $v$ and the corresponding subtree $T^I_v$}
		\label{TIV}
	\end{figure}

	We are now ready to state the hypothesis that is essential for our embedding arguments to work. We will show that any tree $T \in \tmld$ that satisfies this hypothesis, can be embedded in  $\overline{K}_l \vee m S_{\delta}$.
	
	Let $H_1$ be the copy of $\overline{K}_l$ and $H_2$ be the copy of $mS_{\delta}$ in $\overline{K}_l \vee m S_{\delta}$.
	In our embedding procedure, first for some set $I\subset J'$, we embed the tree $T^{I}$  such that vertices of $I$ are embedded into $H_2$ and the rest of vertices of $T^I$ into $H_1$. Then for each $v_i\in I$ we embed the vertices of $A_i$ into $H_2$. Then we embed the common neighbor of each vertex of $A_i$ and $v_i$ for all $v_i\in I$ to  $H_1$. Then we embed some neighbors of each  $v_i\in I$ into  $H_1$ such that
	the total number of vertices embedded into $H_1$ are $1+\sum_{v_i\in I}t_i$. Note that we have only embedded vertices of $V(T)\setminus A$ so far.
	To guarantee that we can embed the rest of vertices of $A$ into  $H_1$, we need the number of vertices embedded into  $H_1$ to be less than the number of vertices of $A$ that are embedded into  $H_2$ which is $\sum_{v_i\in I} a_i+ |I|$. This ensures that we have enough vertices left in  $H_1$ for the remaining vertices of $A$. This means we need the following inequality to hold for $I$,
	$$\sum_{v_i\in I} a_i+ |I|\geq 2+\sum_{v_i\in I}t_i,$$
	
	which is exactly the condition~\ref{eq:hypthesis} below.
	
	\begin{hypothesis}
		\label{hypothesis}
		
		Given $T\in \tmld$ and a non-empty set $I\subset J'$, we say $T$ satisfies the hypothesis with $I$ if $T^I$ is a tree and 
		
		\begin{equation}\label{eq:hypthesis}
			\sum_{v_i \in I} a_i \geq 2 +  \sum_{v_i \in I} (t_i - 1).
		\end{equation} 
		
		In addition, if $|I|>1$, then we also require for every $v_i\in I$ that $a_i\leq t_i$, and  $N_T(T^I\setminus J')=I$.

	\end{hypothesis}
	
	We will simply say $T$ satisfies Hypothesis~\ref{hypothesis} if there is a non-empty set $I\in J'$ where $T$ satisfies Hypothesis~\ref{hypothesis} with $I$.
	Also,  we say $I$ satisfies Property 2 if inequality (2) holds when the summation is taken over the vertices of $I$.
	
	Now we are ready to prove Fact~\ref{fact}.

	\begin{customfact}{\bf \ref{fact}}
		Let $T$ be a tree such that $T\in \tmld$ with $t<l$. Then $T$ satisfies Hypothesis~\ref{hypothesis}.
	\end{customfact}
	\begin{proof}
		
		If $J'$ has a vertex $v$ such that $a_v>t_v$ then $T$ satisfies Hypothesis~\ref{hypothesis} with $I=\{v\}$. 
		Now assume for every vertex $v_i\in J'$ we have $a_i\leq t_i$. Since $t<l$,
		\begin{equation*}
			\sum_{v_i \in J'} a_i  =l+1-j' \geq  2+t-j'=2 +  \sum_{v_i \in J'} (t_i - 1).
		\end{equation*}
		
		Since $T^{J'}$ is a tree and $N_{T^{J'}}(T^{J'}\setminus J')=J'$, so $T$ satisfies Hypothesis~\ref{hypothesis} with $I=J'$.
	\end{proof}
	
	The following lemma is the key to finding an embedding of $T$ in $\overline{K}_l \vee m S_{\delta}$.
	
	\begin{lem}
		\label{lem: there is I for T'}
		Let $T\in \tmld$ and $I\subset J'$ with $|I|>1$ such that $T$ satisfies Hypothesis~\ref{hypothesis} with $I$.
		Then there exists a non-empty set of vertices $I'\subset J'$, where $T$ satisfies Hypothesis~\ref{hypothesis} with $I'$, and for each $v_i\in I'$ we have $a_i\leq t_i+1-d_{T^{I'}}(v_i)$.
	\end{lem}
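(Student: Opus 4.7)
The plan is to induct on $|I|$. The base case $|I|=2$ is essentially automatic: since $T^I$ is a tree in which every middle vertex has at least two neighbors in $I$, it must reduce to a single path $v_1-w-v_2$, so both vertices of $I$ have degree $1$ in $T^I$, and the desired bound $a_i\le t_i+1-d_{T^I}(v_i)=t_i$ is just the inequality $a_i\le t_i$ guaranteed by Hypothesis~\ref{hypothesis}.

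For the inductive step, I write $s_i:=t_i-a_i\ge 0$, so that Hypothesis~\ref{hypothesis} becomes $\sum_{v_i\in I}(1-s_i)\ge 2$ and the stronger conclusion reads $d_{T^{I'}}(v_i)\le 1+s_i$. If $I$ already satisfies the latter, take $I'=I$. Otherwise, fix a violating $v_j\in I$ (so $d_{T^I}(v_j)\ge 2+s_j$), root $T^I$ at $v_j$, let $w_1,\dots,w_k$ be its middle-vertex children with associated rooted subtrees $S_1,\dots,S_k$, and set $I_i:=S_i\cap I$ and $\sigma_i:=\sum_{v_r\in I_i}(1-s_r)\in\mathbb{Z}$. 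Each $I_i$ is nonempty because $w_i$, being a middle vertex of $T^I$, has at least one child in $I$. The hypothesis applied to $I$ yields $\sum_{i=1}^k\sigma_i\ge 1+s_j=:r$, while the choice of $v_j$ gives $k\ge r+1$. A short integer pigeonhole argument then shows that the top $r$ values among the $\sigma_i$ must sum to at least $r$: if not, the $r$-th largest would be at most $(r-1)/r<1$, hence $\le 0$, forcing every later $\sigma_i$ to be $\le 0$ and the total to be $\le r-1$, a contradiction.

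Let $T^*\subseteq\{1,\dots,k\}$ index such a top-$r$ selection and set $I'':=\{v_j\}\cup\bigcup_{i\in T^*}I_i$. The identity $N_T(T^I\setminus J')=I$ together with the rooted tree structure of $T^I$ forces $N_T(w)\subseteq\{v_j\}\cup I_i$ for every middle vertex $w\in S_i$, so in forming $T^{I''}$ the middle vertices in the kept subtrees retain all their $T$-neighbors (already in $I''$), while middle vertices in the discarded subtrees have at most $v_j$ as a neighbor in $I''$ and are dropped. This shows simultaneously that $T^{I''}$ is the subtree of $T^I$ obtained by pruning the discarded branches, that $N_T(T^{I''}\setminus J')=I''$, that the inequalities $a_i\le t_i$ are inherited from $I$, and that the main inequality becomes $(1-s_j)+\sum_{i\in T^*}\sigma_i\ge 2$. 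Finally $|I''|\ge 1+r\ge 2$, and $|I''|<|I|$ since $r<k$ and each $I_i$ is nonempty, so the inductive hypothesis applied to $I''$ yields the required $I'$.

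The main obstacle I foresee is verifying $N_T(T^{I''}\setminus J')=I''$ in lockstep with the hypothesis inequality for $I''$: this coupling is what dictates taking a coherent rooted top-$r$ subtree selection (rather than an arbitrary subset of $I$), and it is the integer pigeonhole on the $\sigma_i$'s that ties the two requirements together so that both the degree bound at $v_j$ and the hypothesis inequality survive the pruning at the same time.
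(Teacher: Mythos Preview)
Your proof is correct and proceeds by a genuinely different route from the paper's. The paper fixes a root $r$ of $T^I$ once and for all and runs an iterative pruning procedure: at each stage it locates the violating vertex $u$ \emph{farthest} from $r$, deletes those subtrees below $u$ on which the (reversed) inequality $\sum a_i\le\sum(t_i-1)$ holds, and only if $u$ remains violating does it restrict to the subtree $T^{I'}_u$ and keep exactly $s=t_u-a_u+1$ of its branches (each of which, after the first deletion, already satisfies $\sigma_i\ge 1$). By choosing $u$ farthest from $r$, every descendant of $u$ is already non-violating, so the procedure terminates immediately after Step~2. Your argument instead re-roots at a violating vertex $v_j$ and invokes a clean integer pigeonhole to extract $r=1+s_j$ branches with $\sum\sigma_i\ge r$ in one stroke, then hands the smaller set $I''$ to the inductive hypothesis.

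Both arguments ultimately exploit the same integrality of the $\sigma_i$, but yours packages the branch selection more elegantly (you never need to separate ``bad'' from ``good'' subtrees or track a farthest vertex), at the cost of re-rooting at each inductive step. The paper's version is more explicitly algorithmic---which is why they can illustrate it with a worked example---and also pins down that the final $I'$ is obtained by successive prunings of a single rooted tree. Your verification that $N_T(T^{I''}\setminus J')=I''$ is exactly the delicate point, and your use of $N_T(T^I\setminus J')=I$ to force $N_T(w)\subseteq\{v_j\}\cup I_i$ for each middle vertex $w\in S_i$ handles it correctly; the paper checks the analogous condition at each pruning step in the same way.
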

	
	\begin{proof}

		We will prove this Lemma by a step-by-step procedure that finds the subset $I'\subset I$ that satisfies Property~ \ref{eq:hypthesis} and $T^{I'}$ is a tree such that $N_T(T^{I'}
		\setminus I')=I'$ and for each $v_i\in I'$ we have $a_i\leq t_i+1-d_{T^{I'}}(v_i)$. Note that since for any $v_i\in I$  we have $a_i\leq t_i$ therefore for any $v_i\in I'$  we have $a_i\leq t_i$ since $I'$ is a subset of $I$.

		We first choose a vertex $r\in I$ as the root for $T^{I}$.
		Let $I'=I$. In each step we update $I'$ such that $I'$ still satisfies Property~\ref{eq:hypthesis} and $T^{I'}$ is a tree.
		
		\emph{Step 0}.
		If for all vertices $v_i\in I'$, we have that $a_i\leq t_i+1-d_{T^{I'}}(v_i)$. We terminate the procedure and output this $I'$.
		
		Else, go to Step 1.
		
		\emph{Step 1}.
		Let $C=\left\{ v\in I': a_v> t_v+1-d_{T^{I'}}(v) \right\}$. The set $C$ is not empty. Now let $u$ be a vertex in $C$ at maximum distance from $r$ in $T^{I'}$.
		
		Let $W$ be the set of children $w$ of $u$ such that for all $w \in W$, we have
		\begin{equation} 
			\label{eq:ineqality fo children}
			\sum_{v_i \in I_w} a_i \leq  \sum_{v_i \in I_w} (t_i - 1),
		\end{equation}
		where $I_w=J'\cap V(T^{I'}_w)$.
		
		Note that for $I'$
		we have 
		$$\sum_{v_i \in I' } a_i \geq 2 +  \sum_{v_i \in I'}(t_i - 1).$$
		Hence, for the tree $ T^{I'}\setminus \bigcup_{w\in W}T^{I'}_w$, we have $J'\cap V(T^{I'}\setminus \bigcup_{w\in W}T^{I'}_w)=I'\setminus \bigcup_{w\in W}I_w$ so
		
		$$\sum_{v_i \in I'} a_i-\sum_{w\in W}\sum_{v_i \in I_w} a_i  \geq 2 +  \sum_{v_i \in I'}(t_i - 1)-\sum_{w\in W}\sum_{v_i \in I_w} (t_i - 1), \text{ that is, }$$
		
		\begin{equation*}
			\sum_{v_i \in I'\setminus \bigcup_{w\in W} I_w} a_i \geq 2+ \sum_{v_i \in I' \setminus \bigcup_{w\in W} I_w} (t_i - 1).
		\end{equation*}

		So by removing the subtrees $T^{I'}_w$ for $w\in W$ from $T^{I'}$, we have a subtree of $T^{I'}$  where $I'\setminus (\bigcup_{w\in W} I_w)$ is the subset of vertices of this subtree lying in $J'$.
		Now, update $ I'\leftarrow I'\setminus (\bigcup_{w\in W} I_w)$. $I'$ satisfies Property~\ref{eq:hypthesis}. Note that $T^{I'}$ is a tree and $N_T(T^{I'}
		\setminus I')=I'$ since $N_T(T^I\setminus J')=I$ and $N_{T^I}(T^{I'}\setminus I')=I'$.  
		
		If  $a_{u} \leq t_{u}+1-d_{T^{I'}}(u)$ go to Step 0. Else, go to Step 2.
		
		\emph{Step 2}. Now $a_{u} > t_{u}+1-d_{T^{I'}}(u)$.
		
		Let $u$ have $q$ children $w_1,....,w_q$  in  $T^{I'}_{u}$, meaning $d_{T^{I'}_u}(u)=q$. Let $I_{w_i}=I'\cap V(T^{I'}_{w_i})$.
		
		If $u\neq r$. we have $d_{T^{I'}}(u)=q+1$. 
		
		Let $s=-a_{u}+t_{u}+1$. Since $a_{u} > t_{u}+1-q-1$ 
		and $a_{u} \leq t_{u}$, we have $1\leq s\leq q$.

		If $u= r$,  we have $d_{T^{I'}}(u)=q$.
		
		Again, let $s=-a_{u}+t_{u}+1$. Since $a_{u} > t_{u}+1-q$ and $a_{u} \leq t_{u}$ we have $1\leq s\leq q-1$.
		
		In the rest of this step, we focus on $T^{I'}_{u}$.

		Now we update $I'$ as follows.
		
		First we restrict $I'$ to $T^{I'}_{u}$, the collection of vertices in the subtree rooted at $u$ that belong to $A$, the smaller partite set of $T$. That is, updating $I' \leftarrow I' \cap V(T^{I'}_{u}) $. Note that $T^{I'}$ is a tree and $N_T(T^{I'}
		\setminus I')=I'$ since $N_T(T^I\setminus J')=I$ and $N_{T^I}(T^{I'}\setminus I')=I'$.

		Next from the subtree $T^{I'}_{u}$ we remove some children of $u$ so that we only keep $s$ children, meaning, update 
		$ I'\leftarrow I' \setminus \bigcup_{s+1\leq i\leq q} I_{w_i}$.  Note that $T^{I'}$ is a tree and $N_T(T^{I'}
		\setminus I')=I'$ since $N_T(T^I\setminus J')=I$ and $N_{T^I}(T^{I'}\setminus I')=I'$.
		
		Therefore, $u$ in $T^{I'}_{u}$ has $s$ children, so we get $a_{u}= t_{u}+1-d_{T^{I'}}(u)$. For each of the remaining $s$ children $w_i$, we have
		\begin{equation*}
			\sum_{v_i \in I_{w_i}} a_i \geq 1+ \sum_{v_i \in I_{w_i}} (t_i - 1).  
		\end{equation*}
		Consequently,
		$$\sum_{v_i \in I'} a_i- a_{u} \geq s +
		\sum_{v_i \in I'} (t_i - 1) -(t_{u}-1),
		$$
		and
		$$ \sum_{v_i \in I'} a_i- a_{u} \geq -a_{u}+t_{u}+1 +
		\sum_{v_i \in I'} (t_i - 1) -(t_{u}-1).$$
		So, $$ \sum_{v_i \in I'} a_i\geq 2 +
		\sum_{v_i \in I'} (t_i - 1). $$
		Therefore, property~\ref{eq:hypthesis} holds for $I'$. Because $u$ was a farthest vertex from $r$ in $C$, so for all vertices $v_i\in I'$, we have that $a_i\leq t_i+1-d_{T^{I'}}(v_i)$.
		Go to Step 0, the procedure ends when we enter step 0.

		Note that every time we update $I'$ in Step 1, the degree of any vertex $u$ in $T^{I'}$ remains the same or decreases, while $a_u$ and $t_u$ remain unchanged. If any vertex is used as $u$ in Step 1, it will never reappear as $u$ for any other iteration following the loop back from Step 1 to Step 0. This is because $a_{u} \leq t_{u}+1-d_{T^{I'}}(u)$ by Step 1, and choosing $u$ again in Step 1 after going to Step 0, would have required that $a_{u} > t_{u}+1-d_{T^{I'}}(u)$, which is not the case. There are at most $l+1$ iterations therefore between Steps 1 and 0, since $|A| = l+1$. In addition we go to Step 2 at any point after that we go Step 0 and terminate the procedure since at the end of Step 2  for any  vertex $v_i\in I'$, we have that $a_i\leq t_i+1-d_{T^{I'}}(v_i)$.
		
		Since $I'$ has at most  $l+1$ vertices, this process will end within $l+1$ iterations.
	\end{proof}

	\begin{figure}
		\centering
		\resizebox{0.89\textwidth}{!}{\input{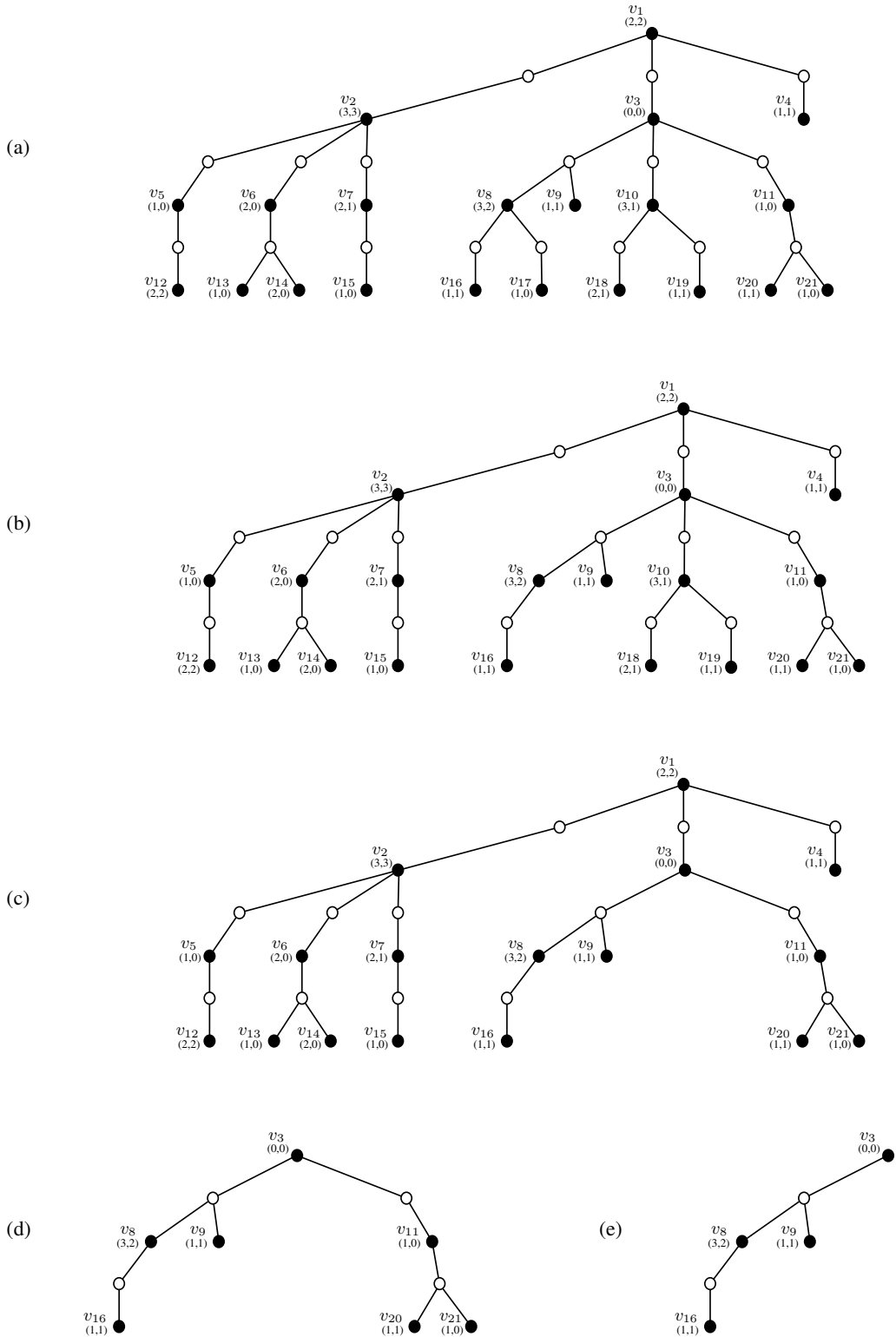}}
		\caption{An example illustrating the procedure in the proof of Lemma~\ref{lem: there is I for T'}}
		\label{fig: ex for lem}
	\end{figure}
	
	\begin{example}
		\label{ex for lem}
		In Figure~\ref{fig: ex for lem} we give an example illustrating the procedure in the proof of the Lemma~\ref{lem: there is I for T'}. 
		Let the graph in (a) be the $T^{J'}$ for $T\in \mathcal{T}_{m,l+1}^{\delta} $ where $m=185$, $l=37$ and $\delta=4$ and $t=32$. Note that since $t<l$, tree $T$ satisfies the hypothesis with $J'$ by Fact~\ref{fact} and its proof.
		The black vertices are vertices of $J'=\{v_1,v_2,\cdots,v_{21}\}$. Let the ordered pair beneath the names of vertices of $J'$ be $(t_i,a_i)$. Now we will see how the procedure in Lemma~\ref{lem: there is I for T'} works on $T^{J'}$.
		
		We choose vertex $v_1$ to be the root $r$. Then we start with $I'=J'=\{v_1,v_2,\cdots,v_{21}\}$. 
		In Step 0, since we have vertices in $I'$ with $a_i>t_i+1-d_{T^{I'}}(v_i)$ we do nothing and go to Step 1.
		In Step 1, we have $C=\{v_1,v_2,v_3,v_8\}$. The vertex $v_8$ has the maximum distance from $r=v_1$ so we choose $u$ to be $v_8$. Now we check the Inequality~\ref{eq:ineqality fo children} for each of the two children of $v_8$ and it follows that we need to remove the child of $v_8$ on the right in (a) and everything below that child, that is $v_{17}$ and its parent. We update $I'$ to $\{v_1,v_2,\cdots,v_{21}\}\setminus \{v_{17}\}$.
		
		The graph in (b) is the updated $T^{I'}$.
		Since $a_8 \leq t_8+1-d_{T^{I'}}$ in the graph in (b) we go to Step 0. Then since there are still vertices with  $a_i>t_i+1-d_{T^{I'}}(v_i)$ we go to Step 1.
		In Step 1, we get $C=\{v_1,v_2,v_3\}$. We choose $v_3$ as vertex $u$. We check  Inequality~\ref{eq:ineqality fo children} for each of the three children of $v_3$, and find that the middle child is the only one satisfying Inequality~\ref{eq:ineqality fo children}. We are now required to remove it and every vertex below it. We update $I'$ to be $(\{v_1,v_2,\cdots,v_{21}\}\setminus \{v_{17}\})\setminus\{v_{10},v_{18},v_{19}\}$. 
		
		The subtree $T^{I'}$ at this point is shown in (c).
		Since the vertex $v_3$ in the updated $T^{I'}$ still satisfies $a_3>t_3+1-d_{T^{I'}}(v_3)$, we need to go to Step 2. The vertex $v_3$ has 2 children in (c), we get that $q=2$ and $s=-a_3+t_3+1=1$. 
		
		The subtree $T^{I'}_{v_3}$ is shown in (d). 
		Then we update $I'$ to $I'=\{v_3,v_8,v_9,v_{11},v_{16}, v_{20}, v_{21}\}$.
		Since $s=1$ we only keep one of the children of $v_3$. We arbitrarily keep the left one and  we delete the right one and everything below it, as shown in (e). We update $I'$ to be $I'=\{v_3,v_8,v_9,v_{16}\}$. Now going to Step 0,  $a_i\leq t_i+1-d_{T^{I'}}(v_i)$ holds for  all vertices $v_i\in I'$. The procedure stops and we get $I'=\{v_3,v_8,v_9,v_{16}\}$ and $T^{I'}$ is the subtree in (e). This completes the example.
	\end{example}
	
	Now we are ready to complete the proof of Theorem~\ref{thm: tree embedding} by showing how to embed any tree $T\in \tmld $ that satisfies Hypothesis \ref{hypothesis} in $\overline{K}_l \vee m S_{\delta}$.

	\begin{proof}[Proof of Theorem~\ref{thm: tree embedding}]
		
		Let $H_1$ be the copy of $\overline{K}_l$ and $H_2$ be the copy of $m S_{\delta}$ in the given $\overline{K}_l \vee m S_{\delta}$.

		Since $T$ satisfies Hypothesis~\ref{hypothesis} therefore there exists some non-empty $I \subset J'$ such that $T$ satisfies Hypothesis~\ref{hypothesis} with $I$. Depending on whether $|I|=1$ or $|I|>1$ we have two cases.
		
		First let $I=\{v\}$. Therefore, by Property~\ref{eq:hypthesis} we have $a_v>t_v$. 
		We first embed the vertex $v$ to a vertex $u$ with degree $\delta-1$ in $H_2$ . Recall that $|A_v|=a_v$ and  $a_v\geq t_v+1$. 
		Let $A'_v$ be an arbitrary subset of $A_v$ of size $t_v +1$ and let $U$ be an arbitrary subset of vertices of degree $\delta-1$ in $H_2-u$ of size $t_v +1$. 
		We embed the vertices of $A'_v$ to vertices of $U$. 
		Then, we embed vertices of $N_T(v)\cap N_T(A'_v)$ to the vertices of $H_1 $. 
		Note that $|N_T(v)\cap N_T(A'_v)|=t_v+1$ which is at most $l$, the number of vertices in $H_1$ . 
		The rest of the vertices in $N_T(v)$ and $N_T(A'_v)$ are arbitrarily embedded to $N_{H_2}(u)$ and $N_{H_2}(U)$, respectively. Such an embedding is possible because $|N_T(v)\setminus N_T(A'_v)|=(t_v+ \delta)-(t_v+1)=\delta-1$ and $|N_T(A'_v)\setminus N_T(v)|=\delta-1$.
		
		We have already embedded $t_v+2$ vertices of $A$ and only $l+1-(t_v+2)$ vertices of $A$ remain to be embedded. Note that $l-(t_v+1)$ vertices of $H_1$ are still available. 
		We embed all of these remaining vertices of $A$ arbitrarily into the available vertices of $H_1$. It follows from the construction of $J'$ that the vertices in $A\setminus (A'_v\cup \{v\})$ do not have any neighbors in
		$N_T(v)\cap N_T(A'_v)$. So we can embed all the vertices in  $N_T(A\setminus (A'_v\cup \{v\}))\setminus (N_T(v)\cup N_T(A'_v))$ to the available vertices of $H_2$. This completes the embedding of $T$ in this case.

		Now we may assume that $T$ satisfies Hypothesis~\ref{hypothesis} with $I$, where $|I|>1$. By Lemma~\ref{lem: there is I for T'}, there is a non-empty set $I'$ such that $T$ satisfies Hypothesis~\ref{hypothesis} with $I'$ and for each $v_i\in I'$ we have $a_i\leq t_i+1-d_{T^{I'}}(v_i)$.
		
		Consider $T^{I'}$ to be rooted at an arbitrary vertex $r\in I'$. 
		Let $U$ be an arbitrary subset of size $|I'|$ of vertices of degree $\delta-1$ in $H_2$.
		First embed each $v_i \in I'$ to a vertex $u_i$ in $U$. 
		We order the vertices in $I'$ by their distance from $r$ in $T^{I'}$, breaking ties arbitrarily. 
		Starting at $r$ and proceeding stepwise to each vertex $v_i$ of $I$ in this order,
		we embed the children of $v_i$ (in $T^{I'}$) to arbitrary vertices of $H_1$. Note that, we start from children of $r$ and in the first step embed $d_{T^{I'}}(r)$ children to $H_1$. For the other vertices of $I'$, in each step, we embed $d_{T^{I'}}(v_i)-1$ children to $H_1$, while the parent of $v_i$ has already been embedded in a previous step.

		This completes the embedding of all the vertices of $T^{I'}$. Let $W_1$ be the set of vertices in $H_1$ used for this embedding. So $|W_1|=1+\sum_{v_i \in I'} (d_{T^{I'}}(v_i)-1)$.
		
		Next, we embed vertices of $A_i$ to arbitrary vertices with degree $\delta-1$ in $H_2\setminus U$.
		
		Let $W_2 \subset H_1\setminus W_1$ be an arbitrary subset of vertices of size $\sum_{v_i \in I'} a_i$.
		Then, for each $v_i \in I'$, embed $N_T(v_i)\cap N_T(A_i)$  to $W_2$.
		Next we will embed the vertices in $N_T(v_i)\setminus ((N_T(v_i)\cap N_T(A_i)) \cup N_{T^{I'}}(v_i))$ for each $v_i \in I'$.
		
		Note that $| (N_T(v_i)\cap N_T(A_i))\cup N_{T^{I'}}(v_i)|=|N_T(v_i)\cap N_T(A_i)|+|N_{T^{I'}}(v_i)|=a_i+d_{T^{I'}}(v_i)$. So $|N_T(v_i)\setminus ((N_T(v_i)\cap N_T(A_i)) \cup N_{T^{I'}}(v_i))|$ = $t_i+\delta-d_{T^{I'}}(v_i)-a_i$
		
		First, we embed $t_i+1-d_{T^{I'}}(v_i)-a_i$ vertices of $N_T(v_i)\setminus ((N_T(v_i)\cap N_T(A_i)) \cup N_{T^{I'}}(v_i))$ to $H_1$. 
		Since we have $a_i\leq t_i+1-d_{T^{I'}}(v_i)$ for each $v_i \in I'$, we know that  $t_i+1-d_{T^{I'}}(v_i)-a_i$ is non negative.

		Next, we embed the remaining neighbors (in $T$) of $v_i \in I'$ into the neighbors  of $u_i$ in $H_2$. Note that it is possible since there are exactly $\delta-1$ neighbors of $v_i$ that remain to be embedded.
		
		So, the number of vertices embedded into $H_1$ so far is equal to
		\[1+\sum_{v_i \in I'}(d_{T^{I'}}(v_i)-1)+\sum_{v_i \in I'} (a_i) + \sum_{v_i \in I'}( t_i+1-d_{T^{I'}}(v_i)-a_i)= 1 + \sum_{v_i \in I'} t_i,\]
		and all of these vertices are from $B$. 
		Also, the number of vertices from $A$ embedded into $H_2$ is equal to
		\[\sum_{v_i \in I'} a_i + |I'|.\] 
		Since $I'$ satisfies Property~\ref{eq:hypthesis}, $1 + \sum_{v_i \in I'} t_i<\sum_{v_i \in I'} a_i + |I'|$. So we can embed the remaining vertices of $A$, which are $l+1-\sum_{v_i \in I'} a_i-|I'|$ in number, into the remaining vertices of $H_1$ which are $l-1 - \sum_{v_i \in I'} t_i$ in number. This is possible since $l+1-\sum_{v_i \in I'} a_i-|I'|\leq l-1-\sum_{v_i \in I'} t_i$.
		
		Next, recall that for every $v_i \in I'$, we had embedded the vertices of $A_i$ into a collection of vertices of $H_2$ of degree $\delta - 1$. Vertices of $A_i$ have degree $\delta$ and so far we have embedded exactly one of the neighbors of each vertex in $A_i$ into some vertex in $H_1$. Now we embed the remaining $\delta - 1$ neighbors for any vertex $v$ in $A_i$ into the set of $\delta - 1$ vertices adjacent to the image of $v$ in $H_2$. 
		
		At the end, we embed the remaining vertices of $B$ into the available vertices of $H_2$, which is possible since $H_2$ has at least $m$ vertices.
		
		Observe that no vertices of $B$ embedded into $H_1$ are adjacent in $T$ to any vertex of $A$ that is also embedded into $H_1$. This is due to the way in which we obtained $I'$ and then embedded it into $\overline{K}_l  \vee m S_{\delta}$. In particular, the vertices of $B$ that got embedded into $\overline{K}_l $ belong to $T^{I'} \cup_{v_i \in I'} (N_T(v_i) \cap N_T(A_i))$ and the vertices of $A$ that got embedded into $\overline{K}_l $ belong to $A \setminus (I' \cup_{v_i \in I'} A_i)$. These vertices of $B$ are not adjacent to any vertices of $A \setminus (I' \cup_{v_i \in I'} A_i)$ since we have $N_T(T^{I'}\setminus J')=I'$.
		
		Thus, we have an embedding of $T$ into $\overline{K}_l  \vee m S_{\delta}$.
	\end{proof}
	
	Using the previous theorem and Fact~\ref{fact} we have the following corollary.
	\begin{cor}
		\label{cor: embeddability when t<l}
		Let $T$ be a tree such that $T\in \tmld$ with $t<l$. Then $T$ can be embedded into $\overline{K_l} \vee m S_{\delta}$.
	\end{cor}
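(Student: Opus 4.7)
The plan is to observe that this corollary is an immediate consequence of chaining the two preceding results. Specifically, under the assumption $t < l$, Fact~\ref{fact} supplies exactly the hypothesis that Theorem~\ref{thm: tree embedding} requires as its input, and Theorem~\ref{thm: tree embedding} then supplies the desired embedding as its output. So the proof is essentially a one-line chain: Fact~\ref{fact} $\Longrightarrow$ Hypothesis~\ref{hypothesis} holds for $T$ $\Longrightarrow$ Theorem~\ref{thm: tree embedding} applies $\Longrightarrow$ $T$ embeds into $\overline{K}_l \vee m S_\delta$.

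In more detail, the first step would be to invoke Fact~\ref{fact}: since $T \in \tmld$ with $t < l$, we conclude that $T$ satisfies Hypothesis~\ref{hypothesis}. No additional calculation is needed here; this is exactly the content of Fact~\ref{fact} (whose own proof exhibits the witnessing set $I$ as either $\{v\}$ for some $v \in J'$ with $a_v > t_v$, or else the full set $J'$).

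The second step would be to invoke Theorem~\ref{thm: tree embedding}: since $T$ satisfies Hypothesis~\ref{hypothesis}, $T$ can be embedded into $\overline{K}_l \vee m S_\delta$, which is precisely the statement of the corollary.

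There is no main obstacle to overcome. All of the substantive work has already been done elsewhere in the paper: the numerical argument in Fact~\ref{fact} that converts the inequality $t < l$ into Property~\ref{eq:hypthesis} for some $I \subseteq J'$, and the detailed embedding construction in the proof of Theorem~\ref{thm: tree embedding} that uses Lemma~\ref{lem: there is I for T'} to pass from a witnessing $I$ to a refined $I'$ and then explicitly embeds $T$ vertex-by-vertex into $\overline{K}_l \vee m S_\delta$. The corollary is purely a convenient packaging of these two statements for later application in the spectral arguments of Section~\ref{sec: spectral fixed tree}.
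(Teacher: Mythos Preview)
Your proposal is correct and matches the paper's approach exactly: the paper simply states that the corollary follows from combining Fact~\ref{fact} with Theorem~\ref{thm: tree embedding}, which is precisely the chain you describe.
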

	
	\begin{figure}[ht]
		\centering
		\resizebox{0.99\textwidth}{!}{\input{tree_preparation.tikz}}
		\caption{Setup for Example~\ref{ex for embedding}}
		\label{fig: ex for tree prepration}
	\end{figure}
	
	\begin{example}
		\label{ex for embedding}
		
		In Figure~\ref{fig: ex for tree prepration} and Figure~\ref{fig: ex for embedding} we give an example illustrating the procedure of the embedding in the proof of Theorem~\ref{thm: tree embedding}.
		
		Figure~\ref{fig: ex for tree prepration} part (a) shows a tree $T$ with $l=9$, $m=37$ and $\delta=3$.
		In part (b) we have $T^{J'}$ and in part (c), we have the $T^{I'}$ that the procedure of Lemma~\ref{lem: there is I for T'} gives us, starting at $I = J'$ (Since $t < l$, the proof of Fact~\ref{fact} gives us that $T$ satisfies Hypothesis~\ref{hypothesis} with $I = J'$).
		
		Now we show how the embedding of the tree $T$ based on the proof of the Theorem~\ref{thm: tree embedding} works.
		
		Recall that we are embedding $T$ into the graph $H_1\vee H_2$  where $H_1$ is a copy of $\overline{K}_l$ and $H_2$ is a copy of $m S_{\delta}$. This is shown in Figure~\ref{fig: ex for embedding} without the edges between $H_1$ and $H_2$, and without drawing all $m$ copies of $S_{\delta}$ in $H_2$. We use the gray colored vertices in the figure to show vertices that have not been used for the embedding yet.
		
		First we embed $T^{I'}$ in $H_1\vee H_2$ as shown in part (a). 
		
		In part (b) we embed vertices of $A_i$ to vertices of degree $\delta-1 =2$ in $H_2$, for each $v_i\in I'$ (these are the vertices $v_{10}, v_4$ and $v_3$). Next we embed the common neighbor of $v_{10}$ and $v_1$, which is $w_5$, to vertices of $H_1$. Similarly, we embed the common neighbor of $v_4$ and $v_8$, and that of $v_3$ and $v_2$.
		
		In part (c) we embed the remaining neighbors of the vertices of $A$ which have already embedded in $H_2$ (that is, we embed the remaining neighbors of the vertices of $I' \cup_{v_i \in I'}A_i $). 
		
		In part (d) we embed the rest of the vertices of $A$ to unused vertices in $H_1$, and we embed the leaves adjacent to these vertices of $A$ in $H_2$. 
		This completes the example.
	\end{example}
	
	\begin{figure}
		\centering
		\resizebox{0.85\textwidth}{!}{\input{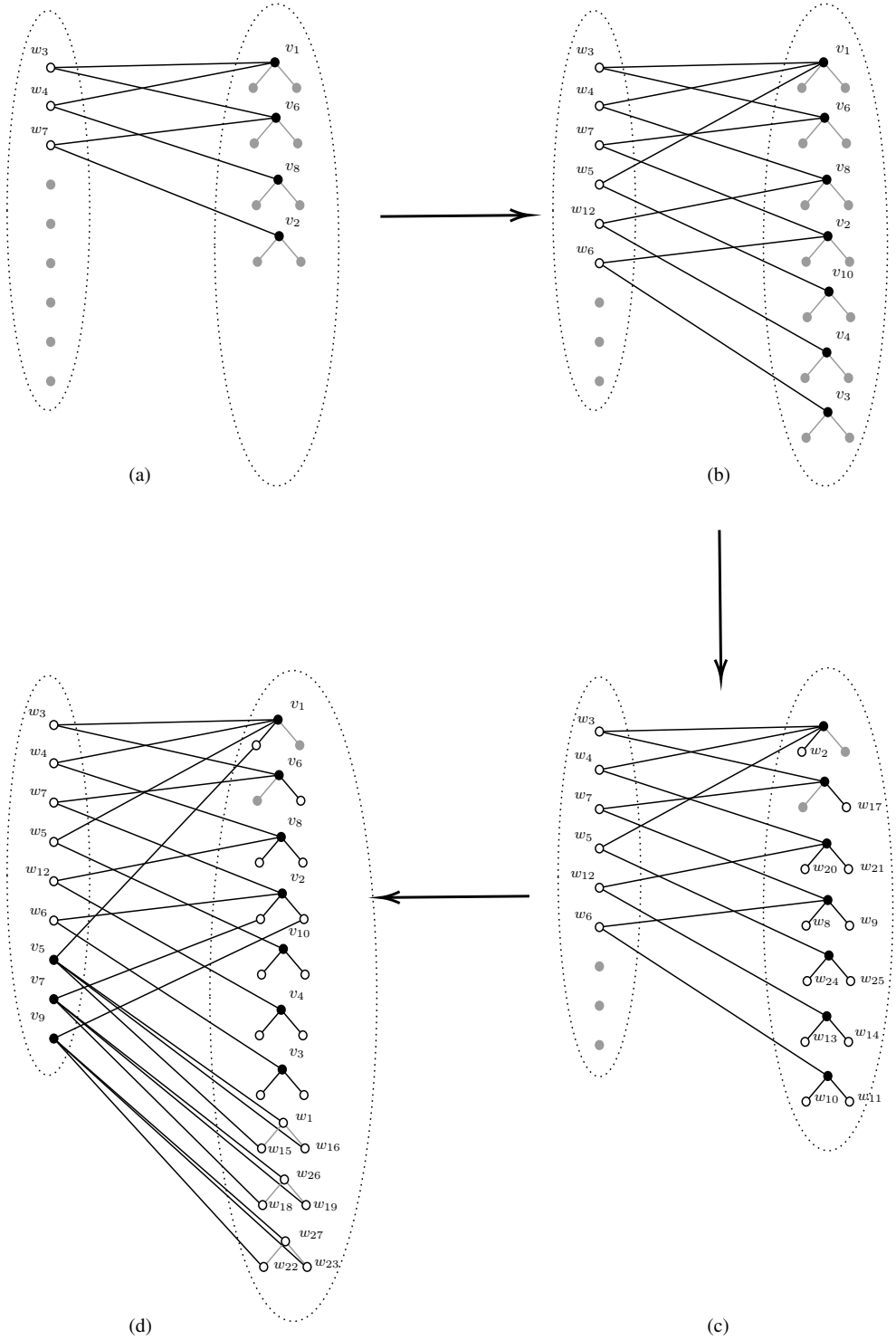}}
		\caption{An example illustrating the embedding in the proof of the Theorem~\ref{thm: tree embedding}}
		\label{fig: ex for embedding}
	\end{figure}

	\subsection{Trees embeddable in \texorpdfstring{$\overline{K}_l \vee m S_{\delta}$}{Tree embeddable}}
	\label{sec: ex for embedding}

	In this section, our aim is to illustrate the wide applicability of Theorem~\ref{thm: T-spex tighter bounds version} by describing many trees that are embeddable in $\overline{K}_l  \vee m S_{\delta}$. In the previous section, we showed that all $T\in \tmld$ that have $t<l$, or equivalently $m<(l+1)(\delta+1)$, can be embedded in  $\overline{K}_l  \vee m S_{\delta}$. First we show that there are many trees that satisfy this restriction $t <l$. Later, we will give a general construction for generating examples of $T \in \tmld$ that have $t \ge l$ and are still embeddable in  $\overline{K}_l  \vee m S_{\delta}$. 
	
	\begin{example} \label{ex: tmld construction}
		Given $m, l, \delta$, we wish to give a generic description of what a tree $T$ in $\tmld$ looks like when $t <l$. Suppose $T \in \tmld$ has the bipartition given by $\{A,B\}$ with $|A| \le|B|$. Then, $|A| = l+1$ and $|B| > (l+1)(\delta-1)$ (since $m-1 \ge (l+1)\delta$). The condition $t<l$ is equivalent to $|B| < (l+1)\delta$. By definition, $T$ has $l+1$ vertices in $A$. We describe $B$ as having two disjoint parts: $B'$ and $B''$ with $|B'|=b$ for any $b \in \{1, \ldots, l\}$ and $|B''|= m - b - (l+1)$. Vertices in $B'$ are made adjacent to vertices in $A$ in such a way that their neighborhoods overlap by at most one vertex in $A$ and $T[A\cup B']$ is connected. The vertices in $B''$ (which can number between $(l+1)(\delta -1) -b$ and $(l+1)\delta -b$) are partitioned into $l+1$ parts forming the disjoint neighborhoods in $B''$ of each vertex in $A$. To ensure the minimum degree $\delta$ of vertices in $A$, the size of any such neighborhood in $B''$ will be at least $\delta-1$ if the corresponding vertex in $A$ has one neighbor in $B'$, or at least $\delta-2$ if it has two neighbors in $B'$. Note that there is a lot of flexibility in the choice of $|B'|$, and in how the edges between $A$ and $B'$, and between $A$ and $B''$ can be chosen, which leads to many examples of trees with this overall structure. See Figure~\ref{fig: tmld construction} for some such trees. 
	\end{example}
	
	Trees in this figure, and more general examples, can also be described as lobster graphs. First, let us recall a few definitions. A \emph{caterpillar} graph is a tree that becomes a path when all its leaves are removed, and a \emph{lobster} graph is a tree that becomes a caterpillar when all its leaves are removed. Given $k \ge 1$, and $d_1, \ldots, d_k \ge 0$, $C_k(d_1, \ldots, d_k)$ denotes the family of caterpillars consisting of a path $P_k$ with $d_i$ leaves attached to the $i$th vertex for all $i\in [k]$. Note that paths, stars, double-stars, etc. can be described as caterpillars in terms of such a family.
	
	\begin{example}\label{ex:lobsters}
		Given any caterpillar $T'$, we can create a lobster $T$ from $T'$ by adding $d$ pendant edges to each leaf in $A$, where $A$ is the smaller partite set in the bipartition of $T'$, such that $t<l$ in $T$ regardless of what these values were in $T'$. We provide some simple conditions (without trying to be as general as possible) that guarantee $t<l$ in $T$. For $k$ even, let $d,d',d''$ be integers such that $1\le d \le d'' \le d' < d''+d$. Given any $T' \in C_k(d', d'', \ldots, d', d'')$, it is easy to check that $T$ has $t<l$ as $l=\frac{k}{2}(d''+1)-1$, $\delta = d+1$, and $t=\frac{k}{2}(d'-d+1)-1$. For $k$ odd, let $d,d',d''$ be integers such that $0\le d \le d'' \le d' \leq  d''+d$, and $d'' \le \lceil k/2 \rceil$. Given any $T' \in C_k(d', d'', \ldots, d')$, it is easy to check that $T$ has $t<l$ as $l=\lceil k/2 \rceil + \lfloor k/2 \rfloor d''-1$ (when $d>0$), $\delta = d+1$, and $t=\lceil k/2 \rceil(d'-d+1)-2$. Note that construction for odd $k$ subsumes other simpler families of trees with $t<l$ like odd paths ($d=d''=d'=0$),
		caterpillars based on odd paths ($d=d''=0$), etc. (see Figure~\ref{fig: tmld construction}).
	\end{example}

	\begin{figure}[ht]
		\centering
		\resizebox{0.8\textwidth}{!}{\input{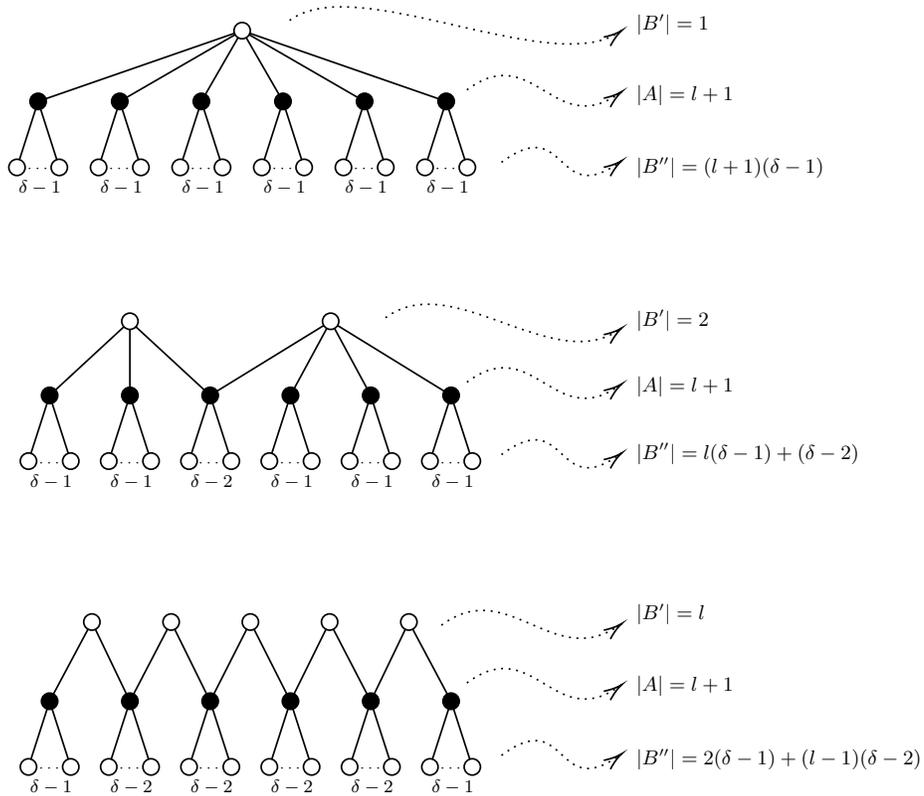}}
		\caption{Some trees in $\tmld$ with $t<l$ as described in Example~\ref{ex: tmld construction}. Note, as $t=0$ in the above examples, we can add up to $l-1$ more vertices in each of these examples while maintaining the $t <l$ condition.}
		\label{fig: tmld construction}
	\end{figure}
	
	We now give a general construction that allows us to ``combine'' two trees, $T_i \in \mathcal{T}_{m_i,l_i+1}^{\delta}$, to create a new tree $T\in \tmld$, with $m=m_1+m_2$ and $l=l_1+l_2+1$, that is embeddable in $\overline{K_l} \vee m S_{\delta}$. One of the two trees, say $T_1$, is required to satisfy $m_1<(l_1+1)(\delta+1)$, the sufficient condition for embeddability, but the other tree $T_2$ has no such restriction. By appropriate choices of $T_1$ and $T_2$, we can construct many embeddable trees including those that do not satisfy the condition $t<l$.

	Recall Remark~\ref{rem: tmld} where we gave a necessary condition, $m\geq \max\{2l+2, (l+1)\delta+1\}$, for the family $\tmld$ to be nonempty. In the next remark we show that this necessary condition is also sufficient by giving an appropriate construction that will be a useful ingredient for our general construction later.
	
	\begin{remark} \label{rem: nonempty family}
		The family of trees $\mathcal{T}_{m,l+1}^{\delta}$ is nonempty if and only if $m\geq \max\{2l+2, (l+1)\delta+1\}$  when $l>0$ and $m=\delta+1$ when $l=0$.
		
		If $l=0$, then the star $K_{1,\delta}$ has smaller partite set of size 1 and the minimum degree over vertices of smaller partite is $\delta$. 
		
		For $l>0$, we construct a tree in  consider $\mathcal{T}_{m,l+1}^{\delta}$. Let $A=\{v_1,....,v_{l+1}\}$ be a set of vertices of size $l+1$. Now we describe the set $B$ of vertices that forms a bipartition with $A$, as well as the edges in between $A$ and $B$. For each vertex $v_i$ in $A$ for $1\leq i\leq l+1$, add $\delta-1$ vertices adjacent to $v_i$  such that the neighborhoods of each $v_i$ are pairwise disjoint. Now we add a vertex adjacent to each $v_i$. To this point, we have $|A| = l+1$, $|B|=(l+1)(\delta-1) +1$, and number of edges equal to $(l+1)\delta$. Now we add $m -[(l+1)\delta+1]$ vertices adjacent to $v_1$ to the set $B$, which is possible since $m\geq (l+1)\delta+1$. The graph constructed is a tree $T$ with $m$ vertices and with bipartition $\{A, B\}$ and $|A| = l+1$, $|B|=m-(l+1)$, and number of edges equal to $m-1$. Since $A$ is of size $l+1$ and $l+1\leq m/2$, so $A$ is the smaller partite of $T$ and has $l+1$ vertices. Every vertex in the non-empty set $A\setminus \{v_1\}$ has degree $\delta$ and $v_1$ has degree at least $\delta$  and so the minimum degree over vertices of $A$, smaller partite set of $T$, is $\delta$. Therefore $T\in \mathcal{T}_{m,l+1}^{\delta}$.
		
		This completes the proof of this remark.
	\end{remark}
	
	The next lemma gives the general construction  method for ``combining'' two trees as mentioned earlier.
	
	\begin{lem}
		\label{lem: making embeddable tree}
		Suppose $T_1$ is a tree such that $T_1 \in \mathcal{T}_{m_1,l_1+1}^{\delta}$ and satisfies Hypothesis~\ref{hypothesis}, and $T_2$ is a tree such that $T_2 \in \mathcal{T}_{m_2,l_2+1}^{\delta}$, for some $m_1, l_1, m_2, l_2, \delta \in \mathbb{N}$ and $\delta>1$. Let $m=m_1+m_2$ and $l=l_1+l_2+1$. Then there is a tree $T\in \mathcal{T}_{m,l+1}^{\delta}$ with $T= T_1\cup T_2+e$ for some new edge $e$ between $T_1$ and $T_2$, such that $T$ satisfies Hypothesis~\ref{hypothesis}, and consequently can be embedded in $\overline{K_l} \vee m S_{\delta}$.
		
	\end{lem}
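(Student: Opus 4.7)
The plan is to construct $T$ by adding a single edge $e = u_1u_2$ between $T_1$ and $T_2$, with $u_1 \in B_1$ (the larger partite set of $T_1$) and $u_2 \in A_2$ (the smaller partite set of $T_2$). Under this choice, the bipartition of $T$ is $(A_1\cup A_2,\; B_1\cup B_2)$: $|A_1|+|A_2|=(l_1+1)+(l_2+1)=l+1$, and the size condition $m\ge 2(l+1)$ follows from $m_i\ge 2(l_i+1)$, so $A_1\cup A_2$ is the smaller partite set of $T$. Because $u_1\in B_1$, no degree in $A_1$ is altered, giving $\min_{v\in A_1} d_T(v)=\delta$; on $A_2$, only $u_2$ has its degree increased (by one) while every other $A_2$-vertex keeps its $T_2$-degree, so a vertex of degree $\delta$ in $A_2$ is still present and $\min_A d_T=\delta$. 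Hence $T\in\mathcal{T}_{m,l+1}^{\delta}$.

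To verify Hypothesis~\ref{hypothesis} for $T$, I pick $u_1$ to be a leaf of $T_1$; such a leaf exists in $B_1$ because $\delta>1$ forces every vertex of $A_1$ to have degree at least two. Let $w\in A_1$ be the unique $T_1$-neighbor of $u_1$, and let $I_1 \subseteq J'_1$ be a set witnessing Hypothesis~\ref{hypothesis} for $T_1$. The goal is to exhibit Hypothesis for $T$ with $I := I_1$. Since $u_1\in B_1$, every $v\in I_1$ satisfies $t_v^T = t_v^{T_1}$, so the right-hand side of Property~\ref{eq:hypthesis} is unchanged. Moreover, no new length-$2$ path between two vertices of $A_1$ is produced by $e$, so $T^{I_1}$ in $T$ equals $T_1^{I_1}$ in $T_1$; in particular it is a tree. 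A stage-by-stage analysis of the $J'$ construction then yields $J'_T \cap A_1 \subseteq J'_1 \cup \{w\}$: the only new $A_1$-vertex that can enter $J'_T$ through $e$ is $w$ itself (at stage~1, as the interior of a new length-$\ge 4$ $A$-to-$A$ path); any $A_1$-vertex strictly between $w$ and another $y\in A_1$ was already interior of the corresponding $T_1$-path and lies in stage~1 of $J'_1$; and the only new $B$-vertex whose neighborhood straddles $A_1$ and $A_2$ is $u_1$, whose two $T$-neighbors $w$ and $u_2$ are already in stage~1. Since $u_2\in J'_T$ is the only $A_2$-vertex at distance $2$ from any $A_1$-vertex in $T$, it never contributes to any $a_v^T$.

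When $w\in J'_1$ (in particular when $w\in I_1$), one has $a_v^T = a_v^{T_1}$ for every $v\in I_1$ and all remaining conditions of Hypothesis~\ref{hypothesis} transfer verbatim from $T_1$ to $T$. The main obstacle is the subcase $w\notin J'_1$: then $w$ enters $J'_T$ at stage~1, and if $w$ happens to be the unique distance-$2$ neighbor of some $v^* \in I_1$ in the partition of $A_1 \setminus J'_1$ associated with $T_1$, one gets $a_{v^*}^T = a_{v^*}^{T_1}-1$, so Property~\ref{eq:hypthesis} for $T$ with $I_1$ may fail by exactly one. I plan to resolve this either by (i) re-selecting $u_1$ among the leaves of $T_1$ in $B_1$ so that its neighbor $w$ already lies in $J'_1$---such a leaf is forced to exist under the structural constraints that Hypothesis~\ref{hypothesis} places on $T_1$---or by (ii) enlarging the witness to $I := I_1 \cup \{u_2\}$ and choosing $u_2 \in A_2$ with $d_{T_2}(u_2)=\delta$ and with at least one vertex of $A_2 \setminus J'_T$ at $T_2$-distance $2$ from $u_2$; this yields $t_{u_2}^T - 1 = 0$ and $a_{u_2}^T \geq 1$, exactly restoring the unit lost in the inequality. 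The remaining Hypothesis conditions, $a_v^T \le t_v^T$ for every $v \in I$ and $N_T(T^I \setminus J'_T) = I$, then follow by the same local inspection around the edge $e$. Once Hypothesis~\ref{hypothesis} is established, Theorem~\ref{thm: tree embedding} yields the desired embedding of $T$ into $\overline{K_l} \vee mS_{\delta}$.
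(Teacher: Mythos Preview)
Your setup (attaching $e=u_1u_2$ with $u_1\in B_1$, $u_2\in A_2$, and verifying $T\in\tmld$) is the same as the paper's. The real issue lies in the two fallback plans you sketch for the subcase $w\notin J'_1$.

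Plan (i) claims that a leaf of $T_1$ whose neighbor lies in $J'_1$ is ``forced to exist'' by Hypothesis~\ref{hypothesis}. This is false. Take $T_1=P_9$ with $\delta=2$, written $b_1\,a_1\,b_2\,a_2\,b_3\,a_3\,b_4\,a_4\,b_5$. Then $J'_1=\{a_2,a_3\}$ and $T_1$ satisfies the Hypothesis with $I_1=\{a_2\}$ (since $a_{a_2}=1>0=t_{a_2}$), yet the only leaves $b_1,b_5$ are adjacent to $a_1,a_4\notin J'_1$. The paper handles exactly this phenomenon via a trichotomy (Claim~\ref{cl: there is a leaf}): either some $u\in J'_1$ is adjacent to a leaf, or $T_1^{J'_1}$ has two leaves $u_1,u_2$ with $a_{u_i}>t_{u_i}$, or $J'_1=\{u\}$ with $a_u-1>t_u$. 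In the latter two cases the paper does \emph{not} attach $e$ at a leaf of $T_1$; instead it takes $w$ to be a non-leaf neighbor of one such $u_i$ (respectively of $u$) and then witnesses the Hypothesis for $T$ with the \emph{singleton} $\{u_2\}$ (respectively $\{u\}$), not with the original $I_1$.

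Plan (ii) also fails. Since $u_1$ is a leaf of $T_1$ and $w\notin I_1$, the only distance-$2$ neighbors of $u_2$ in $A$ are $w$ and vertices of $A_2$; none of these lie in $I_1$. Hence in $T^{I_1\cup\{u_2\}}$ the vertex $u_2$ is an isolated component, so $T^{I_1\cup\{u_2\}}$ is a forest with at least two components and cannot be the tree required by Hypothesis~\ref{hypothesis}. (In the $P_9$ example with $u_1=b_1$, $I_1=\{a_2\}$, one checks $\mathrm{dist}_T(a_2,u_2)=4$.) Moreover, the extra conditions $a_{u_2}^T\le t_{u_2}^T=1$ and $N_T(T^I\setminus J'_T)=I$ are not established by the ``local inspection'' you allude to. The fix the paper uses---abandoning $I_1$ in favor of a well-chosen singleton witness drawn from $J'_1$---avoids all of these obstructions.
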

	
	\begin{proof}
		For a tree $T$ in $\mathcal{T}_{m,l+1}^{\delta}$, let $t_i(T)$ and $a_i(T)$  denote the quantities $t_i$ and $a_i$ respectively, as given in Definition~\ref{def: ti} and  Definition~\ref{def: ai} in Section~\ref{sec: proof of 2nd embedding}. Let $J'(T)$ be the set of vertices in $T$ as given in Definition~\ref{def: J'}.
		Let $A_1$  be the smaller partite set of $T_1$ and $A_2$  be the smaller partite set of $T_2$.
		
		First, we prove the following claim and use it for the tree $T_1$. 
		
		\begin{cl}
			\label{cl: there is a leaf}
			Let tree $T\in \mathcal{T}_{m,l+1}^{\delta}$ where $\delta>1$. 
			Then at least one of the following happens:
			\begin{enumerate}[(i)]
				
				\item There is a vertex $u$ in $J'$ which is adjacent to a leaf.
				\item There are two leaves $u_1$ and $u_2$ in $T^{J'}$ with $a_{u_i} > t_{u_i}$ for $i=1,2$.
				\item $J'=\{u\}$, and $u$ is not adjacent to a leaf and $a_u-1>t_u$.
			\end{enumerate}
		\end{cl}
		\begin{proof}

			Consider a leaf $u$ in $T^{J'}$, which means $u\in J'$ and therefore $d_{T^{J'}}(u)=1$. We show $u$ is adjacent to a leaf if $a_u\leq t_u$ . If $u$ is not adjacent to any leaf in $T$, then we have  $d_{T}(u)=t_u+\delta=d_{T^{J'}}(u)+a_u$ and since $\delta>1$, we get $a_u>t_u$ which is a contradiction.
			
			If $|J'|>1$ and there is no leaf $v$ in $T^{J'}$ where $a_v\leq t_v$, then we have two leaves $u_1$ and $u_2$ in $T^{J'}$ with $a_{u_i}> t_{u_i}$ for $i=1,2$.
			
			For $|J'|=1$, if the only vertex $u$ in $J'$ is not adjacent to any leaf, then $d_{T}(u)=t_u+\delta=d_{T^{J'}}(u)+a_u$, and since $\delta>1$ and $d_{T^{J'}}(u)=0$, we have $a_u-1>t_u$.
		\end{proof}
		
		Now we show how using a tree $T_1$ in $\mathcal{T}_{m_1,l_1+1}^{\delta}$  and a tree $T_2$ in $\mathcal{T}_{m_2,l_2+1}^{\delta}$ we can make a tree $T$ in $\mathcal{T}_{m,l+1}^{\delta}$. We will pick a vertex $w$ from $T_1$ whose existence is guaranteed by the Claim~\ref{cl: there is a leaf} and add an edge $e$ that will make this vertex adjacent to a vertex in $A_2$. Then, $T$ will be the tree $T_1\cup T_2+e$. We are only adding an edge from the bigger partite set in $T_1$ to the smaller partite set in $T_2$ so $T$ has the smaller partite set $A_1\cup A_2$ and therefore $l+1=l_1+l_2+2$ and $m=|V(T_1)\cup V(T_2)|=m_1+m_2$. 
		
		Now consider the tree $T_1$. 
		
		If $T_1$ satisfies Claim~\ref{cl: there is a leaf}(i), let $w$  be a leaf of the vertex $u$ for $T_1$, and add an edge $e$ from $w$ to a vertex in $A_2$. By our choice of $w$, in the tree $T=T_1\cup T_2+e$, $J'(T_1) = J'(T)\cap V(T_1)$ and $a_v(T)=a_v(T_1)$ and $t_v(T)=t_v(T_1)$ for $v\in J'(T_1)$. (Even though for a vertex $u$ in $A_2$, $a_u(T_2)$ and $t_u(T_2)$ might differ from $a_u(T)$ and $t_u(T)$ respectively.) 
		If a subtree $T_1$ of a tree $T$ satisfies Hypothesis~\ref{hypothesis} and its smaller partite is a subset of the smaller partite of $T$ and $N_T(T_1^I\setminus J'(T_1))=I$, then $T$ satisfies Hypothesis~\ref{hypothesis} with the same $I$ as well. Therefore we can say that, since $T_1$ satisfies Hypothesis~\ref{hypothesis}, tree $T$ satisfies Hypothesis~\ref{hypothesis}.

		$T_1^{J'_1}$ is a subtree of $T^{J'}$ and $T_1$ satisfies Hypothesis~\ref{hypothesis}, and if a subtree of a tree satisfies the hypothesis with some set $I$ where $I$ is a subset of $J'$ in $T$  then the tree satisfies the hypothesis as well, therefore $T$ satisfies the hypothesis.
		
		Suppose $T_1$ satisfies Claim~\ref{cl: there is a leaf}(ii), we have two leaves $u_1$ and $u_2$ with $a_{u_1}(T_1)>t_{u_1}(T_1)$ and $a_{u_2}(T_1)>t_{u_2}(T_2)$. Let $w$ be in $N_{T_1}(u_1)\setminus N_{T_1^{J'(T_1)}}(u_1)$. Note that $N_{T_1}(u_1)\setminus N_{T_1^{J'(T_1)}}(u_1)$ is not empty since $d_{T_1}(u_1)>1$ and $d_{T_1^{J'(T_1)}}(u_1)=1$. We add an edge $e$ from $w$ to a vertex in $A_2$ to get the tree $T$. Since  $a_{u_2}(T)=a_{u_2}(T_1)$ and $t_{u_2}(T)=t_{u_2}(T_1)$, therefore $T$ satisfies Hypothesis~\ref{hypothesis} since we have the vertex $u_2$ with $a_{u_2}>t_{u_2}$.
		
		If $T_1$ satisfies Claim~\ref{cl: there is a leaf}(iii) meaning $J'(T_1)$ consists of only one vertex $u$ which is not adjacent to any leaf in $T_1^{J'(T_1)}$ and  $a_u-1>t_u$.  Let $w$ be one of the neighbors of $u$. We add an edge $e$ from $w$ to a vertex in $A_2$ to get $T$. By the way the tree $T$ is constructed,  $J'(T_1) = J'(T)\cap V(T_1)$, $a_u(T) \ge a_u(T_1)-1$, and $t_u(T)=t_u(T_1)$. Thus, $a_{u}(T)>t_{u}(T)$ and $T$ satisfies Hypothesis~\ref{hypothesis}.
	\end{proof}
	
	We can use this lemma to show that all nonempty $\tmld$ contain a tree embeddable in $\overline{K_l} \vee m S_{\delta}$, even if $t \ge l$.

	\begin{proof}[Proof of Theorem \ref{thm: t>l embeddible trees constructions}]

		For a family of trees $\mathcal{T}_{m,l+1}^{\delta}$, the condition for not being empty is $m\geq \max\{2l+2, (l+1)\delta+1\}$. Note that since $1<\delta$, we have $\max\{2l+2, (l+1)\delta+1\}=(l+1)\delta+1$. 
		
		If $m=(l+1)\delta+1$, then since $l\geq 1$, we have $m<(l+1)(\delta+1)$. Therefore by Corollary~\ref{cor: embeddability when t<l}, it follows that any $T \in \tmld$ can be embedded in $\overline{K_l} \vee m S_{\delta}$.
		
		So it remains to consider $m>(l+1)\delta+1$.
		
		Let $y:=m-(l+1)\delta-2$. Therefore, $m=(l+1)\delta+2+y$ and $0\leq y$.
		
		Choose $l_1$ such that $1 \leq l_1 \leq l$. This is possible since $l\geq 1$.
		
		Choose $m_1$ such that $(l_1+1)\delta+1 \leq m_1 \leq \min\{(l_1+1)(\delta+1)-1,(l_1+1)\delta+1+y\}$.
		This is possible since  $1 \leq l_1$ gives us $(l_1+1)\delta+1 \leq (l_1+1)(\delta+1)-1$, and $y \geq 0$ gives us $(l_1+1)\delta+1 \leq (l_1+1)\delta+1+y$.
		
		The inequality $(l_1+1)\delta+1 \leq m_1$ guarantees that the family of trees $\mathcal{T}_{m_1,l_1+1}^{\delta}$ is non-empty and since $m_1<(l_1+1) (\delta+1)$ by the Fact~\ref{fact}, any tree $T_1\in \mathcal{T}_{m_1,l_1+1}^{\delta}$ satisfies Hypothesis~\ref{hypothesis}. 
		
		Now define $m_2= m-m_1$ and $l_2=l-1-l_1$. We want to show that $\mathcal{T}_{m_2,l_2+1}^{\delta}$ is non-empty. So we need to show $m_2\geq \max\{2l_2+2, (l_2+1)\delta+1\}=(l_2+1)\delta+1$.
		
		Since $m_1\leq (l_1+1)\delta+1+y$ and   $m=(l+1)\delta+2+y$, we have
		$m-m_1\geq (l+1)\delta+2+y- [(l_1+1)\delta+1+y]$.
		Since we have $l+1=l_1+l_2+2$, we get $m_2= m-m_1\geq (l+1)\delta+2- [(l_1+1)\delta+1]=(l_1+1+l_2+1)\delta+2- [(l_1+1)\delta+1]=(l_2+1)\delta+1$. Therefore $m_2\geq (l_2+1)\delta+1$ and consequently the family of trees $\mathcal{T}_{m_2,l_2+1}^{\delta}$ is non-empty.
		
		Now we choose trees $T_1\in \mathcal{T}_{m_1,l_1+1}^{\delta}$ and $T_2 \in \mathcal{T}_{m_2,l_2+1}^{\delta}$ which is possible since these families are non-empty. By Lemma~\ref{lem: making embeddable tree} there is $T\in \mathcal{T}_{m,l+1}^{\delta}$ that satisfies Hypothesis~\ref{hypothesis} and therefore can be embedded in $\overline{K_l} \vee m S_{\delta}$.
	\end{proof}
	
	For a non-empty family of trees $\mathcal{T}_{m,l+1}^{\delta}$ when $l=0$ , we have $l+1=1$ and since $\delta=\min\{d_T(v): v\in A \}$ where $A$ is smaller partite set of $T$, we get $m-1=\delta$ and $\mathcal{T}_{m,l+1}^{\delta}$ only contains the tree $K_{1,\delta}$ which we know cannot be embedded in $\overline{K_l} \vee m S_{\delta}$, since $\overline{K_l} \vee m S_{\delta}$ has the maximum degree $\delta-1$ while $K_{1,\delta}$ has maximum degree $\delta$.

	Now consider $T\in \mathcal{T}_{m,l+1}^{\delta}$ for $\delta=1$ and $l>0$.
	Note that $\overline{K_l} \vee m S_{\delta}$ is the complete bipartite graph $K_{l,m}$ when $\delta=1$. 
	$T$ cannot be embedded into $K_{l,m}$ since both partite sets of $T$ are of size bigger than $l$, and $T$ is connected.

	\section {On the structure of spectral extremal graphs}
	\label{sec: some structural results}

	In this section we give a series of structural lemmas in order to prove Theorem \ref{thm: T - SPEX contains k_l,n-l}.
	
	For our proof, we will need the following known facts.
	
	\begin{obs} \label{obs: 0}
		For any integer $m \geq 2$, the complete bipartite graph $K_{l+1, m}$ contains all bipartite graphs with $m$ vertices such that the smaller color class has at most $l+1$ vertices. In partitcular, $K_{l+1, m}$ contains all trees with $m$ vertices such that the smaller color class contains at most $l+1$ vertices .
	\end{obs}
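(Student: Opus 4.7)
The plan is that this is essentially immediate from the definition of a complete bipartite graph, and the only thing to verify is that a bipartite graph with at most $l+1$ vertices on the smaller side and $m$ vertices in total has ``enough room'' on each side of $K_{l+1,m}$.

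In more detail, I would first handle the general bipartite case and then derive the tree statement as a corollary. Let $H$ be a bipartite graph on $m$ vertices with bipartition $\{X, Y\}$, $|X| \le |Y|$, and $|X| \le l+1$. Write $K_{l+1,m}$ with bipartition $\{U, V\}$ where $|U| = l+1$ and $|V| = m$. Since $|X| \le l+1 = |U|$, there is an injection $\varphi_1 : X \hookrightarrow U$. Since $|Y| = m - |X| \le m = |V|$, there is an injection $\varphi_2 : Y \hookrightarrow V$. Combine these into a single injection $\varphi : V(H) \to V(K_{l+1,m})$. Every edge $uv$ of $H$ has one endpoint in $X$ and one in $Y$, so $\varphi(u)\varphi(v)$ has one endpoint in $U$ and one in $V$; by definition of $K_{l+1,m}$, this pair is an edge. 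Hence $\varphi$ is a graph embedding of $H$ into $K_{l+1,m}$.

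For the ``in particular'' clause, recall that every tree $T$ on $m$ vertices is bipartite with a unique bipartition $\{A, B\}$, $|A| \le |B|$. If $|A| \le l+1$, then $T$ is of the form above and the general statement gives $T \subseteq K_{l+1, m}$.

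There is no real obstacle here; the only point requiring a brief comment is that we are using $|Y| \le m$ (rather than $|Y| \le m - 1$, which would follow from $|X| \ge 1$), so the argument goes through uniformly even in the degenerate cases $|X| = 0$ or $|X| = |Y|$. I would keep the whole proof to three or four sentences.
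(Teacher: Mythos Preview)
Your proof is correct and is exactly the natural argument. The paper in fact states this as an observation without proof, treating it as an immediate known fact; your write-up simply makes explicit the obvious injection of the two color classes into the two sides of $K_{l+1,m}$.
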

	
	\begin{lem}\label{lem: evalue} 
		Let $B$ be a non-negative symmetric matrix, $y$ be a non-negative non-zero vector, and $c$ be a positive constant. If $By \geq cy$ entry-wise, then $\lambda(B) \geq c$.
	\end{lem}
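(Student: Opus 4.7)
The plan is to deduce the eigenvalue bound from the Rayleigh quotient characterization of the largest eigenvalue of a symmetric matrix. Since $B$ is symmetric (and real), its largest eigenvalue admits the variational description
\[
\lambda(B) \;=\; \max_{x \neq 0} \frac{x^{T} B x}{x^{T} x},
\]
so it suffices to exhibit a single non-zero vector whose Rayleigh quotient is at least $c$; the natural candidate is the given vector $y$ itself.

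First I would take the entry-wise inequality $By \geq c y$ and take the inner product of both sides with $y$. Because $y$ is a non-negative vector, this inner product preserves the inequality: the vector $By - cy$ has non-negative entries, and so does $y$, so the sum $y^{T}(By - cy) = \sum_i y_i (By - cy)_i$ is a sum of non-negative terms and is therefore non-negative. This yields
\[
y^{T} B y \;\geq\; c\, y^{T} y.
\]
Since $y$ is non-zero, $y^{T} y > 0$, so dividing gives $y^{T} B y / y^{T} y \geq c$, and plugging into the Rayleigh formula above produces $\lambda(B) \geq c$.

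There is really no substantial obstacle here; the only subtlety to flag is that we cannot invoke a Collatz--Wielandt style argument which would typically require $y$ to be strictly positive or $B$ to be irreducible. Using the Rayleigh quotient (valid for any symmetric matrix and any non-zero test vector) sidesteps those hypotheses completely and uses only the non-negativity of $y$ together with the non-negativity of the entries of $By - cy$. Symmetry of $B$ is essential for the variational formula, and non-negativity of $B$ is, somewhat surprisingly, not actually needed for this step, though it is the setting in which the lemma will be applied in the paper.
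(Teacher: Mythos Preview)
Your proof is correct and follows essentially the same approach as the paper: take the inner product of the entry-wise inequality $By \ge cy$ with the non-negative vector $y$ to obtain $y^T B y \ge c\, y^T y$, then apply the Rayleigh quotient characterization of $\lambda(B)$. Your observation that non-negativity of $B$ is not actually used is also correct, though the paper does not remark on this.
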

	\begin{proof}
		Since $By\geq cy$ entry-wise, $y$ is a non-negative non-zero vector and $B$ is a non-negative symmetric matrix, we know that $y^TBy\geq y^Tcy$. Therefore $\lambda(B)\geq \frac{y^TBy}{y^Ty}\geq c$.
	\end{proof}
	
	The following is a well known bound on the Tur\' an number for any tree.
	
	\begin{lem}\label{lem: extree}
		For any tree $T_m$ with $m$ vertices,
		$$\frac{1}{2}(m-2)n \leq \mathrm{ex}(n,T_m) \leq (m-2)n.$$
	\end{lem}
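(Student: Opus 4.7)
The plan is to prove the two inequalities separately, as they are independent classical facts.

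For the lower bound $\frac{1}{2}(m-2)n \leq \mathrm{ex}(n, T_m)$, I would exhibit an explicit $T_m$-free construction. Consider the graph $G$ consisting of the disjoint union of $\lfloor n/(m-1)\rfloor$ copies of $K_{m-1}$, together with a clique on the remaining $n - (m-1)\lfloor n/(m-1)\rfloor < m-1$ vertices. Since $T_m$ is connected on $m$ vertices, and every component of $G$ has at most $m-1$ vertices, $G$ is $T_m$-free. Counting edges, $G$ has at least $\lfloor n/(m-1)\rfloor \binom{m-1}{2}$ edges, which yields the claimed lower bound (up to a negligible additive constant depending only on $m$; the bound is tight when $(m-1) \mid n$).

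For the upper bound $\mathrm{ex}(n, T_m) \leq (m-2)n$, I would use the standard two-step argument. First, the well-known fact that every graph of minimum degree at least $m-1$ contains every tree on $m$ vertices. This is proved by induction on $m$: pick a leaf $v$ of $T_m$ with neighbor $u$, apply induction to embed $T_m - v$ in the host graph, and then extend to $v$ by choosing any neighbor of the image of $u$ that is not already used; at most $m-2$ of its $\geq m-1$ neighbors are used, so such a vertex exists.

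The second step converts this into an edge bound. If $G$ has more than $(m-2)n$ edges, then its average degree exceeds $2(m-2)$. By the classical observation that every graph contains a subgraph with minimum degree at least half of its average degree (obtained by iteratively deleting vertices of low degree), $G$ contains a subgraph $G'$ with $\delta(G') > m-2$, hence $\delta(G') \geq m-1$. By the first step, $G'$ contains $T_m$, and therefore so does $G$. The main (and only) subtlety is ensuring the greedy extension argument in the first step correctly handles the order of vertex insertion in $T_m$ (one must process vertices in an order compatible with the tree structure, e.g., by removing leaves iteratively), but this is routine.
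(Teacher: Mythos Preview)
The paper does not supply a proof of this lemma; it is stated as a well-known bound and used as a black box. Your argument is the standard classical proof (disjoint $K_{m-1}$'s for the lower bound, greedy tree embedding into a high-minimum-degree subgraph for the upper bound) and is correct, including your acknowledgment that the lower bound holds exactly only when $(m-1)\mid n$ and otherwise up to an additive $O_m(1)$ term.
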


	We define $\eta, \epsilon, \text{and } \alpha $ to be positive constants satisfying
	\begin{align}\label{choice of constants}
		\begin{split}
			\eta & < \min\left\{\frac{1}{2l}, \frac{\frac{1}{5}-\frac{1}{16lm}+\frac{1}{20l^2m}}{m-l-1}\right\}\\
			\epsilon &< \min\left\{\frac{1}{16\,l^2m}, \frac{\eta}{2}, \frac{\eta}{32\,l^2\,m+2} \right\}\\
			\alpha &< \min \left\{ \eta,\frac{\epsilon^2}{10\,m}
			\right\}.
		\end{split}
	\end{align}
	Note that $\frac{1}{2l}$ and ${(\frac{1}{5}-\frac{1}{16lm}+\frac{1}{20l^2m})}/{(m-l-1)}$ are positive since $l$ is a natural number and $m\geq 2l+2$. 
	
	Therefore, $\eta$ can be chosen. Also we can choose $\epsilon$ and $\alpha$, since they are less than the minimum of some positive numbers.
	
	In the rest of this section, we will assume that positive numbers $\eta, \epsilon$ and $\alpha$ satisfy the above constraints.

	We will now define four sets that will be used through the rest of this section. 
	
	Let $G \in \mathrm{SPEX}(n, T)$ where $T \in \tmld$. Let $L = \{v \in V=V(G) : \mathrm{x}_v \geq \alpha\}$ , $S = V \setminus L$, $M=\{v \in V=V(G) : \mathrm{x}_v \geq \alpha/(\frac{3}{2}m)\}$, and $L'=\{v \in V=V(G): \mathrm{x}_v \geq \eta\}$. The subset $L$ consists of the vertices with ``Large" Perron weight and $S$ is the set of vertices with ``Small" Perron weight. Note that $L'\subset L\subset M$ where the  vertices of $M$ may be thought of as having ``not too small" Perron weight and $L'$ may be thought of as denoting the set of vertices with ``very large" Perron weight.

	The following lemma provides upper and lower bounds for $\lambda(G)$. 
	Note that we always assume $n\geq m\geq 2l+2$. Since if $n<m$ then we have $SPEX(n,T)=K_n$.

	\begin{lem} \label{lem2**}
		Let $G \in \mathrm{SPEX}(n, T)$ for any fixed $T \in \mathcal{T}_{m,l+1}^{\delta}$. Then,
		$$\sqrt{l(n-l)} = \lambda(K_{l, n-l}) \leq \lambda(G) \leq \sqrt{(2m -4)n}.$$

	\end{lem}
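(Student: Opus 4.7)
The statement has two independent bounds, and neither requires any of the heavy embedding machinery from Section 2; both follow from classical facts together with the basic structural assumption that $T \in \mathcal{T}_{m,l+1}^{\delta}$.

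For the lower bound, the plan is to show $K_{l,n-l}$ is $T$-free and then use the extremality of $G$. Let $T$ have bipartition $(A,B)$ with $|A|=l+1 \le |B|$. In any bipartite embedding of the connected graph $T$ into $K_{l,n-l}$, the two partite classes of $T$ must map into distinct sides of $K_{l,n-l}$. But $|A|,|B| \ge l+1 > l$, so neither can fit into the side of size $l$, giving a contradiction. Hence $K_{l,n-l}$ contains no copy of $T$, and since $G \in \SPEX(n,T)$, we have $\lambda(G) \ge \lambda(K_{l,n-l})$. The equality $\lambda(K_{l,n-l}) = \sqrt{l(n-l)}$ is standard (it is the positive root of the $2\times 2$ quotient matrix, or one can check directly that $K_{l,n-l}$'s nonzero eigenvalues are $\pm\sqrt{l(n-l)}$).

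For the upper bound, the plan is to combine a trace bound on the spectral radius with the known linear upper bound on the Tur\'an number of a tree. Since $G$ is $T$-free on $n$ vertices, Lemma~\ref{lem: extree} gives $e(G) \le \ex(n,T) \le (m-2)n$. Using the standard inequality
\[
\lambda(G)^2 \le \sum_{i=1}^{n} \lambda_i(G)^2 = \mathrm{tr}\bigl(A(G)^2\bigr) = 2e(G),
\]
we get $\lambda(G) \le \sqrt{2e(G)} \le \sqrt{2(m-2)n} = \sqrt{(2m-4)n}$, as desired.

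There is no real obstacle in either direction: the lower bound is a one-line embedding argument using only $|A|=l+1$, and the upper bound is just ``trace bound + tree Tur\'an number.'' The only point worth being careful about is the verification that $K_{l,n-l}$ is $T$-free, which is precisely where the parametrization of $\mathcal{T}_{m,l+1}^{\delta}$ by the size of the smaller partite set enters.
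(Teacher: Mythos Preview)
Your proof is correct and follows essentially the same approach as the paper: the lower bound comes from the fact that $K_{l,n-l}$ is $T$-free (the paper states this without elaboration, while you spell out why), and the upper bound uses the trace identity $\lambda(G)^2 \le \sum_i \lambda_i^2 = 2e(G)$ together with the tree Tur\'an bound from Lemma~\ref{lem: extree}.
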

	
	\begin{proof}
		
		The lower bound is derived from the spectral radius of $K_{l, n-l}$. For the upper bound, consider $A^2(G)$. Let $\mathrm{x}$ be the Perron vector of $A(G)$, normalized such that $||\mathrm{x}||_{\infty} = 1$. Let $\lambda_1 \ge \lambda_2 \ge \ldots, \lambda_{n}$ be the eigenvalues of $A(G)$, in descending order. So, $\lambda_1 = \lambda(G)$. The trace of $A^2(G)$ is equal to $\sum_{v\in V}d_G(v)=\sum_{i=1}^{n} \lambda_i^2$. Therefore,
		$\lambda_1^2 \leq \sum_{i=1}^{n} \lambda_i^2 = \sum_{v\in V}d_G(v) \leq 2e(G) \leq 2\mathrm{ex}(n, T) \leq (2m-4)n.$ 
	\end{proof}
	
	We note that one may also use Theorem \ref{spectral erdos sos} to derive the upper bound above, for $n$ sufficiently large.
	
	Next, we bound the sizes of the sets of vertices with ``large" and ``not too small" Perron weights.

	\begin{lem}
		Let $G \in \mathrm{SPEX}(n, T)$ for any fixed $T \in \mathcal{T}_{m,l+1}^{\delta}$. Then $|L| \le \frac{(4m-8)}{\alpha}\sqrt{n}$ and $|M| \le \dfrac{6m(m-2)}{\alpha}\sqrt{n}$.
	\end{lem}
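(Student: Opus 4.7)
\medskip

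\noindent\textbf{Proof proposal.} The plan is to obtain a lower bound on the degree of every vertex of large Perron weight, and then compare the total degree with the Tur\'an number of $T$.

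First I will use the Perron equation together with the normalization $\|\mathrm{x}\|_\infty = 1$. For every vertex $v \in V(G)$, the eigenvalue equation gives
\[
\lambda(G)\,\mathrm{x}_v \;=\; \sum_{u \in N_G(v)} \mathrm{x}_u \;\le\; d_G(v)\cdot\|\mathrm{x}\|_\infty \;=\; d_G(v),
\]
so $d_G(v) \ge \lambda(G)\,\mathrm{x}_v$ for every $v$. In particular, every vertex in $L$ has degree at least $\alpha\,\lambda(G)$, and every vertex in $M$ has degree at least $\tfrac{2\alpha}{3m}\lambda(G)$.

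Next I will sum these degree lower bounds over $L$ (resp.\ $M$) and compare with the total degree of $G$. Since $G$ is $T$-free, Lemma~\ref{lem: extree} yields $e(G) \le \mathrm{ex}(n,T) \le (m-2)n$, hence
\[
\sum_{v \in L} d_G(v) \;\le\; 2e(G) \;\le\; (2m-4)n,
\]
and similarly for $M$. Combining this with the per-vertex degree bound gives
\[
|L|\cdot \alpha\,\lambda(G) \;\le\; (2m-4)n, \qquad |M|\cdot \tfrac{2\alpha}{3m}\,\lambda(G) \;\le\; (2m-4)n.
\]

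Finally, I will insert the lower bound $\lambda(G) \ge \sqrt{l(n-l)}$ from Lemma~\ref{lem2**} and simplify. Since $l \ge 1$ and $n \ge m \ge 2l+2$, we have $l(n-l) \ge n-l \ge n/2$, so $\sqrt{l(n-l)} \ge \sqrt{n/2}$. Therefore
\[
|L| \;\le\; \frac{(2m-4)n}{\alpha\sqrt{n/2}} \;=\; \frac{(2m-4)\sqrt{2}}{\alpha}\sqrt{n} \;\le\; \frac{4m-8}{\alpha}\sqrt{n},
\]
and analogously
\[
|M| \;\le\; \frac{3m(2m-4)\sqrt{2}}{2\alpha}\sqrt{n} \;\le\; \frac{6m(m-2)}{\alpha}\sqrt{n}.
\]

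The proof is entirely routine once the Perron equation and the extremal-number bound are in hand; there is no real obstacle. The only point requiring a touch of care is the inequality $\sqrt{l(n-l)} \ge \sqrt{n/2}$, which uses the standing hypothesis $n \ge 2l+2$ needed for $\mathcal{T}_{m,l+1}^{\delta}$ to be nonempty (see Remark~\ref{rem: tmld} and Remark~\ref{rem: nonempty family}).
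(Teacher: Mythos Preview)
Your proof is correct and follows essentially the same approach as the paper: both use the eigenvector inequality $d_G(v)\ge \lambda(G)\,\mathrm{x}_v$, sum over $L$ (resp.\ $M$), compare with $2e(G)\le (2m-4)n$ via Lemma~\ref{lem: extree}, and then insert the lower bound $\lambda(G)\ge\sqrt{l(n-l)}$ from Lemma~\ref{lem2**}. The only cosmetic difference is that the paper writes the final simplification as $\tfrac{n}{\sqrt{l(n-l)}}\le 2\sqrt{n}$ for $n\ge 2l+2$, whereas you equivalently bound $\sqrt{l(n-l)}\ge\sqrt{n/2}$.
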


	\begin{proof}
		Let $\mathrm{x}$ be the Perron vector of $A(G)$, scaled so that $||\mathrm{x}||_{\infty} = 1$. We also use $\lambda$ in place of $\lambda(G)$ for clarity. For $v\in L$, we have $\lambda \alpha \leq \lambda \mathrm{x}_v \leq d_G(v)$, and for $v\in M$, $\lambda \alpha/(\frac{3}{2}m) \leq \lambda \mathrm{x}_v \leq d_G(v)$. Thus, Lemma \ref{lem2**} gives
		\begin{align*}
			\sqrt{l (n-l)}|L|\alpha &\leq \lambda |L| \alpha \leq \lambda\sum_{v \in L} \mathrm{x}_v \leq \lambda\sum_{v \in V} \mathrm{x}_v \leq \sum_{v \in V} d_G(v) \leq 2 \mathrm{ex}(n, T) \leq (2m-4)n,
		\end{align*}
		and
		\begin{align*}
			\sqrt{l (n-l)}|M| \frac{\alpha}{\frac{3}{2}m} &\leq \lambda |M| \frac{\alpha}{\frac{3}{2}m} \leq \lambda\sum_{v \in M} \mathrm{x}_v \leq \lambda\sum_{v \in V} \mathrm{x}_v \leq \sum_{v \in V} d_G(v) \leq 2 \mathrm{ex}(n, T) \leq (2m-4)n.
		\end{align*}
		Therefore, we have $|L| \le \dfrac{2m-4}{\alpha} \dfrac{n}{\sqrt{(n-l)l}}$ and $|M| \le \dfrac{3m(m-2)}{\alpha}\dfrac{n}{\sqrt{(n-l)l}}$. Since $\frac{n}{\sqrt{(n-l)l}} \le 2\sqrt{n}$  when $n\geq 2l+2$, we have $|L| \le \frac{(4m-8)}{\alpha}\sqrt{n}$ and $|M| \le \dfrac{6m(m-2)}{\alpha}\sqrt{n}$.
	\end{proof}
	
	In the following lemma we show that every vertex of $L$ has degree linear in $n$. This will in turn give us a constant bound for the size of $L$ (recall that $m, l$, and $\delta$ are fixed).
	
	\begin{lem}
		\label{lem4.1}
		There exists $n_1 \leq \frac{200m^6}{\alpha^4}$ such that for all $n > n_1$ the following holds for $G \in \mathrm{SPEX}(n, T)$ for any fixed $T \in \mathcal{T}_{m,l+1}^{\delta}$.  Every vertex $v\in L$ has degree $d_G(v)\geq \dfrac{\alpha (n-l) }{10(2m-3)l} $. 
		Moreover, $|L|\leq\dfrac{50}{\alpha} m^2 l$.
	\end{lem}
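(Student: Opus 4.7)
The plan is a proof by contradiction via a spectral swap. Let $\x$ be the Perron vector of $G$ scaled so that $\max_w \x_w = 1$, fix a vertex $u$ with $\x_u = 1$, and write $\lambda := \lambda(G)$; by Lemma~\ref{lem2**}, $\sqrt{l(n-l)} \leq \lambda \leq \sqrt{(2m-4)n}$. Assume for contradiction that some $v \in L \setminus \{u\}$ violates the degree bound, so that $d_G(v) < \frac{\alpha(n-l)}{10(2m-3)l}$. I would construct $G^*$ from $G$ by deleting $v$ together with its incident edges and inserting a new vertex $v^*$ with $N_{G^*}(v^*) = N_G(u)\setminus\{v\}$, so that $v^*$ becomes a non-adjacent false twin of $u$ in $G^*$.

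Using the test vector $\x^*$ that agrees with $\x$ on $V(G)\setminus\{v\}$ and satisfies $\x^*_{v^*} = 1$, a direct Rayleigh-quotient computation gives
$$\lambda(G^*) - \lambda \;\geq\; \frac{\lambda (1 - \x_v^2) - 2[v \sim u]\x_v}{\|\x\|^2 + 1 - \x_v^2}\ >\ 0$$
for $n$ sufficiently large, because $\x_v < \x_u = 1$. It therefore suffices to verify that $G^*$ is also $T$-free in order to derive the contradiction $\lambda(G^*) > \lambda(G)$ with $G \in \mathrm{SPEX}(n,T)$. This verification is the hardest step. Suppose $T' \subseteq G^*$ is a copy of $T$; then $v^* \in V(T')$, since otherwise $T' \subseteq G^* - v^* = G - v$ would contradict the $T$-freeness of $G$. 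If $u \notin V(T')$, the substitution $v^* \mapsto u$ in $T'$ is valid because $N_{T'}(v^*) \subseteq N_{G^*}(v^*) \subseteq N_G(u)$, and it embeds $T$ into $G$ --- contradiction.

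The remaining case $u \in V(T')$ is the technical heart. Here $u$ and $v^*$ are non-adjacent twins in $T'$, so the tree structure forces them to be leaves of $T'$ sharing a unique parent $w \in V(T') \cap N_G(u)$. I would then find $v' \in N_G(w) \setminus (V(T')\cup\{v\})$ and substitute $v^* \mapsto v'$ in $T'$, producing a copy of $T$ in $G$ --- again a contradiction. Existence of $v'$ reduces to $d_G(w) \geq m+1$, which in turn combines the Perron equation $\lambda = \sum_{z \sim u} \x_z$ at $u$, the codegree bound $|N(a)\cap N(b)| \leq m-1$ arising from the $K_{l+1,m}$-freeness of $G$ (a consequence of Observation~\ref{obs: 0}), the bound on $|M|$ from the preceding lemma, and the small-degree hypothesis on $v$. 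Tracking the polynomial error terms through these estimates produces the threshold $n_1 \leq 200\,m^6/\alpha^4$; the separate case $v = u$ is handled by the same swap with a different auxiliary vertex of near-maximum Perron weight.

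For the size bound on $L$, I would sum the degree lower bound over $v \in L$ and invoke $\sum_{v\in L} d_G(v) \leq 2e(G) \leq 2(m-2)n$ from Lemma~\ref{lem: extree}, obtaining
$$|L| \;\leq\; \frac{20(m-2)(2m-3)\,l\,n}{\alpha(n-l)}\;\leq\; \frac{50\,m^2\,l}{\alpha},$$
where the last inequality uses $n \geq n_1$, which forces $n/(n-l) \leq 5/4$. The main obstacle, as indicated above, will be the twin case of the $T$-freeness analysis: forcing $d_G(w) \geq m+1$ for the common parent $w$ demands a quantitative --- not merely structural --- use of the Perron eigenvector, propagating the small-degree hypothesis on $v$ into a degree lower bound for a \emph{different} vertex $w$.
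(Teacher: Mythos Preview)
Your approach is genuinely different from the paper's --- the paper never modifies $G$; it applies the second-order eigenvector equation $\lambda^2\x_v=\sum_{u\sim v}\sum_{w\sim u}\x_w$ at the hypothetically low-degree vertex $v$, splits the walk endpoints according to whether they lie in $M$ or outside $M$, bounds $e(N_1,M_2)$ via Lemma~\ref{lem: extree} and the size bound on $|M|$, and bounds the $N_2\setminus M_2$ contribution using the small Perron weights there. This yields an inequality of the form $(l-0.3)\alpha(n-l)\le \frac{(m-2)n\alpha}{3m/2}$, contradicting $l\ge 1$. The edge-count argument for $|L|$ is then the same as yours.

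Your spectral-swap route, however, has a real gap in the $T$-freeness verification. You assert that when $u,v^*\in V(T')$ the twin relation forces $u$ and $v^*$ to be leaves of $T'$ with a common parent $w$. But $u$ and $v^*$ are twins in $G^*$, not in $T'$: from $N_{G^*}(u)=N_{G^*}(v^*)$ one only gets $N_{T'}(u),N_{T'}(v^*)\subseteq N_{G^*}(u)$, which does not force equality of the $T'$-neighborhoods. For instance, if $a,b,c\in N_{G^*}(u)$ then the path $a\,u\,b\,v^*\,c$ is a perfectly good tree in $G^*$ containing both $u$ and $v^*$ as interior (degree-$2$) vertices. So the reduction to a single parent $w$ fails, and with it the substitution $v^*\mapsto v'$.

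Even in the leaf case, the codegree bound $|N(a)\cap N(b)|\le m-1$ you invoke does not follow from $K_{l+1,m}$-freeness when $l\ge 2$: two vertices in $K_{l,n-l}\subseteq G$ can have $n-l$ common neighbors. So the argument that $d_G(w)\ge m+1$ is unsupported as written, and there is no evident way to propagate the small-degree hypothesis on $v$ to a degree lower bound on an arbitrary $w\in N_G(u)$. A minor additional wrinkle: the Rayleigh inequality $\lambda(1-\x_v^2)-2[v\sim u]\x_v>0$ requires $1-\x_v>2/\lambda$ when $v\sim u$, and the hypothesis $d_G(v)<D$ with $D=\Theta(n)$ does not by itself push $\x_v$ below $1-2/\lambda$. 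The paper's counting argument sidesteps all of these issues.
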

	
	\begin{proof}
		
		Assume, for the sake of contradiction, that there exists a vertex $v \in L$ with degree $d_G(v) < \dfrac{\alpha (n-l)}{10(2m-3)l}$. Let $\mathrm{x}_v  = c \geq \alpha$. Note that all the distance sets used below, such as $N_i, L_i, M_i, S_i$, are defined with respect to the vertex $v$ (recall the definitions given in Section~\ref{sec: notation}). Using the second-degree eigenvector-eigenvalue equation, $A(G)^2 \mathrm{x}_v  = \lambda(G)^2 \mathrm{x}_v $, with respect to vertex $v$, we get:
		$$cl(n-l) \leq \lambda^2c = \lambda^2\mathrm{x}_v  = \sum_{u \sim v, w \sim u}\mathrm{x}_w \leq d_G(v)c + 2e(N_1) + \sum_{u \sim v}\sum_{w \sim u, w \in N_2}\mathrm{x}_w$$
		$$\leq (2m-3)d_G(v) + \sum_{u \sim v}\sum_{w \sim u, w \in M_2}\mathrm{x}_w + \sum_{u \sim v}\sum_{w \sim u, w \in N_2\setminus M_2}\mathrm{x}_w.$$
		This inequality implies:
		$$cl(n-l) - (2m-3)d_G(v) \leq \sum_{u \sim v}\sum_{w \sim u, w \in M_2}\mathrm{x}_w + \sum_{u \sim v}\sum_{w \sim u, w \in N_2\setminus M_2}\mathrm{x}_w.$$
		Using the fact that $d_G(v) < \frac{\alpha (n-l)}{10(2m-3)l}$, 
		
		we can write:
		\begin{equation}
			\label{eqn for lowerbnd degrees of L}
			cl(n-l) - \frac{c(n-l)}{10l} < \sum_{u \sim v}\sum_{w \sim u, w \in M_2}\mathrm{x}_w + \sum_{u \sim v}\sum_{w \sim u, w \in N_2\setminus M_2}\mathrm{x}_w.    
		\end{equation}
		By applying Lemma~\ref{lem: extree}, we obtain:
		$$\sum_{u \sim v}\sum_{w \sim u, w \in M_2}\mathrm{x}_w \leq e(N_1, M_2) \leq (m-2)(d_G(v) + |M|).$$
		Thus,
		
		$$\sum_{u \sim v}\sum_{w \sim u, w \in M_2}\mathrm{x}_w < (m-2)\left(\frac{\alpha (n-l)}{10(2m-3)l} + \frac{6m(m-2)}{\alpha}\sqrt{n}\right). $$
		It is clear that
		$$(m-2)\frac{\alpha (n-l)}{10(2m-3)l} < \frac{\alpha (n-l)}{10}.$$
		
		For $n \ge \frac{200m^6}{\alpha^4}$, we have: 
		
		\begin{equation*}
			(m-2)\left(\frac{6m(m-2)}{\alpha}\sqrt{n}\right) < \frac{\alpha (n-l)}{10}.
		\end{equation*}

		So,
		\begin{equation}
			\label{l}
			(m-2)\left(\frac{\alpha (n-l)}{10(2m-3)l} +\frac{6m(m-2)}{\alpha}\sqrt{n}\right) < \frac{2\alpha (n-l)}{10}.
		\end{equation}

		Besides:
		$$\sum_{u \sim v}\sum_{w \sim u, w \in N_2\setminus M_2}\mathrm{x}_w \leq e(N_1, N_2\setminus M_2) \frac{\alpha}{\frac{3}{2}m} \leq (m-2)n\frac{\alpha}{\frac{3}{2}m}.$$
		
		Using (\ref{eqn for lowerbnd degrees of L}) and (\ref{l}), we obtain:
		$$(l-0.3)\alpha (n-l) \le lc(n-l) - 0.1c(n-l) - 0.2\alpha (n-l)  \leq (m-2)n\frac{\alpha}{\frac{3}{2}m},  $$
		which is a contradiction because $l \geq 1$, and therefore, $\frac{m-2}{\frac{3}{2}m} < l-0.3$. Hence, $d_G(v) > \frac{\alpha (n-l)}{10(2m-3)l}$ for all $v \in L$.
		Furthermore,
		$$(m-2)n \geq e(G) \geq \frac{1}{2}\sum_{v \in L}d_G(v) \geq |L|\frac{\alpha (n-l)}{20(2m-3)l}.$$
		Therefore, we get $|L| \leq \frac{20(2m-3)(m-2)l}{\alpha}\frac{n}{n-l} \leq \frac{50}{\alpha} m^2 l$.
	\end{proof}
	
	Next, we show lower bounds for degrees of vertices in $L'$ that are growing linearly with respect to their Perron entries. Further, we deduce that there are roughly $ln$ edges between $S_1$ which is defined with respect to $z$ and vertices with distance less than 3 from $z$ with large Perron weight.
	
	\begin{lem} \label{lem: degL'}
		There exists $n_2\leq \max\{n_1, \frac{2500m^3 l^2}{\alpha^3}, \frac{l^2}{\epsilon^2}, \frac{50m^3l}{\epsilon \alpha} \binom{50m^2l/\alpha}{l}, \frac{100m^2 }{\alpha \epsilon}\}$ such that for all $n > n_2$ the following statements hold for $G \in \mathrm{SPEX}(n, T)$ for any fixed $T \in \mathcal{T}_{m,l+1}^{\delta}$.  
		\begin{enumerate}
			\item[(i)] If $v$ is a vertex in $L'$ with $\mathrm{x}_v = c$, then $d_G(v) \geq cn - \epsilon n$.
			\item[(ii)] With respect to vertex $z$, we have $(1-\epsilon)ln \leq e(S_1,\{z\}\cup L_1\cup L_2) \leq (l+\epsilon)n$.
			
		\end{enumerate}
		
	\end{lem}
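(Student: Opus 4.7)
My plan is to prove (i) by contradiction via a spectral rearrangement, and then deduce (ii) from (i) using $T$-freeness together with careful double-counting.

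For part (i), assume for contradiction that $v \in L'$ with $\mathrm{x}_v = c$ satisfies $d_G(v) < cn - \epsilon n$, so that $U := V \setminus (\{v\} \cup N_G(v))$ has size at least $\epsilon n - 1 > 0$. The eigenvalue equation $\lambda \mathrm{x}_v = \sum_{u \sim v}\mathrm{x}_u$, combined with $|L| \leq 50m^2 l/\alpha$ from Lemma~\ref{lem4.1} and $\mathrm{x}_u < \alpha$ for $u \in S$, yields preliminary degree information (namely $d_G(v) \geq (\lambda c - |L|)/\alpha$), but this by itself gives only a sub-linear bound of order $\sqrt{n}$. To bridge the gap I would perform a Kelmans-style edge swap: pick $w \in U$ and an edge $e$ incident to a suitably low-Perron-weight vertex, and form $G' = G - e + vw$. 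Using $\mathrm{x}_v = c \geq \eta$ together with the smallness of $\epsilon, \alpha$ from \eqref{choice of constants}, a Rayleigh-quotient comparison gives $\lambda(G') > \lambda(G)$. The main obstacle will be verifying that $G'$ remains $T$-free; I intend to argue that any new copy of $T$ in $G'$ must use the edge $vw$, and that the bound $|L| = O(1)$ combined with the bipartite structure of $T$ and Observation~\ref{obs: 0} rules out such a copy for $n$ past the stated threshold. This contradicts $G \in \mathrm{SPEX}(n, T)$.

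For part (ii), apply (i) to $z$ (where $\mathrm{x}_z = 1 \geq \eta$) to obtain $d_G(z) \geq n - \epsilon n$, hence $|S_1| \geq d_G(z) - |L_1| \geq (1-2\epsilon)n$ for $n > n_2$. For the upper bound, set $Y := \{z\} \cup L_1 \cup L_2$, so $|Y| \leq |L| + 1 = O(1)$. If $e(S_1, Y) > (l+\epsilon)n$, then averaging over $(l+1)$-element subsets of $Y$ (Kővári--Sós--Turán style) yields a subset $Y' \subseteq Y$ with $|Y'| = l+1$ having at least $m - l - 1$ common neighbors in $S_1$; the requirement $n \geq \tfrac{50m^3l}{\epsilon\alpha}\binom{|L|}{l}$ in the definition of $n_2$ ensures this threshold is met. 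The resulting $K_{l+1, m-l-1} \subseteq G$ contains $T$ by Observation~\ref{obs: 0}, contradicting the $T$-freeness of $G$.

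For the lower bound, each $u \in S_1$ contributes the edge $uz$, giving $|S_1| \geq (1-2\epsilon)n$ edges to $Y$. To reach $(1-\epsilon)ln$ I will first establish $|L' \cap (N_1(z) \cup N_2(z))| \geq l - 1$; otherwise a Rayleigh-quotient test vector supported on $\{z\} \cup L_1 \cup L_2$ together with their common neighborhood in $S_1$ would force $\lambda(G) < \sqrt{l(n-l)}$, contradicting Lemma~\ref{lem2**}. Applying (i) to each $u$ in this set and using that $|L| = O(1)$, almost all neighbors of such $u$ lie in $S$, and in fact in $S_1$ (since vertices in $N_{\geq 3}(z)$ are only reached through the small set $L_1$). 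A direct double-count then yields $e(S_1, L_1 \cup L_2) \geq (l-1)(1 - O(\epsilon)) n$, and combining with the $|S_1|$ edges to $z$ gives the claimed lower bound $(1-\epsilon)ln$. The hardest step throughout is part (i): bootstrapping Perron weight into a \emph{linear} degree lower bound via the swap, and verifying that the swap preserves $T$-freeness.
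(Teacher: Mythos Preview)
Your approach to part (i) has a genuine gap. The edge-swap $G' = G - e + vw$ requires two things simultaneously: a Rayleigh-quotient gain, and $T$-freeness of $G'$. You have not specified which $w \in U$ or which edge $e$ to use, and for the latter you give no concrete argument---you only ``intend to argue'' it. Adding the single edge $vw$ to a $T$-free graph can certainly create a copy of $T$: at this stage you know almost nothing about the local structure near $v$ or $w$ beyond $|L| = O(1)$, and Observation~\ref{obs: 0} tells you when a graph \emph{does} contain $T$, never when it does not. Note that the swap argument that does appear in the paper (Lemma~\ref{lem4.5}) reconnects a vertex \emph{to all of $L'$} and works only because by then one already knows $|L'| = l$ and the common neighbourhood of $L'$ has size close to $n$, so any use of the swapped vertex in a copy of $T$ can be replaced by another vertex; that structural information is established precisely via the present lemma, so invoking a swap here is premature.

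The paper's proof of (i) avoids any graph modification. It applies the second-order equation $\lambda^2 \x_v = \sum_{u \sim v}\sum_{w \sim u}\x_w$ and bounds each term to obtain
\[
cl(n-l) \;\le\; d_G(v)\,c \;+\; e(S_1, L_1 \cup L_2) \;+\; \tfrac{\epsilon^2 n}{2}.
\]
If $d_G(v) < cn - \epsilon n$, this forces $e(S_1, L_1 \cup L_2) \ge (l-1)cn + \tfrac{\epsilon^2 n}{2}$, and then a pigeonhole over $\binom{|L|}{l}$ subsets of $L_1 \cup L_2$ yields $l$ vertices with $m$ common neighbours in $S_1$, giving $K_{l+1,m} \subset G$ and contradicting $T$-freeness of $G$ itself. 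Your upper bound in (ii) is essentially this same pigeonhole. For the lower bound in (ii), your route through $|L' \cap (N_1(z) \cup N_2(z))| \ge l-1$ is both indirect and shaky (a ``Rayleigh-quotient test vector'' produces lower bounds on $\lambda$, not upper bounds); the paper instead reuses the displayed inequality at $v = z$, $c = 1$, together with $e(S_1,\{z\}\cup L_1\cup L_2) \ge d_G(z) - |L| + e(S_1, L_1 \cup L_2)$, from which $(1-\epsilon)ln$ follows in one line.
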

	\begin{proof}
		We assume $n \geq \max\{n_1, \frac{2500m^3 l^2}{\alpha^3}, \frac{cl^2}{\epsilon^2}, \frac{50m^3l}{\epsilon \alpha} \binom{50m^2l/\alpha}{l},  \frac{200m^2 l}{\alpha \epsilon}\}$.
		
		Assume $v \in L'$ with $\mathrm{x}_v = c$. Note that all the distance sets used below, such as $N_i, L_i, S_i$, are defined with respect to the vertex $v$ (recall the definitions given in Section~\ref{sec: notation}). Now, considering the second-degree eigenvalue and eigenvector equation with respect to the vertex $v$, we obtain:
		
		\begin{equation}\label{eqn in scaling degrees}
			\begin{split}
				cl(n-l) \leq \lambda^2c &= \sum_{u \sim v}\sum_{w \sim u}\mathrm{x}_w = d_G(v)c + \sum_{u \sim v}\sum_{w \sim u, w \neq v}\mathrm{x}_w\\
				&\leq d_G(v)c + \sum_{u \in S_1}\sum_{w \sim u, w \in L_1 \cup L_2}\mathrm{x}_w\\
				&+ 2e(S_1)\alpha + 2e(L) + e(L_1, S_1)\alpha + e(N_1, S_2)\alpha.
			\end{split}
		\end{equation}
		
		Using Lemma \ref{lem: extree} for each part, we have:
		$$2e(S_1) \leq (2m-4)n.$$
		$$e(L_1, S_1) \leq (m-2)n.$$
		$$e(N_1, S_2) \leq (m-2)n.$$
		Now, applying these and Lemma~\ref{lem4.1},
		\begin{align*}
			2e(S_1)\alpha + 2e(L) + e(L_1, S_1)\alpha + e(N_1, S_2)\alpha &\leq (2m-4)n\alpha + 2 \binom{|L|}{2}\\ &+ (m-2)n\alpha + (m-2)n\alpha\\ &\leq 5m n\alpha.
		\end{align*}
		
		The last inequality holds as long as $n \ge \frac{2500m^3 l^2}{\alpha^3}$.

		Using $\alpha \leq \frac{\epsilon^2}{10m}$ from \ref{choice of constants} and (\ref{eqn in scaling degrees}), we have:
		\begin{equation}
			\label{eq9}
			cl(n-l) \leq d_G(v)c + e(S_1, L_1\cup L_2) + 5mn\alpha \leq d_G(v)c + e(S_1, L_1\cup L_2) + \frac{\epsilon^2n}{2}.
		\end{equation}
		
		Now we are ready to prove statement (i).  By contradiction, assume for $v \in L'$ with $\mathrm{x}_v = c$, we have $d_G(v) < cn - \epsilon n$.
		This implies,
		$$(l-c+\epsilon)nc{-cl^2}\leq (ln-d_G(v)-l^2)c \leq e(S_1, L_1\cup L_2) + \frac{\epsilon^2n}{2}.$$
		Since $v \in L' $, $c \geq \eta$, and $\eta \geq 2\epsilon$ by (\ref{choice of constants}), we have
		
		\begin{equation}
			\label{eq2}
			e(S_1, L_1\cup L_2) \geq (l-c)nc + \epsilon nc -cl^2 - \frac{\epsilon^2n}{2} \geq (l-1)nc + \frac{\epsilon^2n}{2}.
		\end{equation}
		The last inequality is true whenever $n \ge \frac{cl^2}{\epsilon^2}$, which is true since we take $n \ge \frac{l^2}{\epsilon}$ and $c \le 1$.

		\begin{cl}
			If $\delta := \frac{\epsilon\alpha}{50m^2l}$, then $S_1$ has at least $\delta n$ vertices with a degree at least $l$ in $G[S_1,L_1\cup L_2]$
		\end{cl}
		
		\begin{proof}
			By contradiction, assume at most $\delta n$ vertices in $S_1$ have a degree of at least $l$ in $G[S_1,L_1\cup L_2]$. Since $|S_1|\leq |N_1|=d_G(v)$ and by Lemma~\ref{lem4.1},
			
			$$e(S_1, L_1\cup L_2) < (l-1)|S_1| + |L|\delta n \leq (l-1)|S_1| + \frac{50}{\alpha} m^2 l \frac{\epsilon\alpha}{50m^2l} n \leq (l-1)(c-\epsilon)n + \epsilon n.$$
			Now by (\ref{eq2}),
			$$(l-1)nc - (l-2)n\epsilon > e(S_1, L_1\cup L_2) \geq (l-1)nc + \frac{\epsilon^2n}{2}.$$
			This leads to a contradiction and completes the proof of the claim.
		\end{proof}
		Let $D$ be the set of vertices in $S_1$ with a degree at least $l$ in $G[S_1, L_1 \cup L_2]$. We know that $|D| \geq \delta n$. 
		For any vertex in $D$, the number of options to choose a set of $l$ neighbors is at most 
		${\binom{|L|}{l}} \leq \binom{50m^2l/\alpha}{l}$. 
		Therefore, there exists a set of $l$ vertices in $L_1 \cup L_2$ that have at least $\delta n / \binom{|L|}{l} \geq \frac{\epsilon \alpha n}{50m^2l} / \binom{50m^2l/ \alpha}{l}$ common neighbors in $D$. This quantity is at least $m$ for $n \ge \frac{50m^3l}{\epsilon \alpha} \binom{50m^2l/\alpha}{l}$ and so we have $K_{l+1,m} \subseteq G[S_1, L_1 \cup L_2 \cup \{v\}]$, which contradicts Observation \ref{obs: 0}.
		
		This completes the proof of statement (i). Now we prove part (ii).
		
		To derive the lower bound, we employ (\ref{eq9}). Recall that $\mathrm{x}_z=1$ and our definitions for the distance sets $N_i, L_i, S_i$ are with respect to the vertex $z$. This gives the inequality:
		\begin{equation*}
			l(n-l) \leq d(z) + e(S_1, L_1\cup L_2) + \frac{\epsilon^2n}{2}.
		\end{equation*}
		
		Since $e(S_1,\{z\}\cup L_1\cup L_2) \geq d(z) - |L| + e(S_1, L_1\cup L_2)$, we obtain the lower bound by noticing that $\frac{\epsilon^2n}{2} + |L|+l^2 < l\epsilon n $ which holds by Lemma~\ref{lem4.1}, since $n \ge \max\{n_1,\frac{100m^2 }{\alpha \epsilon}\}$.

		To establish the upper bound, we assume towards a contradiction that for the vertex $z$, we have $e(S_1,\{z\}\cup L_1\cup L_2) > (l+\epsilon)n$. We will use Lemma~\ref{lem: extree} to derive a contradiction by showing that $K_{l+1,m}\subset G$. To this end, we prove the following claim.
		
		\begin{cl}
			If we set $\delta = \frac{\epsilon\alpha}{50m^2l}$, then, with respect to vertex $z$, there exist at least $\delta n$ vertices in $S_1$ with degree $l$ in $G[S_1,L_1\cup L_2]$.
		\end{cl}
		\begin{proof}
			
			Assume, for the sake of contradiction, that less than $\delta n$ vertices in $S_1$ have a degree of at least $l$ in $G[S_1,L_1\cup L_2]$. This implies $e(S_1,L_1\cup L_2)<(l-1)|S_1|+|L|\delta n\leq (l-1)n+\epsilon n$, which contradicts our assumption that $e(S_1,\{z\}\cup L_1\cup L_2)>(l+\epsilon)n$.
		\end{proof}
		
		Hence, there exists a subset $D\subset S_1$ with at least $\delta n$ vertices such that every vertex in $D$ has degree at least $l$ in $G[S_1,L_1\cup L_2]$. For any vertex in $D$, the number of options to choose a set of $l$ neighbors is at most $\binom{|L|}{l} \leq \binom{50m^2l/\alpha}{l}$.
		Therefore, there exists a set of $l$ vertices in $L_1 \cup L_2$ that have at least $\delta n / \binom{|L|}{l} \geq \frac{\epsilon \alpha n}{50m^2l} / \binom{50m^2l/\alpha}{l}$ common neighbors in $D$. This quantity is at least $m$ since $n \ge \frac{50m^3l}{\epsilon \alpha} \binom{50m^2l/\alpha}{l}$), and so we have $K_{l+1,m} \subseteq G[S_1, L_1 \cup L_2 \cup \{z\}]$, which leads to a contradiction. Therefore $e(S_1,\{z\}\cup L_1\cup L_2) \leq (l+\epsilon)n$.
	\end{proof}
	
	In the next lemma we will give a lower bound for the Perron weights of vertices with very large Perron weight and consequently give a lower bound for the degrees of these vertices. In addition, we show that there are exactly $l$ vertices with very large Perron weight. 
	Note that the bound on the order $n_3$ defined in the following lemma will be used for the rest of the section.
	
	\begin{lem} \label{lem: perron for L}
		There exists $n_3 \le \max\{n_2, \frac{200m^2 l}{\alpha \epsilon}\} $ such that for all $n > n_3$ the following holds for $G \in \mathrm{SPEX}(n, T)$ for any fixed $T \in \mathcal{T}_{m,l+1}^{\delta}$. For all vertices $v\in L'$, we have $d_G(v)\geq (1-\frac{1}{8l^2m})n$ and $\mathrm{x}_v\geq (1-\frac{1}{16l^2m})$. Moreover, $|L'|=l$.
		
	\end{lem}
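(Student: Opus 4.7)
My plan is to prove the lemma in three stages, tackling $|L'| \geq l$ first, then the individual Perron weight bound for $v \in L'$, and finally $|L'| \leq l$.

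Stage 1 ($|L'| \geq l$): Assume for contradiction that $|L'| \leq l-1$. I would expand the second-degree eigenvalue equation at $z$,
$$\lambda^2 = d_G(z) + \sum_{v \neq z} \mathrm{x}_v \,|N(z) \cap N(v)|,$$
and split the sum based on whether $v \in L' \setminus \{z\}$ or $v \notin L'$. For the first piece I would use $\mathrm{x}_v \leq 1$ and $|N(z) \cap N(v)| \leq d_G(z) \leq n$; for the second, $\mathrm{x}_v < \eta$ together with the identity $\sum_{v \neq z} |N(z) \cap N(v)| = \sum_{u \sim z}(d_G(u) - 1) \leq 2e(G) \leq 2(m-2)n$ (from Lemma \ref{lem: extree}). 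This gives $\lambda^2 \leq (|L'| + 2\eta(m-2))n$. The constraints on $\eta$ in (\ref{choice of constants}) are tailored so that $2\eta(m-2) < 1$, hence combining with $\lambda^2 \geq l(n-l)$ yields $n \leq l^2/(1 - 2\eta(m-2))$, a bounded quantity — contradicting $n > n_3$.

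Stage 2 (Perron weight bound): Given $|L'| \geq l$, for $v \in L'$ with $c = \mathrm{x}_v$ I would apply the second-degree eigenvalue equation at $v$:
$$\lambda^2 c = c\,d_G(v) + \sum_{w \in L' \setminus \{v\}} \mathrm{x}_w\,|N(v) \cap N(w)| + \sum_{w \notin L',\, w \neq v} \mathrm{x}_w\,|N(v) \cap N(w)|.$$
For the middle sum, I would plug in the lower bound $|N(v) \cap N(w)| \geq d_G(v) + d_G(w) - n \geq (c + c_w - 1 - 2\epsilon)n$ (from Lemma \ref{lem: degL'}(i)), which furnishes at least $l-1$ non-negligible terms thanks to Stage 1. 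Pairing this against the analogue of Stage 1's upper bound $\lambda^2 \leq (|L'| + 2\eta(m-2))n$, then carefully exploiting the specific constants $\tfrac{1}{5}$, $\tfrac{1}{16lm}$, $\tfrac{1}{20 l^2 m}$ appearing in the definition of $\eta$ via algebraic manipulation, I expect to extract $c \geq 1 - 1/(16l^2 m)$. The degree conclusion $d_G(v) \geq (1 - 1/(8l^2 m))n$ then follows immediately from Lemma \ref{lem: degL'}(i) combined with $\epsilon < 1/(16 l^2 m)$.

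Stage 3 ($|L'| \leq l$): Suppose $|L'| \geq l+1$. Stage 2 supplies $l+1$ vertices with $d_G(v_i) \geq (1 - 1/(8 l^2 m))n$, so by inclusion–exclusion
$$\Big|\bigcap_{i=0}^{l} N(v_i)\Big| \geq \sum_{i=0}^{l} d_G(v_i) - ln \geq n - (l+1)\cdot\frac{n}{8 l^2 m} \geq m$$
for $n$ above the stated threshold. This produces $K_{l+1, m} \subseteq G$, and Observation \ref{obs: 0} then gives $T \subseteq G$, contradicting $T$-freeness.

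The main obstacle is Stage 2. The most tempting identity, $\lambda(1-c) = \sum_{u \in N(z) \setminus N(v)} \mathrm{x}_u - \sum_{u \in N(v) \setminus N(z)} \mathrm{x}_u$, only delivers estimates of the form $\lambda(1-c) \leq |L| + n\alpha$, which are useless because $n\alpha = \Theta(n)$ dwarfs $\lambda = \Theta(\sqrt{n})$ for large $n$ — there is no way to convert this into a constant-size (i.e., $n$-independent) upper bound on $1-c$. Instead, one seems forced to trap the weighted second-moment sum $\sum_{w \in L'} c_w(c_w - 2\epsilon)$ inside a very narrow window cut out by the simultaneous upper and lower bounds $l(n-l) \leq \lambda^2 \leq (l + 2\eta(m-2))n$, and then use that aggregate constraint to force every individual $c_w$ to lie within $1/(16 l^2 m)$ of $1$. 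Getting the arithmetic to yield exactly the constant $1/(16 l^2 m)$ — rather than some weaker expression — is what makes the precise numerical form of the definitions of $\eta$ and $\epsilon$ in (\ref{choice of constants}) indispensable.
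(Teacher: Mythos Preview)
Your Stages 1 and 3 are sound (Stage 1 is in fact slightly cleaner than the paper's version, which routes through inequality (\ref{eq9})). The genuine gap is Stage 2, and the obstacle is structural, not just arithmetic. Your proposed upper bound $\lambda^2 \le (|L'| + 2\eta(m-2))n$ is only informative once you know $|L'| \le l$, but that is Stage 3, which in turn depends on the degree bound from Stage 2 --- so the argument is circular. Independently, the ``lower bound'' you propose for the middle sum via $|N(v) \cap N(w)| \ge (c + c_w - 1 - 2\epsilon)n$ is not actually a lower bound: for $w \in L'$ you only know $c_w \ge \eta$, so $c + c_w - 1 - 2\epsilon$ may well be negative for every term, and the inequality collapses. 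Your fallback plan of trapping $\sum_{w \in L'} c_w(c_w - 2\epsilon)$ in a narrow window cannot get off the ground for the same reason: both endpoints of the window depend on $|L'|$.

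The paper avoids this circularity by never working at $v$. Instead it argues at $z$ and invokes the \emph{upper} bound of Lemma~\ref{lem: degL'}(ii), namely $e(S_1, \{z\} \cup L_1 \cup L_2) \le (l+\epsilon)n$, which is the quantitative consequence of $K_{l+1,m}$-freeness that your plan does not use. Refining (\ref{eq9}) at $z$ by isolating the single bad vertex $v$ (with $\x_v < 1 - \tfrac{1}{16 l^2 m}$) from the rest of $\{z\} \cup L_1 \cup L_2$ yields
\[
l(n-l) \le (l+\epsilon)n - |S_1(z) \cap N(v)| + |N(z) \cap N(v)|\Bigl(1 - \tfrac{1}{16 l^2 m}\Bigr) + \tfrac{\epsilon^2 n}{2},
\]
which after rearrangement forces $|N(z) \cap N(v)| < 32 l^2 m \epsilon n$. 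But Lemma~\ref{lem: degL'}(i) gives $d_G(z) \ge (1-\epsilon)n$ and $d_G(v) \ge (\eta - \epsilon)n$, so $|N(z) \cap N(v)| \ge (\eta - 2\epsilon)n$, and the choice $\epsilon < \eta/(32 l^2 m + 2)$ in (\ref{choice of constants}) delivers the contradiction. Note that this argument is entirely independent of the value of $|L'|$, which is what breaks the circularity.
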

	
	\begin{proof}
		Assume $n\geq \max\{n_2, \frac{200m^2 l}{\alpha \epsilon}\}$.
		
		If we show that $\mathrm{x}_v\geq (1-\frac{1}{16l^2m})$, then by using Lemma~\ref{lem: degL'} and $\epsilon\leq \frac{1}{16l^2m}$ from (\ref{choice of constants}), we can prove that $d_G(v)\geq (1-\frac{1}{8l^2m})n$. Consequently, for every vertex $v\in L'$ we have $d_G(v)\geq (1-\frac{1}{8l^2m})n$. Suppose $|L'| \ge l+1$, then since $n \ge \frac{200m^2}{\alpha \epsilon} \ge \frac{(l+1+m)8l^2 m}{8l^2 m - l - 1}$, there are at least $l+1$ vertices in $L'$ that have $m$ common neighbors outside $L'$, and $G$ contains a $K_{l+1,m}$, which contradicts Observation~\ref{obs: 0}. Thus, $|L'|\leq l$ as desired.
		
		Furthermore, if $|L'|\leq l-1$, then using Lemma~\ref{lem: degL'} part (ii) and applying (\ref{eq9}) to the vertex $z$, with $n \ge n_2 > \frac{(2l-2)32l^2}{16l^2 -5}$ we have
		\begin{equation*}
			l(n-l)\leq \lambda^2\leq e(S_1,L'\setminus L)+e(S_1,L_1\cup L_2)\eta +\frac{\epsilon^2n}{2}<(l-1)(n-l+1)+(l+\epsilon)n\eta+\frac{\epsilon^2n}{2}<l(n-l),
		\end{equation*}
		where the last inequality holds by $\epsilon\leq \eta/2$ and $\eta \leq 1/2l$ from (\ref{choice of constants}), leading to a contradiction.
		Thus, $|L'|=l$.
		
		To derive a contradiction, assume there is a vertex $v\in L'$ such that $\mathrm{x}_v<(1-\frac{1}{16l^2m})$. Refining (\ref{eq9}) with respect to the vertex $z$, we get:
		\begin{equation*}
			\begin{aligned}
				l(n-l)\le&\lambda^2\le e(S_1(z), \{z\} \cup L_1(z)\cup L_2(z)\setminus \{v\})+|N_1(z)\cap N_1(v)|\mathrm{x}_v+\frac{\epsilon^2n}{2}\\
				<& (l+\epsilon)n-|S_1(z)\cap N_1(v)|+|N_1(z)\cap N_1(v)|\left(1-\frac{1}{16l^2m}\right)+\frac{\epsilon^2n}{2}\\
				=&ln+\epsilon n+|L_1(z)\cap N_1(v)|-|N_1(z)\cap N_1(v)|\frac{1}{16l^2m}+\frac{\epsilon^2n}{2}.
			\end{aligned}
		\end{equation*}  
		Rearranging the above inequality and using the upper bound for $|L|$, we have $\frac{|N_1(z)\cap N_1(v)|}{16l^2m}< \epsilon n +\frac{\epsilon^2n}{2}+|L| +l^2\leq 2\epsilon n$. Which is true since  $n \ge\frac{200m^2 l}{\alpha \epsilon} > \frac{2}{\epsilon}(l^2 + \frac{50m^2 l}{\alpha})$. However, $v\in L'$, so $\mathrm{x}_v\geq \eta$ and $d_G(v)\geq (\eta-\epsilon)n$, and so $|N_1(z)\cap N_1(v)|\geq(\eta -2\epsilon)n > 32l^2m\epsilon n$ by $\epsilon< \frac{\eta}{32l^2m + 2}$, a contradiction.
	\end{proof}
	
	The common neighborhood of the vertices in $L'$ has at least $(1-\frac{1}{8lm})n $ vertices. Let $R$ be the set of vertices in this common neighborhood. Let $E$ be the set of vertices not in $L'$ and not in $R$. So $|E|\leq \frac{n}{8lm}$. We will show that $E=\emptyset$ and thus $G$ contains a large complete bipartite subgraph $K_{l, n - l}$.
	
	\begin{lem}
		\label{lem4.5}
		For all $n > n_3$ the following holds for $G \in \mathrm{SPEX}(n, T)$ for any fixed $T \in \mathcal{T}_{m,l+1}^{\delta}$. For any vertex $v\in V(G)$, the Perron weight in the neighborhood of $v$ satisfies $\sum_{w\sim v}\mathrm{x}_w\geq l-\frac{1}{16lm}$.

	\end{lem}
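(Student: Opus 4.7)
The strategy is an edge-swap argument driven by the Rayleigh quotient. By the eigenvalue equation $\lambda(G)\mathrm{x}_v = \sum_{w \sim v}\mathrm{x}_w$, the claimed bound is equivalent to $\lambda(G)\mathrm{x}_v \ge l - \tfrac{1}{16lm}$. For $v \in L'$, Lemma~\ref{lem: perron for L} gives $\mathrm{x}_v \ge 1 - \tfrac{1}{16l^2m}$, and combining this with $\lambda(G) \ge \sqrt{l(n-l)}$ from Lemma~\ref{lem2**} yields the bound for all $n$ sufficiently large. Thus I focus on $v \notin L'$ and assume for contradiction that $\sum_{w \sim v}\mathrm{x}_w < l - \tfrac{1}{16lm}$.

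I would then define $G'$ to be the graph obtained from $G$ by deleting every edge incident to $v$ and adding an edge from $v$ to each vertex of $L'$. Plugging the Perron vector $\mathrm{x}$ of $G$ into the Rayleigh quotient of $G'$ and computing the change edge by edge yields
$$\mathrm{x}^\top A(G')\mathrm{x} - \mathrm{x}^\top A(G)\mathrm{x} \;=\; 2\mathrm{x}_v\!\left(\sum_{w \in L'}\mathrm{x}_w \;-\; \sum_{w \sim v}\mathrm{x}_w\right).$$
By Lemma~\ref{lem: perron for L}, $\sum_{w \in L'}\mathrm{x}_w \ge l(1 - \tfrac{1}{16l^2m}) = l - \tfrac{1}{16lm}$, which strictly exceeds the assumed upper bound on $\sum_{w \sim v}\mathrm{x}_w$, so the parenthesized quantity is positive. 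Since we may assume $\mathrm{x}_v > 0$ (WLOG $G$ is connected on the support of $\mathrm{x}$), we conclude $\lambda(G') > \lambda(G)$.

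The remaining task is to show $G'$ is $T$-free, which contradicts $G \in \mathrm{SPEX}(n, T)$. Suppose an embedding $\phi\colon V(T) \hookrightarrow V(G')$ exists. If $v \notin \phi(V(T))$, then the embedding lives entirely inside $G' - v = G - v \subseteq G$, contradicting $T \not\subseteq G$. Otherwise $\phi(x) = v$ for a unique $x \in V(T)$, and since every neighbor of $v$ in $G'$ lies in $L'$, we have $\phi(N_T(x)) \subseteq L'$. By Lemma~\ref{lem: perron for L}, each vertex of $L'$ has degree at least $(1 - \tfrac{1}{8l^2m})n$ in $G$, so the common $G$-neighborhood of $\phi(N_T(x))$ has size at least $n - l\cdot\tfrac{n}{8l^2m} = n(1 - \tfrac{1}{8lm})$. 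For $n$ sufficiently large this exceeds $m - 1$, so there exists $u \notin \phi(V(T))$ in this common neighborhood. Defining $\psi(y) = \phi(y)$ for $y \neq x$ and $\psi(x) = u$ then yields an embedding of $T$ into $G$, the desired contradiction.

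The main obstacle is precisely the $T$-freeness verification in the case where a hypothetical copy of $T$ in $G'$ uses the vertex $v$: the embedding must be repaired inside $G$ by swapping $v$ for a suitable replacement vertex. This is where the strong common-degree property of $L'$ from Lemma~\ref{lem: perron for L} is indispensable, supplying an ample pool of common neighbors of any subset of $L'$ from which to draw the replacement and survive the injectivity constraint imposed by $\phi(V(T))$.
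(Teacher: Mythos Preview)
Your proof is correct and follows essentially the same approach as the paper's: the eigenvalue equation handles $v\in L'$, and an edge-swap to $L'$ with a Rayleigh-quotient comparison handles the rest, with the $T$-freeness of the modified graph verified by the same common-neighborhood replacement argument. The only cosmetic difference is that the paper splits the case $v\notin L'$ into $v\in R$ (handled directly, since such $v$ is already adjacent to all of $L'$) and $v\in E$, whereas your contradiction argument absorbs both cases at once—the $R$ case being vacuous because the hypothesis $\sum_{w\sim v}\mathrm{x}_w < l-\tfrac{1}{16lm}$ cannot hold there.
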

	\begin{proof}
		By Lemma~\ref{lem: perron for L}, if $w\in L'$we have  $\mathrm{x}_w\geq (1-\frac{1}{16l^2m})$. Therefore, If $v\in L'$
		\begin{equation*}
			\sum_{w\sim v}\mathrm{x}_w=\lambda \mathrm{x}_v\geq \lambda\left(1-\frac{1}{16l^2m}\right)\geq l-\frac{1}{16lm}.
		\end{equation*}
		If $v\in R$, then 
		\begin{equation*}
			\sum_{w\sim v}\mathrm{x}_w\geq\sum_{w\sim v, w\in L'}\mathrm{x}_w\geq l \left(1-\frac{1}{16l^2m}\right)\geq l-\frac{1}{16lm}.
		\end{equation*}
		Finally, let $v\in E$ where $E$ is the set of vertices not in $L'$ and not in $R$. If $\sum_{w\sim v}\mathrm{x}_w < l-\frac{1}{16lm}$, consider the graph $H$ obtained from $V(G)$ by deleting all edges adjacent to $v$ and adding the edges $uv$ for all $u\in L'$. Since $\sum_{w\sim v}\mathrm{x}_w < l-\frac{1}{16lm}$, we have $x^TA(H)x>x^TA(G)x$, and by the Rayleigh principle, $\lambda(H)>\lambda(G)$. However, there are no new trees $T\in\mathcal{T}_{m,l+1}$ that have isomorphic copies in $H$ but not in $G$. To observe this, assume to the contrary that $T$ is a new tree having an isomorphic copy in $H$ but not in $G$. Then $T$ has $m$ vertices $v=v_1,v_2,...,v_{m}$ and $v$ has at most $l$ neighbors in $T$ all of which lie in $L'$. Since $n \ge n_3$ the common neighborhood of vertices in $L'$ have at least $(1-\frac{1}{8lm})n>m$ vertices. So $T$ must have an isomorphic copy in $G$, where we use a vertex outside of $\{v_1, \ldots, v_m\}$ in place of $v = v_1$, a contradiction. Thus $H$ has no new trees. But since $\lambda(H) > \lambda(G)$, this contradicts the fact that $G \in \SPEX(n, T)$. Hence, $\sum_{w\sim v}\mathrm{x}_w \ge l-\frac{1}{16lm}$ for all $v \in E$, and we are done.
	\end{proof}

	\begin{lem}
		\label{lem: T - SPEX contains k_l,n-l}
		For all $n >n_3$, the following holds for $G \in \mathrm{SPEX}(n, T)$ for any fixed $T \in \mathcal{T}_{m,l+1}^{\delta}$. The set $E$ is empty and $G$ contains the complete bipartite graph $K_{l,n-l}$.
		
	\end{lem}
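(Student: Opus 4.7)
The plan is to argue by contradiction: suppose some $v \in E$ exists. I would construct the graph $H$ from $G$ by deleting all edges incident to $v$ and adding every edge $vu$ for $u \in L'$, so $N_H(v) = L'$. The goal is to show that $H$ is $T$-free while simultaneously $\lambda(H) > \lambda(G)$, contradicting $G \in \SPEX(n,T)$; once $E = \emptyset$ is established, the structure $V(G) = L' \sqcup R$ with $|L'| = l$ and every vertex of $R$ adjacent to every vertex of $L'$ yields $K_{l, n-l} \subseteq G$ immediately.

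For the $T$-freeness of $H$, I would use the $R$-replacement argument already in the proof of Lemma~\ref{lem4.5}: any copy of $T$ in $H$ must use $v$ (else the copy lies in $G$), so $\phi(v') = v$ for some $v' \in V(T)$, and $\phi(N_T(v')) \subseteq N_H(v) = L'$. For $n > n_3$ we have $|R| \geq (1 - 1/(8lm))n > m$, so some $v^* \in R \setminus \phi(V(T))$ exists; since $v^*$ is adjacent in $G$ to every vertex of $L'$, swapping $v^*$ for $v$ in $\phi$ produces $T \subseteq G$, contradicting the $T$-freeness of $G$.

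For the strict spectral improvement, let $\mathrm{x}$ be the Perron vector of $G$ with $\|\mathrm{x}\|_\infty = 1$. The Rayleigh-quotient change is
\[
\mathrm{x}^T A(H) \mathrm{x} - \mathrm{x}^T A(G) \mathrm{x} = 2 \mathrm{x}_v \Big(\sum_{u \in L' \setminus N_G(v)} \mathrm{x}_u - \sum_{w \in N_G(v) \setminus L'} \mathrm{x}_w\Big).
\]
Since $v \notin R$, the set $L' \setminus N_G(v)$ contains some $u_0$ contributing $\mathrm{x}_{u_0} \geq 1 - 1/(16l^2 m)$ to the positive sum (by Lemma~\ref{lem: perron for L}), while each term $\mathrm{x}_w$ in the negative sum satisfies $\mathrm{x}_w < \eta$ (since $w \notin L'$). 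I expect to combine these with the eigenvalue identity $\lambda \mathrm{x}_v = \sum_{w \sim v} \mathrm{x}_w$, the bound $\mathrm{x}_v < \eta$, and the parameter constraints in (\ref{choice of constants}) to force this difference to be strictly positive. Should the borderline case of equality arise, uniqueness of the Perron vector forces $\mathrm{x}$ to be the Perron vector of $H$ as well (whenever $\lambda(H) = \lambda(G)$), and then $A(G) \mathrm{x} = A(H) \mathrm{x}$ coordinate-wise combined with $\mathrm{x}_v > 0$ yields $N_G(v) = L' = N_H(v)$, contradicting $v \in E$.

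The main obstacle will be the regime where $d_G(v)$ is very large, so that the cumulative weight $\sum_{w \in N_G(v) \setminus L'} \mathrm{x}_w$ could in principle overwhelm the contribution from $L' \setminus N_G(v)$ and flip the sign of the Rayleigh-quotient change. To rule this out I would either extract a sharper upper bound on $d_G(v)$ for $v \in E$ using the structural lemmas and the $T$-free constraint, or fall back to a more surgical modification that only removes the low-Perron-weight neighbors of $v$, for which the same $R$-replacement technique still delivers $T$-freeness of the modified graph.
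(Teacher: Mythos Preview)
Your approach has a genuine gap in the spectral-improvement step. The modification $H$ you describe is exactly the one used in the proof of Lemma~\ref{lem4.5}, and that lemma already extracted the full strength of this construction: it yields $\lambda \mathrm{x}_v = \sum_{w\sim v}\mathrm{x}_w \ge l - \tfrac{1}{16lm}$, but \emph{not} the strict inequality $\sum_{u\in L'}\mathrm{x}_u > \lambda\mathrm{x}_v$ that your Rayleigh-quotient computation requires. Concretely, the difference you need positive is
\[
\sum_{u\in L'\setminus N_G(v)}\mathrm{x}_u - \sum_{w\in N_G(v)\setminus L'}\mathrm{x}_w \;=\; \sum_{u\in L'}\mathrm{x}_u - \lambda\mathrm{x}_v,
\]
and nothing prevents $\lambda\mathrm{x}_v$ from exceeding $l$. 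Since $\mathrm{x}_v$ is only bounded by $\eta$ and $\lambda \le \sqrt{(2m-4)n}$, the quantity $\lambda\mathrm{x}_v$ can be of order $\sqrt{n}$, overwhelming the at-most-$l$ positive contribution. Your first fallback (bounding $d_G(v)$) does not help because the issue is the total \emph{weight} of $v$'s neighbors in $E$, not their number; your second fallback (removing only $E$-neighbors of $v$) breaks the $R$-replacement argument for $T$-freeness, since the replacement vertex $v^*\in R$ need not be adjacent to the retained $R$-neighbors of $v$.

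The paper takes an entirely different route. It first bounds the $R$-degree of any $v\in E$ by $m-l-1$ (else $L'\cup\{v\}$ against those $R$-neighbors gives $K_{l+1,m-l-1}\supseteq T$), so that
\[
\lambda\mathrm{x}_v \le (l-1) + (m-l-1)\eta + \sum_{u\sim v,\,u\in E}\mathrm{x}_u.
\]
Combined with $\lambda\mathrm{x}_v \ge l - \tfrac{1}{16lm}$ from Lemma~\ref{lem4.5}, this forces $\sum_{u\sim v,\,u\in E}\mathrm{x}_u \ge \tfrac{4}{5l}\,\lambda\mathrm{x}_v$ for every $v\in E$. Applying Lemma~\ref{lem: evalue} to $B=A(G[E])$ then gives $\lambda(G[E])\ge \tfrac{4}{5l}\lambda \ge \tfrac{4}{5}\sqrt{(n-l)/l}$, which contradicts $\lambda(G[E]) \le \sqrt{(2m-4)|E|} \le \tfrac{1}{2}\sqrt{n/l}$ via the bound $|E|\le n/(8lm)$. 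The missing idea in your proposal is precisely this passage to the induced subgraph $G[E]$.
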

	\begin{proof}
		Assume to the contrary $E\neq \emptyset$.
		Any vertex $r\in R$ has $x_r<\eta$. Therefore, for any vertex $v\in E$, 
		\begin{equation*}
			\lambda \mathrm{x}_v=\sum_{u\sim v}\mathrm{x}_u=\sum_{u\sim v, u\in L'}\mathrm{x}_u+\sum_{u\sim v, u\in R}\mathrm{x}_u+\sum_{u\sim v, u\in E}\mathrm{x}_u\leq l-1+(m-l-1)\eta +\sum_{u\sim v, u\in E}\mathrm{x}_u,
		\end{equation*} 
		since $v$ can have at most $l-1$ neighbors in $L'$ and at most $m-l-1$ neighbors in $R$.
		
		From Lemma~\ref{lem4.5},
		\begin{equation*}
			\frac{\sum_{u\sim v, u\in E} \x_u}{\lambda \mathrm{x}_v}\geq \frac{\lambda \mathrm{x}_v-(l-1)-(m-l-1)\eta}{\lambda \mathrm{x}_v}\geq 1-\frac{(l-1)+(m-l-1)\eta}{l-\frac{1}{16lm}}\geq \frac{4}{5l},
		\end{equation*}
		where the last inequality comes from $\eta \leq \frac{\frac{1}{5}-\frac{1}{16lm}+\frac{1}{20l^2m}}{m-l-1}$ by (\ref{choice of constants}). Now consider the matrix $B=A(G[E])$ and vector $y:=x_{|E}$. We see for any vertex $v\in E$
		\begin{equation*}
			B\mathrm{y}_v=\sum_{u\sim v,u\in E}\mathrm{x}_u\geq \frac{4}{5l}\lambda \mathrm{x}_v=\frac{4}{5l}\lambda \mathrm{y}_v.
		\end{equation*}
		Hence, by Lemma~\ref{lem: evalue}, we have that $\lambda(B)\geq\frac{4}{5l}\lambda \geq \frac{4}{5l}\sqrt{l(n-l)}=\frac{4}{5}\sqrt{\frac{(n-l)}{l}}$. On the other hand by Lemma~\ref{lem2**} we have $\lambda(B)\leq\sqrt{(2m-4)|E|}\leq \sqrt{2m\frac{n}{8lm}}= \frac{1}{2}\sqrt{\frac{n}{l}}$ which is contradiction since $n>2l+2$.
	\end{proof}
	This completes the proof of Theorem~\ref{thm: T - SPEX contains k_l,n-l} with 
	\begin{equation}\label{threshold for n}
		N=n_3 < \max\left\{\frac{200m^6}{\alpha^4},\frac{2500m^3 l^2}{\alpha^3}, \frac{l^2}{\epsilon^2}, \frac{50m^3l}{\epsilon \alpha} \binom{50m^2l/\alpha}{l}, \frac{200m^2 l}{\alpha \epsilon}\right\}.
	\end{equation}
	
	Recall that $l \ge 1$, and since $m \ge 2l+2$, we have that $m \ge 4$.
	Setting $\eta = \frac{1}{6m}, \epsilon= \frac{1}{200m^4}, \alpha = \frac{1}{400000m^9}$, and substituting into (\ref{threshold for n}) we obtain 
	\begin{equation}
		\begin{split}
			N = n_3 &< \max\left\{10^{25}m^{42}, 10^{21} m^{32}, 10^5 m^{10}, 4 \times 10^9 m^{17} \times (2 \times 10^7 m^{12})^{l}, 10^{11} m^{16}\right\}\\ 
			&\leq  m^{(12+o_l(1))l+42} \le m^{O(l)}.          
		\end{split}
	\end{equation}
	
	Note that this upper bound on $N$ is dependent on the proof of Lemma~\ref{lem: degL'}. Further we have not attempted to optimize the bound and have replaced $l$ in (\ref{threshold for n}) throughout by $m$ except in the exponent, to get the final expression.
	
	\section{Spectral extremal theorems for a fixed tree}\label{sec: spectral fixed tree}

	\subsection{Proof of Theorem~\ref{thm: bounds for T - spex}}
	
	We now state a few lemmas which will be useful to prove the results in this section.
	\begin{lem}\cite{tait2019colin}
		Let $H_1$ be a $d$-regular graph on $n_1$ vertices and $H_2$ be a graph with maximum degree $k$ on $n_2$ vertices. Let $H$ be the join of $H_1$ and $H_2$. Define
		\[B:= \begin{bmatrix}
			d & n_2 \\
			n_1 & k
		\end{bmatrix}.\]
		Then $\lambda_1(H) \le \lambda_1(B)$ with equality if and only if $H_2$ is $k$-regular.
	\end{lem}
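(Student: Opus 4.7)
The plan is a standard equitable/almost-equitable partition argument, using Collatz--Wielandt applied to the $2\times 2$ quotient matrix $B$. The key asymmetry in the hypothesis is that $H_1$ is genuinely $d$-regular while $H_2$ only satisfies $\Delta(H_2)\le k$, and this asymmetry is precisely what pushes the inequality in the desired direction: the $H_1$-summed eigenvalue equation will produce an \emph{equality}, while the $H_2$-summed equation will produce an \emph{inequality} of the correct sign.

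First I would let $\mathbf{x}$ be the positive Perron eigenvector of $A(H)$ with eigenvalue $\lambda=\lambda_1(H)$; positivity holds since $H=H_1\vee H_2$ is connected. Introduce the averaged $2$-vector $\mathbf{y}=(y_1,y_2)^T$ where $y_1:=\tfrac{1}{n_1}\sum_{v\in V(H_1)}x_v$ and $y_2:=\tfrac{1}{n_2}\sum_{v\in V(H_2)}x_v$. Summing the eigenvalue equation $\lambda x_v=\sum_{u\sim v}x_u$ over $v\in V(H_1)$ and using that every such vertex has $H_1$-degree exactly $d$ and is joined to all $n_2$ vertices of $H_2$, one obtains the clean identity $\lambda y_1=d\,y_1+n_2\,y_2$. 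Summing instead over $v\in V(H_2)$, the cross-contribution from $V(H_1)$ is again exact and equals $n_1y_1$, while the within-$H_2$ contribution equals $\sum_{v\in V(H_2)} d_{H_2}(v)\,x_v$, which is only bounded above by $k\sum_v x_v = k n_2 y_2$ via $\Delta(H_2)\le k$. This yields $\lambda y_2\le n_1y_1+k y_2$. Concatenating the two relations gives $B\mathbf{y}\ge\lambda\mathbf{y}$ componentwise, with $\mathbf{y}>0$.

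Next I would invoke the Collatz--Wielandt principle for nonnegative matrices (the non-symmetric analogue of Lemma~\ref{lem: evalue}): if $B\ge 0$ and there exists $\mathbf{y}>0$ with $B\mathbf{y}\ge\lambda\mathbf{y}$ componentwise, then $\lambda_1(B)\ge\lambda$. Since $B$ is only $2\times 2$, one can instead simply verify this by writing $\lambda_1(B)$ as the larger root of $(x-d)(x-k)=n_1n_2$. Either way, $\lambda_1(H)\le\lambda_1(B)$.

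For the equality case, if $\lambda_1(H)=\lambda_1(B)$ then the second relation must be tight, i.e.\ $\sum_v d_{H_2}(v)\,x_v = k n_2 y_2$; since $x_v>0$ for every $v$, this forces $d_{H_2}(v)=k$ for every $v\in V(H_2)$, so $H_2$ is $k$-regular. Conversely, if $H_2$ is $k$-regular then $\{V(H_1),V(H_2)\}$ is an equitable partition of $V(H)$ with quotient matrix $B$, so $\lambda_1(B)$ lifts to an eigenvalue of $A(H)$ via the obvious piecewise-constant eigenvector, and by Perron--Frobenius this eigenvalue must be $\lambda_1(H)$. The main (and rather minor) obstacle in this plan is that Lemma~\ref{lem: evalue} as stated assumes symmetry of $B$, whereas here $B$ is non-symmetric unless $n_1=n_2$; one therefore either cites the Collatz--Wielandt bound for general nonnegative matrices, or equivalently notes that $B$ is similar to the symmetric matrix $\bigl[\begin{smallmatrix}d & \sqrt{n_1n_2}\\ \sqrt{n_1n_2} & k\end{smallmatrix}\bigr]$ via a diagonal conjugation and applies Lemma~\ref{lem: evalue} to that symmetric matrix with the correspondingly rescaled test vector.
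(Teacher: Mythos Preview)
The paper does not give its own proof of this lemma; it is simply cited from \cite{tait2019colin} and used as a black box. Your argument is correct and is exactly the standard quotient-matrix/Collatz--Wielandt proof one would expect: sum the eigenvalue equation over each part, use $d$-regularity of $H_1$ to get equality in the first coordinate and $\Delta(H_2)\le k$ to get an inequality in the second, and then apply Perron--Frobenius to the $2\times2$ matrix $B$. Your handling of the non-symmetry of $B$ (either via Collatz--Wielandt directly or by conjugating to the symmetric matrix $\bigl[\begin{smallmatrix}d & \sqrt{n_1n_2}\\ \sqrt{n_1n_2} & k\end{smallmatrix}\bigr]$) is the right fix for the fact that Lemma~\ref{lem: evalue} as stated requires symmetry.

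One small point worth making explicit in the equality case: the step ``if $\lambda_1(H)=\lambda_1(B)$ then the second relation must be tight'' relies on the fact that for an \emph{irreducible} nonnegative matrix $B$, the relation $B\mathbf{y}\ge\rho(B)\mathbf{y}$ with $\mathbf{y}>0$ forces $B\mathbf{y}=\rho(B)\mathbf{y}$. You invoke this implicitly; since $n_1,n_2>0$, $B$ is indeed irreducible, so this is fine, but it is the one place where a reader might pause.
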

	
	In fact, this result implies the following lemma which appears in \cite{fang2024spectral} as follows.
	
	\begin{lem}\cite{fang2024spectral}
		\label{upper bounds on spectral radius for joins with max degrees}
		Let $H_1$ be a graph on $n_0$ vertices with maximum degree $d$ and $H_2$ be a graph on $n- n_0$ vertices with maximum degree $d'$. $H_1$ and $H_2$ may have loops or multiple edges, where loops add $1$ to the degree. Let $H = H_1 \vee H_2$. Define
		\[B:= \begin{bmatrix}
			d & n-n_0 \\
			n_0 & d'
		\end{bmatrix}.\]
		Then $\lambda_1(H) \le \lambda_1(B)$.
	\end{lem}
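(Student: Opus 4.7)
The plan is to use the Perron eigenvector of $H$ together with the join structure to compare $\lambda_1(H)$ with $\lambda_1(B)$ via a Collatz--Wielandt argument, mirroring the proof of the preceding lemma of \cite{tait2019colin} but replacing the regularity equalities with max-degree inequalities.

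First I would let $\mathbf{x}$ be a Perron eigenvector of $A(H)$ corresponding to $\lambda := \lambda_1(H)$. Since every vertex of $V(H_1)$ is adjacent (in $H$) to every vertex of $V(H_2)$, $H$ is connected and we may take $\mathbf{x} > 0$ entrywise. Let
\[a_1 := \max_{v \in V(H_1)} x_v, \qquad a_2 := \max_{v \in V(H_2)} x_v,\]
and pick $u_1 \in V(H_1)$, $u_2 \in V(H_2)$ attaining these maxima.

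Next I would apply the eigenvalue equation $\lambda x_{u_i} = \sum_w A(H)_{u_i w}\, x_w$ at each $u_i$. At $u_1$, the row sum of $A(H_1)$ at $u_1$ equals the degree $d_{H_1}(u_1) \le d$ (with the convention that loops contribute $1$ and each entry is the multiplicity of the edge), so the contribution from $V(H_1)$ is at most $d\cdot a_1$; the contribution from $V(H_2)$ is at most $(n-n_0)\cdot a_2$, because $u_1$ is adjacent to all of $V(H_2)$ with multiplicity one. Thus
\[\lambda\, a_1 \le d\,a_1 + (n-n_0)\,a_2,\]
and symmetrically
\[\lambda\, a_2 \le n_0\,a_1 + d'\,a_2.\]
Setting $\mathbf{y} = (a_1, a_2)^\top > 0$, these two inequalities are precisely $B\mathbf{y} \ge \lambda \mathbf{y}$ entrywise.

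Finally I would invoke the Collatz--Wielandt bound for nonnegative (not necessarily symmetric) matrices: if $B$ has nonnegative entries and $B\mathbf{y} \ge \lambda \mathbf{y}$ for some strictly positive $\mathbf{y}$, then $\lambda_1(B) = \rho(B) \ge \lambda$. Combining this with the display above yields $\lambda_1(H) \le \lambda_1(B)$, as required. The only real subtlety is that $B$ is not symmetric in general, so one cannot directly quote Lemma~\ref{lem: evalue}; one instead uses the standard non-symmetric Perron--Frobenius inequality, which is what makes the passage from the regular case of \cite{tait2019colin} to the max-degree case go through smoothly. A minor bookkeeping point is making sure that loops and multi-edges are counted consistently with the stated convention, but this is automatic once one reads the row sums of $A(H_1)$ and $A(H_2)$ as degrees.
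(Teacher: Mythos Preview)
Your argument is correct: taking the coordinate-wise maxima of a positive Perron eigenvector over the two parts and applying the Collatz--Wielandt inequality for nonnegative matrices is exactly the right way to handle the non-symmetric quotient matrix $B$, and your bookkeeping with loops and multi-edges is fine since the row sums of $A(H_i)$ are precisely the degrees under the stated convention.

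The paper does not give a self-contained proof of this lemma; it cites it from \cite{fang2024spectral} and remarks that it is implied by the preceding lemma of \cite{tait2019colin}. That implication goes by a reduction rather than your direct argument: if $H_1$ has maximum degree $d$, one adds loops (each contributing $1$ to the degree) at every vertex of $H_1$ until it becomes $d$-regular, obtaining $H_1' \supseteq H_1$; then $\lambda_1(H_1 \vee H_2) \le \lambda_1(H_1' \vee H_2) \le \lambda_1(B)$ by monotonicity of the spectral radius under edge-addition and the regular case. Your direct Perron-vector argument is cleaner and avoids the detour through an auxiliary multigraph, and it is in fact the same computation that underlies the proof in \cite{tait2019colin}; the reduction route has the minor advantage of quoting the regular case as a black box.
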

	Further we have equality in the above lemma if and only if $H_1$ is a $d$-regular graph and $H_2$ is a $d'$-regular graph.
	
	The proof of Theorem~\ref{thm: bounds for T - spex} can now be completed very easily with the help of Theorem~\ref{thm: T - SPEX contains k_l,n-l} and Theorem \ref{thm: max degrees is G[R]}.
	
	\begin{proof}[Proof of Theorem~\ref{thm: bounds for T - spex}]
		
		We know from Theorem~\ref{thm: T - SPEX contains k_l,n-l} that for any $T \in \tmld$, there exists some sufficiently large positive integer $N$, such that for all $n \ge N$, we have $K_{l, n-l}$ is contained in any $G \in \SPEX(n, T)$. So, $G = H_1 \vee H_2$ for some graph $H_1$ on $l$ vertices and some graph $H_2$ on $n-l$ vertices. Therefore, we may apply the first part of Theorem \ref{thm: max degrees is G[R]} to  obtain an upper bound for $\spex(n, T)$. Since, $T$ is not contained in $G$, it must be the case that $\Delta(H_2) \le \delta - 1$. Thus, Lemma~\ref{upper bounds on spectral radius for joins with max degrees} implies that $\spex(n, T) = \lambda(G) \le f(\delta -1, n)$, and this upper bound is realized only when $H_1 = K_l$ and $H_2$ is a $\delta - 1$ regular graph.

		To observe the lower bound, we will use second part of Theorem \ref{thm: max degrees is G[R]}. For any graph $H_2'$ on $n-l$ vertices,
		satisfying $\Delta(H_2') \le \delta -1$ and at most one vertex of $H_2'$ has degree $\delta -1$, $G' = K_l \vee H_2'$ does not contain any copy of $T$. In particular, this is the case if $H_2'$ is a $\delta - 2$ regular graph, or a graph with one vertex of degree $\delta -1$ and every other vertex of degree $\delta -2$. At least one of these is possible depending on the parity of $n-l$ and $\delta - 2$. Therefore, it must be the case that $f(\delta - 2, n) \le \lambda(K_l \vee H_2') = \lambda(G') \le \lambda(G) = \spex(n, T)$, proving the lower bound. 
	\end{proof}
	
	\subsection{Proof of Theorem~\ref{thm: T-spex tighter bounds version}}

	In this section we will consider trees $T \in \tmld$  which can be embedded in $\overline{K_l} \vee mS_{\delta}$ (which include trees that satisfy Hypothesis~\ref{hypothesis} and those trees with $t<l$).
	So far we know using Theorems~\ref{thm: T - SPEX contains k_l,n-l} and \ref{thm: max degrees is G[R]} that for any tree $T$ in $\tmld$ there exists some $N \in \mathbb{N}$, as given in Equation~(\ref{threshold for n}), such that for all $n > N$, any $G \in \spex(n, T)$ has the form $G = H_1 \vee H_2$, where $H_1$ is a graph on $l$ vertices and $H_2$ is a graph on $n-l$ vertices with $\Delta(H_2) \le \delta - 1$. 
	Before we prove Theorem~\ref{thm: T-spex tighter bounds version}, we 
	will prove some supporting lemmas that are applied to estimate the Perron entries of a spectral extremal graph. In fact, the lemmas are more general and give estimations for any graphs that have structure similar to that of the spectral extremal graphs, that is they can be obtained as the join of a graph on $l$ another graph on $n-l$ vertices, where the graph on $n-l$ vertices has maximum degree at most $d$ for some constant $d$.
	
	\begin{lem}
		\label{lem: max perron entries in R}
		
		Let $H = H_1 \vee H_2$ where $H_1$ is a graph on $l$ vertices and $H_2$ is a graph on $n-l$ vertices with $\Delta(H_2) = d$. Let $\mathrm{y}$ be a Perron vector of $H$ with $\supnorm{y} = 1$, then \[M = \max\{\mathrm{y}_v \mid v \in V(H_2)\} \le \frac{l}{\lambda(H) - d} \le \frac{l}{\sqrt{l(n-l)} - d}.\]
		
		Consequently, if  $T \in  \tmld$ ,  there exists some $N \in \mathbb{N}$, as given in (\ref{threshold for n}), such that for all $n > N$, any $G \in \SPEX(n, T)$ is of the form $G= H_1 \vee H_2$, where $H_1$ is a graph on $l$ vertices and $H_2$ is a graph on $n-l$ vertices, and $\x$ is the Perron vector of $G$ scaled so that $||\x||_{\infty} = 1$, we have
		\[\max\{\x_v \mid v \in V(H_2)\} \le \frac{l}{\lambda(G) - d} \le \frac{l}{\sqrt{l(n-l)} + 1 -\delta}.\]
	\end{lem}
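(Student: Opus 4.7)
The plan is to apply the eigenvalue equation at a maximum-weight vertex in $V(H_2)$. Let $v^* \in V(H_2)$ be a vertex realizing the maximum, i.e., $\mathrm{y}_{v^*} = M$. Since $H = H_1 \vee H_2$, the vertex $v^*$ is adjacent to all $l$ vertices of $H_1$, and it has at most $d$ neighbors inside $V(H_2)$ since $\Delta(H_2) = d$. Using the eigenvector equation $\lambda(H)\mathrm{y}_{v^*} = \sum_{u \sim v^*} \mathrm{y}_u$ and splitting the sum based on whether $u$ is in $V(H_1)$ or $V(H_2)$, I will bound each $\mathrm{y}_u$ with $u \in V(H_1)$ by $\supnorm{y} = 1$ and each $\mathrm{y}_u$ with $u \in V(H_2)$ by $M$, obtaining
\[
\lambda(H)\, M \;\le\; l \cdot 1 + d \cdot M.
\]
Rearranging gives $M(\lambda(H) - d) \le l$, which yields the first inequality $M \le l/(\lambda(H)-d)$ provided $\lambda(H) > d$.

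For the second inequality I will observe that $H = H_1 \vee H_2$ contains $K_{l,n-l}$ as a spanning subgraph (regardless of the edges internal to $H_1$ and $H_2$, the join contributes all bipartite edges between $V(H_1)$ and $V(H_2)$). By edge-monotonicity of the spectral radius, $\lambda(H) \ge \lambda(K_{l,n-l}) = \sqrt{l(n-l)}$, which both justifies $\lambda(H) > d$ (for $n$ large enough that $\sqrt{l(n-l)} > d$) and substitutes into the bound.

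For the consequence, I will assemble the pieces already proved. Theorem~\ref{thm: T - SPEX contains k_l,n-l} guarantees that for $n > N$, any $G \in \SPEX(n,T)$ contains $K_{l,n-l}$, which forces the decomposition $G = H_1 \vee H_2$ with $|V(H_1)|=l$ and $|V(H_2)|=n-l$. Theorem~\ref{thm: bounds for T - spex} then gives $\Delta(H_2) \le \delta-1$. Applying the first part of the present lemma with $d = \delta-1$ and with $\mathrm{y} = \mathrm{x}$ immediately gives
\[
\max\{\x_v : v \in V(H_2)\} \;\le\; \frac{l}{\lambda(G) - (\delta-1)} \;=\; \frac{l}{\lambda(G) + 1 - \delta} \;\le\; \frac{l}{\sqrt{l(n-l)} + 1 - \delta}.
\]

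There is no serious obstacle: the argument is a one-line application of the eigenequation combined with the subgraph bound $\lambda(H) \ge \sqrt{l(n-l)}$. The only thing to check is that $n$ is large enough that $\sqrt{l(n-l)} > d$ (equivalently $\sqrt{l(n-l)} > \delta - 1$ for the consequence), which is easily absorbed into the threshold $N$ from Theorem~\ref{thm: T - SPEX contains k_l,n-l}.
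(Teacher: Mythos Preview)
Your proposal is correct and follows essentially the same approach as the paper: apply the eigenvector equation at a vertex of $V(H_2)$ with maximal Perron entry, bound the $H_1$-contributions by $1$ and the $H_2$-contributions by $M$, then use $K_{l,n-l}\subset H$ to get $\lambda(H)\ge\sqrt{l(n-l)}$. The consequence is likewise deduced from the structural Theorems~\ref{thm: T - SPEX contains k_l,n-l} and~\ref{thm: bounds for T - spex} exactly as you describe.
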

	
	\begin{proof}
		Let $M = \max\{y_v \mid v \in V(H_2)\}$. Say $w \in V(H_2)$ is a vertex such that $y_{w} = M$. Then $\lambda(H) M =\lambda(H)y_w = \sum_{u \in V(H_1)} y_u + \sum_{u\sim w, u \in V(H_2)} y_u \le l + d M$. Since $K_{l, n-l} \subset H$, we have $\lambda(H) \ge \sqrt{l(n-l)}$ and therefore $M \le \frac{l}{\lambda(H) - d} \le \frac{l}{\sqrt{l(n-l)} -d}$. 
		
		The second statement follows from Theorem~\ref{thm: T - SPEX contains k_l,n-l}.
	\end{proof}
	
	\begin{lem}
		\label{lem: difference of perron entries from R}
		Let $H = H_1 \vee H_2$ where $H_1$ is a graph on $l$ vertices and $H_2$ is a graph on $n-l$ vertices with $\Delta(H_2) = d$. Let $\mathrm{y}$ be a Perron vector of $H$ with $\supnorm{y} = 1$. Let $M := \mathrm{max}\{\mathrm{y}_v : v \in R\}$ and $m := \mathrm{min}\{\mathrm{y}_v : v \in R\}$. Then,
		$M - m \le \frac{d M}{\lambda(H)} \le \frac{d M}{\sqrt{l(n-l)}}$.
		Consequently, 
		\[m \ge M - \frac{dM}{\lambda(H)} \ge M\left(1 - \frac{d}{\sqrt{l(n-l)}}\right).\]
	\end{lem}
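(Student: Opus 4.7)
The plan is to exploit the fact that $H = H_1 \vee H_2$ is a join, so every vertex of $H_2$ has the same set of neighbors in $H_1$. Consequently, when we write the eigenvalue equation at two distinct vertices of $H_2$ and subtract, the contribution from $V(H_1)$ cancels out entirely, leaving only a small $H_2$-neighborhood sum on the right.

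Concretely, I would pick vertices $u, w \in V(H_2)$ (the set I take $R$ to be) achieving $\y_u = M$ and $\y_w = m$. The eigenvalue equations at $u$ and $w$ read
\[\lambda(H)\,\y_u = \sum_{v \in V(H_1)} \y_v \;+\; \sum_{\substack{v \sim u \\ v \in V(H_2)}} \y_v, \qquad \lambda(H)\,\y_w = \sum_{v \in V(H_1)} \y_v \;+\; \sum_{\substack{v \sim w \\ v \in V(H_2)}} \y_v.\]
Subtracting yields
\[\lambda(H)(M - m) = \sum_{\substack{v \sim u \\ v \in V(H_2)}} \y_v \;-\; \sum_{\substack{v \sim w \\ v \in V(H_2)}} \y_v.\]

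The second sum on the right is non-negative and the first sum has at most $d_{H_2}(u) \le \Delta(H_2) = d$ summands, each bounded above by $M$. Therefore $\lambda(H)(M-m) \le dM$, which gives the first inequality $M - m \le \frac{dM}{\lambda(H)}$. For the second inequality, I invoke the fact that $K_{l, n-l} \subseteq H$ (since $H_1 \vee H_2$ contains the complete bipartite graph on the two parts), hence by monotonicity of the spectral radius under taking subgraphs, $\lambda(H) \ge \lambda(K_{l, n-l}) = \sqrt{l(n-l)}$, and substitution gives $\frac{dM}{\lambda(H)} \le \frac{dM}{\sqrt{l(n-l)}}$.

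The "Consequently" statement follows immediately by rearranging $M - m \le \frac{dM}{\lambda(H)}$ to $m \ge M - \frac{dM}{\lambda(H)} \ge M\!\left(1 - \frac{d}{\sqrt{l(n-l)}}\right)$. There is no real obstacle in this proof: the cancellation of the $H_1$-contribution afforded by the join structure does all the work, and the rest is a one-line estimate using $\Delta(H_2) \le d$ together with the standard lower bound on $\lambda(H)$ from $K_{l, n-l} \subseteq H$.
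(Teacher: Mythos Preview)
Your proposal is correct and follows essentially the same approach as the paper: write the eigenvalue equation at the two vertices of $V(H_2)$ realizing $M$ and $m$, subtract so that the $V(H_1)$-contributions cancel, and bound the remaining $H_2$-neighborhood difference by $dM$, then invoke $\lambda(H) \ge \lambda(K_{l,n-l}) = \sqrt{l(n-l)}$.
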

	
	\begin{proof}
		Let $\lambda= \lambda(H) \ge \sqrt{l(n-l)}$. Let $u, w \in V(H_2)$ such that $\mathrm{x}_u = m$ and $\mathrm{x}_w = M$, respectively. Then,
		
		\begin{equation*}
			\begin{aligned}
				\lambda(M-m) = \lambda(\mathrm{x}_w - \mathrm{x}_u) &= \sum_{\substack{v \sim w\\ v\in L}}\mathrm{x}_v + \sum_{\substack{v \sim w\\ v\in R}}\mathrm{x}_v - 
				\sum_{\substack{v \sim u\\ v\in L}}\mathrm{x}_v - \sum_{\substack{v \sim u\\ v\in R}}\mathrm{x}_v\\ 
				&= \sum_{\substack{v \sim w\\ v\in R}}\mathrm{x}_v - \sum_{\substack{v \sim u\\ v\in R}}\mathrm{x}_v \\
				&\le d M.
			\end{aligned}
		\end{equation*}
		Thus, $M-m \le \frac{d M}{\lambda} \le \frac{d M}{\sqrt{l(n-l)}}$.
		
		It follows from here that $m \ge M - \frac{dM}{\lambda(H)} = M\left(1 - \frac{d}{\lambda(H)}\right) \ge M\left(1 - \frac{d}{\sqrt{l(n-l)}}\right)$.
	\end{proof}

	Next we give an upper bound for the spectral radius of any graph with looks like the join of a graph on $l$ vertices with another graph on $n-l$ vertices, where the graph on $n-l$ vertices has maximum degree at most $d$, and at most $c$ vertices of the graph on $n-l$ vertices have degree $d$.
	\begin{lem}
		\label{lem: upper bound on spectral radius for Kl join H2}
		
		Let $H = K_l \vee H_2$ where $V(H_2) = n-l$, $\Delta(H_2) = d$ and there are at most $c$ vertices in $H_2$ with degree $d$. Then $\lambda(H) \le f(d-1,n) + \frac{2c}{n}$.
	\end{lem}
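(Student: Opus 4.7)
[Proof proposal for Lemma~\ref{lem: upper bound on spectral radius for Kl join H2}]
My plan is to combine the eigenvalue equations for vertices in $K_l$ and in $H_2$ into a single quadratic inequality in $\lambda(H)$, and then interpret that inequality as saying ``$\lambda(H)$ is only a little bit larger than the root of the quadratic that defines $f(d-1,n)$.''

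First, let $\mathbf{y}$ be the Perron vector of $H$ scaled so that $\|\mathbf{y}\|_\infty = 1$. Because every vertex of $L := V(K_l)$ is adjacent to every other vertex of $L$ and to every vertex of $R := V(H_2)$, the group of permutations of $L$ acts as an automorphism group of $H$, so by uniqueness of the Perron vector up to scaling all vertices in $L$ share a common Perron weight $a$; and since $L$-vertices have the largest degree one can check that $a = 1$. Write $S := \sum_{v \in R} y_v$ and $M := \max_{v \in R} y_v$. The eigenvalue equation at a vertex of $L$ gives $\lambda = (l-1) + S$, so $S = \lambda - (l-1)$.

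Next I would sum the eigenvalue equation over all vertices of $R$:
\[
\lambda S = l(n-l) + \sum_{w \in R} y_w\, d_{H_2}(w).
\]
Because $d_{H_2}(w) \le d$ with equality for at most $c$ vertices, the right-most sum is at most $(d-1)S + cM$. Substituting $S = \lambda - (l-1)$ then yields
\[
\bigl(\lambda - (d-1)\bigr)\bigl(\lambda - (l-1)\bigr) - l(n-l) \le cM.
\]
The left-hand side is exactly the quadratic $p(\lambda)$ whose larger root is $f(d-1,n)$, so $p$ is nonnegative and increasing on $[f(d-1,n), \infty)$.

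If $\lambda \le f(d-1,n)$ we are already done. Otherwise, by the mean value theorem applied to $p$,
\[
\lambda - f(d-1,n) \le \frac{cM}{p'(f(d-1,n))} = \frac{cM}{2f(d-1,n) - (l + d - 2)}.
\]
I would then plug in the bound $M \le l/(\sqrt{l(n-l)} - d)$ from Lemma~\ref{lem: max perron entries in R} and use $f(d-1,n) \ge \sqrt{l(n-l)}$, which after an elementary estimate (valid for all $n$ sufficiently large compared to $l$ and $d$) gives a right-hand side bounded by $2c/n$. The only real subtlety is checking that this clean inequality $\lambda - f(d-1,n) \le 2c/n$ holds uniformly and not merely asymptotically; this should reduce to a one-variable monotonicity check on the ratio $M / (2f(d-1,n) - (l+d-2))$, and I expect it to be the main technical step though not a conceptual obstacle.
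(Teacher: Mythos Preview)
Your approach is correct and genuinely different from the paper's. The paper deletes at most $c$ edges from $H_2$ to obtain $H_2'$ with $\Delta(H_2')\le d-1$, then uses the Rayleigh quotient with the Perron vector $\mathbf y$ of $H$: since $\mathbf y^TA(K_l\vee H_2')\mathbf y/\mathbf y^T\mathbf y\le f(d-1,n)$, the excess $\lambda(H)-f(d-1,n)$ is bounded by $2cM^2/\mathbf y^T\mathbf y$, and this is controlled using both Lemma~\ref{lem: max perron entries in R} (for $M$) and Lemma~\ref{lem: difference of perron entries from R} (for the lower bound on $m$). Your argument instead sums eigenvalue equations to obtain directly $p_{d-1}(\lambda)\le cM$ and finishes with the mean value theorem; this avoids the Rayleigh quotient entirely, never needs the estimate on $m$, and in fact yields the asymptotically sharper constant $c/(2n)$. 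Both proofs tacitly need $n$ large relative to $l,d$ --- in your case for $a=1$ and for the final ratio estimate, in the paper's case for the denominator bound $\mathbf y^T\mathbf y\ge n$ --- which is harmless for the application. One small point: the claim $a=1$ does not follow from ``largest degree'' alone; it follows because if the maximum entry were in $R$ then the eigenvalue equation there would force $\lambda\le l+d<\sqrt{l(n-l)}$, a contradiction for large $n$. Alternatively, you can carry $a$ through: one gets $p_{d-1}(\lambda)\cdot a\le cM$, and combining $M\le 1$ with $a\ge(\lambda-d)/l$ recovers the same final estimate without ever asserting $a=1$.
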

	\begin{proof}
		By our assumptions, $\Delta(H_2) = d$, and there are $c$ vertices of degree $d$ in $H_2$, and therefore there exists a set $E'$ of at most $c$ edges in $E(H_2)$ that can be deleted to obtain a graph $ H_2'$ with $\Delta(H_2') \le d-1$. 
		Consequently, $\lambda(K_l \vee H_2') \le f(d-1)$.
		Let $\mathrm{y}$ be a Perron vector of $H$ with $\supnorm{y} = 1$ and set $M := \mathrm{max}\{\mathrm{y}_v : v \in R\}$ and $m := \mathrm{min}\{\mathrm{y}_v : v \in R\}$.
		Since $E(H) = E(K_l \vee H_2') \cup E'$, we know that \[ \lambda(H) = \max_{\x \ne 0} \frac{\sum_{uv \in E(H) \setminus E'} 2 \x_u \x_v + \sum_{uv \in E'} 2 \x_u \x_v}{\sum_{u \in V(H) \setminus V(H_2)}x_u^2 + \sum_{u \in V(H_2)}x_u^2}.\]
		Therefore, by Lemmas \ref{lem: max perron entries in R} and \ref{lem: difference of perron entries from R}, we can see that
		
		\begin{equation*}
			\begin{aligned}
				\lambda(H) \le f(d-1,n) + 2c\frac{M^2}{l + (n-l)m^2} &\le f(d-1,n) + 2c\frac{M^2}{l + (n-l)M^2\left(1 - \frac{d}{\lambda(H)}\right)^2}\\
				&= f(d-1,n) + 2c\frac{M^2}{M^2\left(\frac{l}{M^2} + (n-l)\left(1 - \frac{d}{\lambda(H)}\right)^2\right)}\\ 
				&\le f(d-1,n) + 2c \frac{1}{\frac{l}{\left(\frac{l}{\lambda(H) - d}\right)^2} + (n-l)\left(1 - \frac{d}{\lambda(H)}\right)^2}\\
				&\le f(d-1,n) + 2c \frac{1}{\frac{\left(\lambda(H) - d\right)^2}{l} + (n-l)\left(\frac{\lambda(H) - d}{\lambda(H)}\right)^2}\\
				&\le f(d-1,n) +  \frac{2c}{n}.
			\end{aligned}
		\end{equation*}
	\end{proof}
	
	We have for $T \in\tmld$ that is embeddable in $\overline{K_l} \vee m S_{\delta}$, any  $G \in \SPEX(n, T)$ for $n > N$ must have $G \cong H_1 \vee H_2$ where $H_1$ is a graph on $l$ vertices, and $H_2$ is $m S_{\delta}$-free. We will now prove a lemma which will give us an upper bound on the number of vertices of degree $\delta - 1$ in $H_2$.

	\begin{lem}
		\label{lem: max number of vertices of degree d in H_2}
		Let $H$ be a $k S_{d+1}$-free graph on $n -l$ vertices with $\Delta(H) \le d$. Then $H$ has at most $(k-1)(d^2 + 1)$ vertices of degree $d$. 
	\end{lem}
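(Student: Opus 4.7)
My plan is a straightforward greedy/counting argument. Since $S_{d+1} = K_{1,d}$ and $\Delta(H) \le d$, every vertex $v$ of degree exactly $d$ together with its neighborhood $N(v)$ induces a subgraph containing a copy of $S_{d+1}$ with center $v$. The task therefore reduces to showing that if $H$ contains more than $(k-1)(d^2+1)$ vertices of degree $d$, then $k$ of these centers can be chosen so that the corresponding closed neighborhoods are pairwise disjoint, producing $k$ vertex-disjoint copies of $S_{d+1}$ and contradicting the hypothesis.

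The plan is to proceed greedily. Suppose towards a contradiction that $H$ has at least $(k-1)(d^2+1)+1$ vertices of degree $d$. Iteratively, for $i = 1, 2, \ldots, k$, I would pick a degree-$d$ vertex $v_i$ whose closed neighborhood $N[v_i]$ is disjoint from $\bigcup_{j<i} N[v_j]$, and take the star $S_i$ to be the one centered at $v_i$ with leaf set $N(v_i)$. The whole argument then reduces to verifying that such a $v_i$ is always available until $i = k$.

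The key estimate is that a single already-chosen star $S_j$ can ``block'' at most $d^2+1$ degree-$d$ vertices from serving as the next center. Indeed, a degree-$d$ vertex $v$ is blocked by $S_j$ precisely when $N[v] \cap N[v_j] \ne \emptyset$, which means $v$ lies in $N[v_j]$ or has a neighbor in $N(v_j)$; in graph-distance terms, $v$ lies within distance two of $v_j$. Since every vertex of $H$ has degree at most $d$, I get
\[
|N_{\le 2}(v_j)| \;\le\; 1 + d + d(d-1) \;=\; d^2+1.
\]
Thus after having chosen $j$ stars, the number of blocked degree-$d$ vertices is at most $j(d^2+1)$. At step $i=k$, this is at most $(k-1)(d^2+1)$, which is strictly less than the assumed number of degree-$d$ vertices, so an unblocked center $v_k$ exists.

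I do not anticipate a serious obstacle here: the argument is purely combinatorial, and the constant $d^2+1$ comes out cleanly from the two-step neighborhood bound. The one point to be slightly careful about is that when counting $N_{\le 2}(v_j)$ I use the full degree bound on vertices of $N(v_j)$ (each has at most $d-1$ neighbors other than $v_j$), giving the $d(d-1)$ contribution rather than $d^2$; this is exactly what makes the final count $d^2+1$ rather than $d^2 + d + 1$.
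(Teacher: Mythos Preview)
Your proposal is correct and takes essentially the same approach as the paper: both arguments hinge on the observation that any degree-$d$ vertex not yet ``covered'' by a maximal family of vertex-disjoint copies of $S_{d+1}$ must lie within distance two of some center, and that each center's $2$-ball has at most $1 + d + d(d-1) = d^2+1$ vertices. The paper phrases this by fixing a maximum family of $k' \le k-1$ disjoint stars and counting, while you phrase it as a greedy construction by contradiction; the content is the same.
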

	
	\begin{proof}
		
		Suppose $k' \le k - 1$ is the largest number of disjoint stars $S_{d+1}$ contained in $H$. Let $c_1, c_2, \ldots, c_{k'}$ be the centers of these stars. Then any vertex of degree $d$ in $H$ must be at distance at most $2$ from one of the $c_i$, $1 \le i \le k'$. For any fixed $i$, $1 \le i \le k'$, there are $d$ vertices adjacent to each $c_i$, and each neighbor of $c_i$ is adjacent to at most $d -1$ other vertices apart from $c_i$. Hence, there are at most $1 + d + d(d-1) = d^2 + 1$ vertices at distance no more than $2$ from $c_i$. Consequently, there are at most $k'(d^2 + 1)$ vertices of degree $d$ in $H$. 
	\end{proof}

	We are now ready to complete the proof of Theorem \ref{thm: T-spex tighter bounds version}.

	\begin{proof}[Proof of Theorem \ref{thm: T-spex tighter bounds version}]
		Let $T$ be a fixed tree in $\tmld$ that is embeddable in $\overline{K_l} \vee m S_{\delta}$ . By Theorems~\ref{thm: T - SPEX contains k_l,n-l} and \ref{thm: max degrees is G[R]}, we know that there exists an $N \in \mathbb{N}$, such that for all $n > N$, any $G \in \SPEX(n, T)$ is of the form $G = H_1 \vee H_2$ where $H_1$ is a graph on $l$ vertices and $H_2$ is a graph on $n-l$ vertices with $\delta - 2 \le \Delta(H_2) \le \delta - 1$. 
		Further, by Theorem~\ref{thm: tree embedding} and Lemma~\ref{lem: max number of vertices of degree d in H_2} we know that there are at most $(m-1)((\delta- 1)^2 +1)$ vertices of degree $\delta - 1$ in $H_2$. Thus, applying Lemma~\ref{lem: upper bound on spectral radius for Kl join H2} with $d= \delta - 1$ and $c = (m-1)((\delta- 1)^2 +1)$ we have that, $\lambda(G) = \spex(n, T) \le f(\delta - 2, n) + \frac{2 (m-1) ((\delta - 1)^2 + 1)}{n}$.
		
		Also, the lower bound $f(\delta-2, n) \le \spex(n, T)$ follows from Theorem~\ref{thm: bounds for T - spex}.
		
		Finally, we end by proving that $H_1$ is isomorphic to $K_l$.
		
		Assume to the contrary that $H_1 \not\cong K_l$. Then there must exist at least one non-adjacent pair of vertices in $H_1$. Say $u_0 v_0$ is one such non-adjacent pair. We know  $H_2$ has less than $(m-1)\dmoore$ vertices of degree $\delta - 1$ and therefore we can delete fewer than $(m-1)\dmoore$ number of edges to obtain a subgraph with maximum degree $\delta -2$.
		Let $H_2'$ be the graph obtained from $H_2$ after deleting as few edges as possible from $H_2$ to obtain a graph with maximum degree $\delta - 2$. Let $G' = K_l \vee H_2'$. Then, by Theorem~\ref{thm: max degrees is G[R]}, $G'$ contains no copies of $T$. Further,  by Lemmas~\ref{lem: perron for L} and \ref{lem: max perron entries in R}
		\begin{equation*}
			\begin{aligned}
				\lambda(G') - \lambda(G) &\ge \frac{\x^T\left(A(G') - A(G)\right)\x}{\x^T \x} \ge 2\frac{\x_{u_0} \x_{v_0} - \sum_{uv \in E(H_2) \setminus E(H_2')} \x_u \x_v}{\x^T \x} \\
				&\ge 2\frac{(1-\frac{1}{16l^2 m})^2 - m\dmoore \left(\frac{l}{\sqrt{l(n-l)} + 1 -\delta}\right)^2}{\x^T \x}\\
				& >0,
			\end{aligned}
		\end{equation*}
		for $n \ge 4\delta^2 m^2 l^2$ which is true $n>N$ as given in (\ref{threshold for n}).
		
		Thus, $\lambda(G') > \lambda(G)$, which contradicts the fact that $G \in \SPEX(n, T)$. Hence, $H_1$ must be isomorphic to $K_l$.
	\end{proof}
	
	Note that improving the upper bound on the number of vertices of degree $\delta - 1$ in $H_2$ will improve the upper bound on $\lambda(G) = \spex(n, T)$. As mentioned earlier, structure and examples of trees that are embeddable in $\overline{K_l} \vee m S_{\delta}$, including trees that satisfy $t<l$ or more generally Hypothesis~\ref{hypothesis}, are discussed in Section~\ref{sec: ex for embedding}. 
	
	\vspace{0.5cm}
	\textbf{Acknowledgements:} We thank  Michael Tait for his advice during the preparation of the paper. We also thank the anonymous referee for their helpful suggestions.

\end{document}